\newtheorem{theorem}{Theorem}[section]
\newtheorem{fact}[theorem]{Fact}
\newtheorem{lemma}[theorem]{Lemma}
\newtheorem{corollary}[theorem]{Corollary}
\newtheorem{proposition}[theorem]{Proposition}
\newtheorem{Claim}[theorem]{Claim}
\newtheorem{alphatheorem}{Theorem}
\theoremstyle{definition}
\newtheorem{definition}[theorem]{Definition}
\newtheorem{example}[theorem]{Example}
\newtheorem{remark}[theorem]{Remark}
\newtheorem*{remark*}{Remark}
\newcommand{\abar}{\bar{a}}
\newcommand{\bbar}{\bar{b}}
\newcommand{\cbar}{\bar{c}}
\newcommand{\dbar}{\bar{d}}
\newcommand{\xbar}{\bar{x}}
\newcommand{\ybar}{\bar{y}}
\newcommand{\zbar}{\bar{z}}
\newcommand{\nv}{\text{-}}
\def\seq{\subseteq}
\def\Def{\operatorname{Def}}
\def\diam{\operatorname{diam}}
\def\str{\operatorname{str}}
\def\psd{\operatorname{psd}}
\def\err{\operatorname{err}}
\def\supp{\operatorname{supp}}
\def\CB{\operatorname{CB}}
\def\CBm{\operatorname{CBm}}
\def\Th{\operatorname{Th}}
\def\N{\mathbb{N}}
\def\Z{\mathbb{Z}}
\def\Q{\mathbb{Q}}
\def\R{\mathbb{R}}
\def\cC{\mathcal{C}}
\def\cL{\mathcal{L}}
\def\cU{\mathcal{U}}
\def\cA{\mathcal{A}}
\def\cB{\mathcal{B}}
\def\cP{\mathcal{P}}
\def\cF{\mathcal{F}}
\def\cL{\mathcal{L}}
\newcommand{\mand}{\makebox[.4in]{and}}
\def\lf{\mathfrak{f}}
\renewcommand{\phi}{\varphi}
\renewcommand{\emptyset}{\varnothing}
\renewcommand{\epsilon}{\varepsilon}
\newcommand{\inv}{^{\text{-}1}}
\def\Ind{\setbox0=\hbox{$x$}\kern\wd0\hbox to 0pt{\hss$\mid$\hss}
\lower.9\ht0\hbox to 0pt{\hss$\smile$\hss}\kern\wd0}
\def\Notind{\setbox0=\hbox{$x$}\kern\wd0\hbox to 0pt{\mathchardef
\nn=12854\hss$\nn$\kern1.4\wd0\hss}\hbox to
0pt{\hss$\mid$\hss}\lower.9\ht0 \hbox to 0pt{\hss$\smile$\hss}\kern\wd0}
\newcommand{\dotminus}{ 
\!\!\buildrel\textstyle~.\over{\hbox{ 
\vrule height3pt depth0pt width0pt}{\smash-} 
}}
\title{Continuous stable regularity}
\date{December 7, 2023}
\author[N. Chavarria]{Nicolas Chavarria}
\address{Department of Mathematics\\
University of Notre Dame\\
Notre Dame IN 46656\\
 USA}
\email{nchavarr@nd.edu}
\author[G. Conant]{Gabriel Conant}
\address{Department of Mathematics\\
The Ohio State University\\
Columbus, OH, 43210, USA}
\email{conant.38@osu.edu}
\author[A. Pillay]{Anand Pillay}
\address{Department of Mathematics\\
University of Notre Dame\\
Notre Dame IN 46656\\
 USA}
\email{apillay@nd.edu}
\subjclass[2020]{03C45, 03C66, 05C75}
\begin{document}

\begin{abstract}
We prove an analytic version of the stable graph regularity lemma from \cite{MaSh}, which applies to stable functions $f\colon V\times W\to [0,1]$. Our methods involve continuous model theory and, in particular,  results on the structure of local Keisler measures for  stable continuous formulas. Along the way, we develop some basic tools around ultraproducts of metric structures and linear functionals on continuous formulas, and we also describe several concrete families of examples of stable functions.
\end{abstract}

\maketitle

\section*{Introduction}

Szemer\'{e}di's regularity lemma \cite{SzemRL} is a structure theorem for arbitrary finite graphs, and has become a fundamental tool in graph theory (see, e.g., \cite{KomSim}). In the setting of a bipartite graph $(V,W;E)$ (which will be our focus), the regularity lemma says roughly that $V$ and $W$ can be partitioned into a small number (depending only on a fixed  $\epsilon$) of sets $V_{i},W_j$  such that for all $i,j$ outside of a small number of ``irregular pairs", the graph $(V_{i}, W_{j}; E\cap(V_{i}\times W_{j}))$ is $\epsilon$-regular, meaning that sufficiently large induced subgraphs have a common edge density up to error at most $\epsilon$. (See Section \ref{sec:bipartite} for discussion of how to reconcile the setting of bipartite graphs with unpartitioned graphs $(V;E)$.) 

The regularity lemma for graphs can be recast and generalized as a decomposition theorem for  functions $f:V\times W \to [0,1]$, which is sometimes called the \emph{analytic} form of Szemer\'{e}di regularity.  See \cite{Green-Tao, LoSz, TaoSR}, for example.   In this regime, $f$ is decomposed as the sum of a ``structured" part,  a ``pseudorandom" part, and an ``error" part.  In the special case where $f$ is $\{0,1\}$-valued, we are back in the setting of graphs, and it is explained in several places how one recovers the usual statement of Szemer\'{e}di regularity from the analytic version (e.g., \cite[Lemma 2.11]{TaoSR}). See Section \ref{sec:analytic} for further discussion as well as new results in the ``stable" case.

It is rather natural to try to improve the conclusion of Szemer\'{e}di's  regularity lemma by placing  restrictions on the class of finite graphs under consideration. In their seminal paper, Malliaris and Shelah  \cite{MaSh} considered the restriction to ``$k$-stable graphs", namely graphs that omit the $k$-half graph  $([k],[k] ;\leq)$.  The improvements in the conclusions involved  better bounds (as a function of $\epsilon$), no irregular pairs, and ``$\epsilon$-homogeneity" replacing $\epsilon$-regularity.  Here by $\epsilon$-homogeneity of a bipartite graph $(V,W;E)$, we mean that either $|E| \geq (1-\epsilon)|V||W|$ or $|E|\leq \epsilon|V||W|$.

The aim of the current paper is to prove an analytic version of stable graph regularity.  We refer to this as \emph{continuous stable regularity} for various reasons to be explained later. In any case, we say that a function $f:V\times W \to [0,1]$ is  \textbf{$(k,\delta)$-stable} if there do not exist $a_{1},\ldots,a_{k}\in V$ and $b_{1},\ldots,b_{k}\in W$ such that $|f(a_i,b_j)-f(a_j,b_i)|\geq\delta$ for all $i<j$.  (This  generalizes the previous notion for graphs after possibly changing $k$; see Remark \ref{rem:stable-graphs}.) In Section \ref{sec:stable}, we will describe several families of examples of stable functions, drawing from the model-theoretic study of Hilbert spaces (see Corollary \ref{cor:hilbert}). 

Among our main results is Theorem \ref{thm:main-finite-thm}, which we quote now.

\begin{alphatheorem}\label{prethm:main-finite-thm}
Let $V$ and $W$ be finite sets, and suppose $f\colon V\times W\to [0,1]$ is a $(k,\delta)$-stable function. Then for any $\epsilon>0$ and any ``decay" function $\sigma\colon \N\to (0,1)$,  there are partitions $V=V_0\cup V_1\cup\ldots\cup V_m$ and $W=W_0\cup W_1\cup\ldots\cup W_n$, with $m,n\leq O_{k,\delta,\epsilon,\sigma}(1)$, satisfying the following properties.
\begin{enumerate}[\hspace{5pt}$\ast$]
\item For all $(i,j)\in [m]\times [n]$, the pair $(V_i,W_j)$ is $(5\delta+\epsilon;\sigma(mn))$-homogeneous.
\item $|V_0|\leq\epsilon|V_1|$ and $|W_0|\leq \epsilon|W_1|$.
\end{enumerate}
\end{alphatheorem}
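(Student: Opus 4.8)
The plan is to transfer the problem to a single infinite ultraproduct, apply the structure theory of stable continuous formulas and their local Keisler measures there, and descend the resulting partition to the finite level. Suppose the conclusion fails: fix $k$, $\delta$, $\epsilon$, and a decay function $\sigma$ so that for every $N\in\N$ there is a finite $(k,\delta)$-stable $f_N\colon V_N\times W_N\to[0,1]$ admitting no pair of partitions as in the statement with $mn\le N$. Fix a nonprincipal ultrafilter $\cU$ on $\N$ and form the continuous-structure ultraproduct $(V,W,f)=\prod_{N\to\cU}(V_N,W_N,f_N)$; write $\phi(x,y)$ for the induced $[0,1]$-valued predicate $f$, which is again $(k,\delta)$-stable since the defining condition is closed. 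The normalized counting measures on the $V_N$ and $W_N$ induce Loeb (Keisler) measures $\mu$ on the $x$-sort and $\nu$ on the $y$-sort. It then suffices to produce in $(V,W)$ a pair of $\mu$- and $\nu$-measurable partitions with finitely many classes, satisfying the size condition on the exceptional classes and a homogeneity estimate uniform over sub-blocks: by the construction of the Loeb measure such a partition is, modulo null sets, an ultraproduct of partitions of the $(V_N,W_N)$, and since Loeb measure agrees with normalized cardinality up to infinitesimals it descends to genuinely good partitions of $f_N$ for $\cU$-many $N$; taking such an $N$ at least as large as the (finite) number of classes contradicts the choice of the $f_N$ and shows a posteriori that this number is $O_{k,\delta,\epsilon,\sigma}(1)$.

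The heart of the matter is the structure theory of local Keisler measures for stable continuous formulas, the technical core of the paper. Were $\phi$ genuinely stable, $\mu$ and $\nu$ would be $\phi$-definable (equivalently, stable functionals in the sense of Ben Yaacov), $\mu$ would be \emph{finitely approximated in measure} with respect to $\phi$, and the family $\{\phi(\cdot,b):b\in W\}$ would be totally bounded in $L^2(\mu)$; one would then obtain, for every $\rho>0$, $\mu$- and $\nu$-measurable partitions $V=\bigcup_i V_i$, $W=\bigcup_j W_j$ into $O_{k,\rho}(1)$ classes each such that on every $V_i\times W_j$ there is a constant $c_{ij}$ with $\|\phi-c_{ij}\|_{L^2(V_i\times W_j)}\le\rho$ (with respect to the uniform product measure). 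Concretely: approximate $\mu$ by a finite average of point masses at $\phi$-types, partition the $x$-sort accordingly so that $\phi(\cdot,b)$ depends on $b$ through a $\phi$-definable function up to small $L^2(\mu)$-error, and then partition the $y$-sort by the values of those functions. Since $\phi$ is only $(k,\delta)$-stable, all of this must be run in an approximate form: a Ramsey argument shows every $\phi$-indiscernible sequence is $\delta$-symmetric in $\phi$, and the measure-theoretic statements then hold with an additive $O(\delta)$ slack. Tracking that slack — it is paid on each side of $\phi$ and once more when comparing a block to its sub-blocks — is precisely what replaces $\rho$ by $5\delta+\rho$ throughout, and is the source of the constant $5\delta$ in the statement.

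From such a partition at a level $\rho$ to be chosen last: discard the classes of $\mu$- or $\nu$-measure below a small threshold $t$ into exceptional classes $V_0$ and $W_0$, relabelling so that $V_1$ and $W_1$ carry the largest measure; since the number of classes is bounded, taking $t$ small relative to it (and using Loeb $\approx$ cardinality) yields $|V_0|\le\epsilon|V_1|$ and $|W_0|\le\epsilon|W_1|$. On a surviving $V_i\times W_j$ one has $\|\phi-c_{ij}\|_{L^2(V_i\times W_j)}\le 5\delta+\rho$, hence by Chebyshev $|\phi(a,b)-c_{ij}|\le 5\delta+\epsilon$ for all $(a,b)$ outside a set of relative measure $O((5\delta+\rho)^2/\epsilon^2)$ (and, in the same way, the average of $\phi$ over any sufficiently large sub-block is close to $c_{ij}$, whichever form of homogeneity is used); this bound is uniform over sub-blocks, as the descent requires. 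The one genuine obstacle — the step I expect to demand the real work, apart from developing the approximate stable measure theory itself — is the circular interaction between scale and number of classes: making the exceptional fraction drop below $\sigma(mn)$ forces $\rho$ small relative to $\sigma(mn)$, whereas the number of classes grows as $\rho$ shrinks. This is exactly the phenomenon behind the functional form of Szemer\'edi's regularity lemma due to Lov\'asz--Szegedy, and I would resolve it the same way: iterate, refining at each stage every product class that is not yet $(5\delta+\epsilon;\sigma(mn))$-homogeneous, with $\rho$ chosen there in terms of the current class-count and $\sigma$; an energy-increment argument — the bounded quantity $\sum_{i,j}c_{ij}^2\mu(V_i)\nu(W_j)$ strictly increases whenever a non-homogeneous class is refined — forces the iteration to terminate after finitely many steps, with the stable structure theory keeping the per-step growth of the class-count bounded, and the terminal partition, descended through the ultraproduct, contradicts the choice of the $f_N$.
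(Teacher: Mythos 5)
Your overall frame (compactness via an ultraproduct of counterexamples, Loeb/pseudofinite measures, structure theory of local Keisler measures for a $\delta$-stable formula, then descent) is the same as the paper's, but the core of your argument has a genuine gap: you replace the homogeneity target by an $L^2$ statement, and $L^2$ control cannot be upgraded to the homogeneity the theorem demands. If all you know on a block $V_i\times W_j$ is $\|\phi-c_{ij}\|_{L^2}\le 5\delta+\rho$, then Chebyshev bounds the bad set where $|\phi-c_{ij}|>5\delta+\epsilon$ only by $(5\delta+\rho)^2/(5\delta+\epsilon)^2$, which is not small at all when $\delta$ is not small relative to $\epsilon$, and in any case is a fixed constant that cannot be pushed below $\sigma(mn)$ without making $\rho$ (and hence, in your scheme, the partition itself) depend on $\sigma(mn)$ --- exactly the circularity you flag. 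Your proposed escape, a Lov\'asz--Szegedy-style energy-increment iteration, is the wrong mechanism: energy increment produces regularity-type (density/$L^2$) control, not the ``for all but $\sigma(mn)$-many $b$, for all but $\sigma(mn)$-many $a$, $f(a,b)\approx_{5\delta+\epsilon}r$'' statement, and failure of homogeneity at tolerance $\sigma(mn)$ does not yield a per-step energy gain bounded below independently of the current class count, so the iteration neither terminates uniformly nor converges to homogeneity; the claim that ``the stable structure theory keeps the per-step growth bounded'' is not attached to any actual mechanism. The paper's introduction warns about precisely this point: the passage from regularity to homogeneity is where the analytic-regularity toolkit stops working.

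The paper avoids the circularity in a structurally different way, and this is the idea missing from your proposal. By Theorem \ref{thm:sumtypes} (proved via finiteness of the local rank $\CB_{2\delta}$, Proposition \ref{prop:CBstable}), the Keisler measure decomposes as a countable sum of measures concentrated on pairwise disjoint closed sets of $d$-diameter at most $2\delta$; truncating this sum fixes the numbers $m,n$ \emph{as a function of $\epsilon$ alone}, before $\gamma=\tfrac12\sigma(mn)$ is ever chosen. The error $\gamma$ is then met for these \emph{same} pieces, not by refining the partition, but by shrinking explicitly open neighborhoods of the fixed closed sets $C_i$ using regularity of the measure (Corollary \ref{cor:sumtypes}(iii) and Lemma \ref{lem:upstairs-hom}); the constant $5\delta$ arises as $3\delta+2\delta$ from the $\delta$-approximate definability of types via min-max formulas (Lemma \ref{lem:UDT}) combined with the diameter bound and Proposition \ref{prop:lip}, not from your ``pay $\delta$ on each side plus once for sub-blocks'' bookkeeping. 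Separately, your descent step (``a measurable partition is, modulo null sets, an ultraproduct of partitions, and Loeb measure agrees with counting up to infinitesimals'') glosses over a real difficulty in the continuous setting: the homogeneity conditions are universally quantified over $b\in W_j$, and the ultralimit Keisler measure and the limit of counting measures only agree up to the one-sided inequalities of Lemma \ref{lem:blur}; the paper needs the expanded language with the predicates $\boldsymbol{1}^+_\psi$, $P_{\theta,r}$, $Q_{\theta,r}$ and a careful $\eta$-versus-$\lambda$ slack (Lemma \ref{lem:finite-2}) to transfer these statements. That is a secondary but nontrivial omission; the primary gap is the $L^2$/energy-increment route to homogeneity.
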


Homogeneity is defined   in  Section \ref{sec:upstairs} (see Definition \ref{def:hom}).  Roughly speaking, the pair $(V_i,W_j)$ is $(\delta;\epsilon)$-homogenous (with respect to the fixed  $f\colon V\times W\to [0,1]$), if for all  but $\epsilon|W_{j}|$-many $b\in W_{j}$, for all but $\epsilon|V_{i}|$-many $a\in V_{i}$, the value $f(a,b)$ is within $\delta$ of a fixed number $r\in [0,1]$ that depends only on $i$ and $j$ (along with a dual statement quantifying in the reverse order). In analogy to homogeneity for graphs, this yields the ``density" bound $\|f|_{V_i\times W_j}-r\|_1\leq \delta+2\epsilon$ (see Remark \ref{rem:hompairs}). The proof of Theorem \ref{prethm:main-finite-thm} also provides  definability conditions on the sets $V_i$ and $W_j$ in terms of the function $f$ (see Remark \ref{rem:complexity}). Finally, we note that the bound $O_{k,\delta,\epsilon,\sigma}(1)$  is ineffective due to the use of pseudofinite methods.

The sets $V_0$ and $W_0$ in Theorem \ref{prethm:main-finite-thm} are small ``exceptional" sets of vertices, which can be incorporated into $V_1$ and $W_1$ to obtain a total partition (with certain costs; see Remark \ref{rem:ex-vert} and Theorem \ref{thm:main-finite2}). We also prove a version of Theorem \ref{prethm:main-finite-thm} involving balanced \emph{equipartitions} (see Theorem \ref{thm:equip}).    Moreover, Theorem \ref{thm:function-version} gives a strong decomposition theorem for stable functions in the sense of ``analytic" regularity. 

We now discuss our methods, as well as other aspects of this paper (including  purely model-theoretic results).  In combinatorics, the analytic generalization of regularity  is often viewed as being very useful, but only requiring routine variations of existing arguments. So we make the important remark that this is not the case for stable regularity. This is largely because of the shift from regularity to homogeneity, which yields a much stronger description of the object in question using highly structured ingredients. The adaptation of these ingredients to the setting of  functions will require several nontrivial steps and new results.

Szemer\'{e}di's original proof of his regularity lemma was direct and finitary. 
Likewise the proof of stable graph regularity in \cite{MaSh} was at the finitary level but with tools from stability such as the $2$-rank. 
On the other hand, it is well-known that  a theorem about all finite objects of a certain kind can be obtained from a theorem about a single ``nonstandard finite" (or  \emph{pseudofinite}) object (although this method typically does not yield effective bounds, where relevant).  Such an approach to graph regularity is described in a blog post by Tao \cite{TaoNSAblog}, with origins in broader work of Elek and Szegedy \cite{ElSz}. An account can also be found in course notes of the third author \cite{Pillay-pseudofinite}.  Insofar as stable  regularity is concerned, it is very natural to use pseudofinite methods as one can plug into to the existing theory of local stability in model theory.   This approach to stable graph regularity was carried out in \cite{MaPi}, giving a structure theorem for infinite bipartite graphs $(V,W;E)$ for which the formula $E(x,y)$ is stable, and in the presence of finitely additive (Keisler) measures  on the relevant Boolean algebras of subsets of $V$ and $W$.

This will be our approach to stable regularity in the current paper.  However, because we are working with a function $f:V\times W \to [0,1]$, rather than a relation $E\subseteq V\times W$, the relevant nonstandard environment is in the realm of {\em continuous logic} where the basic formulas are real-valued rather than Boolean-valued. The change in logic is one reason for our use of the expression ``continuous stable regularity". This also fits with combinatorics where the word ``continuous" is sometimes used to indicate the passage from graph relations to real-valued functions. Finally, we note that our setting is related to recent work of Chernikov and Towsner \cite{ChTowVCk} on tame regularity for $[0,1]$-valued $(k+1)$-ary functions of bounded VC$_k$-dimension (a higher arity analogue of NIP).  

There is some work on local (formula-by-formula) stability in continuous logic which we can and will appeal to (such as \cite{BYU}); but we will also need to develop  new results concerning Keisler measures and stable formulas in the continuous environment.  This is done in Section \ref{sec:measures}. In fact, we will work in large part under the weaker assumption that the given continuous formula 
$\phi(x,y)$ is ``$\delta$-stable" for a particular $\delta > 0$ (rather than  \emph{fully stable}, i.e.,  $\delta$-stable for all $\delta > 0$; see Definition \ref{def:stable2}).
For a given model $M$, the relevant Keisler measures will be regular Borel (probability) measures on the space $S_{\phi}(M)$ of complete $\phi$-types over $M$.

In the classical first-order context, a basic theorem is that if $\phi(x,y)$ is stable then any Keisler measure $\mu$ on the 
space $S_{\phi}(M)$ is a countable weighted sum $\mu = \sum\alpha_{i}p_{i}$ of Dirac measures,  where $p_i\in S_\varphi(M)$ and $\sum\alpha_{i} = 1$. An account of this result is given  by the third author in \cite{PiDR}, drawing from earlier  work of Keisler \cite{Keis}. We prove the following analogue in the continuous setting (quoting Theorem \ref{thm:sumtypes}).

\begin{alphatheorem}\label{prethm:sumtypes}
Let $M$ be a metric structure. Suppose $\varphi(x,y)$ is $\delta$-stable,  and $\mu$ is a Keisler measure on $S_{\phi}(M)$. Then there is a countable collection $\{C_{i}\}_{i\in I}$ of pairwise disjoint closed subsets of $S_{\phi}(M)$ each of 
diameter at most $2\delta$, such that  $\mu = \sum_{i\in I}\alpha_{i}\mu_{i}$, where $\mu_{i}$ is a Keisler measure concentrating on $C_{i}$ and  $\sum_{i\in I}\alpha_{i} = 1$.  
\end{alphatheorem}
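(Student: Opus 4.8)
The plan is to mimic the classical argument for first-order stable Keisler measures (as in \cite{PiDR, Keis}), but replacing the discrete partition into $\phi$-types by a partition of $S_\phi(M)$ into pieces of diameter at most $2\delta$ coming from $\delta$-stability. The key structural input is that $\delta$-stability of $\phi(x,y)$ controls how much the $\phi$-type can ``fork'' when we pass to an elementary extension: there is no infinite sequence of instances $\phi(x,b_i)$ whose values oscillate by $\delta$ along any pair $i<j$. Concretely, first I would fix a sufficiently saturated elementary extension $\cU\succeq M$ and consider, for each real $r\in[0,1]$ and each $b$, the ``$\delta$-approximate'' conditions $\phi(x,b)\leq r$ and $\phi(x,b)\geq r$; these cut $S_\phi(M)$ into closed sets, and $\delta$-stability says the resulting family has finite ``alternation'' in a suitable sense, hence bounded (local) rank. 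I expect to define a Cantor--Bendixson-type rank (the paper reserves notation $\CB$, $\CBm$) relative to $\delta$: a type $p$ has rank $\geq n+1$ if in every neighborhood of $p$ one can split $p$ into two subsets that disagree by $\geq\delta$ on some instance and each still has rank $\geq n$. $\delta$-stability should force this rank to be bounded on all of $S_\phi(M)$ (an infinite rank would build a $\delta$-alternating array, i.e. a witness to $\delta$-instability), and finiteness of the rank is what produces a countable decomposition into small-diameter closed pieces.

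The measure-theoretic step then goes as follows. Given a Keisler measure $\mu$, which by the setup is a regular Borel probability measure on the compact space $S_\phi(M)$, I would run an induction on the bounded $\delta$-rank. At rank $0$ the space is (essentially) a finite union of clopen sets of diameter $\leq 2\delta$, and $\mu$ restricted to each is a scalar multiple of a probability measure concentrating there; that is the base case. For the inductive step, stratify $S_\phi(M)$ by rank: the set $X_n$ of types of rank exactly $n$ is a locally closed set (difference of two closed sets $X_{\geq n}\setminus X_{\geq n+1}$), and within $X_n$ the rank-$n$ points are ``isolated among rank-$n$ points'' in the sense that each has a neighborhood meeting $X_{\geq n}$ in a set of diameter $\leq 2\delta$. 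These neighborhoods give a countable (since $S_\phi(M)$ is compact metrizable, or at least Lindelöf in the relevant topology) cover of $X_n$ by relatively clopen small-diameter sets; disjointifying produces countably many pairwise disjoint Borel pieces of diameter $\leq 2\delta$ covering $X_n$. Taking the closures (or slightly shrinking to keep them closed while controlling diameter — note the $2\delta$ bound gives room) and normalizing $\mu$ restricted to each nonnull piece yields the $C_i$, $\mu_i$, and $\alpha_i=\mu(C_i)$; summing over all ranks $n$ and all pieces gives the countable decomposition, with $\sum_i\alpha_i=\mu(S_\phi(M))=1$. Regularity of $\mu$ is what lets us reduce to closed pieces without losing mass.

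The main obstacle, I expect, is the geometric/topological heart of the argument: establishing that $\delta$-stability of the continuous formula $\phi$ yields a well-behaved bounded rank on $S_\phi(M)$ whose ``residue'' at each level consists of points with small-diameter neighborhoods. In the first-order case this is clean because stability gives literal isolated points (a scattered space) and the pieces are singletons; here the best one can hope for is pieces of diameter $2\delta$, and one must check that the rank induction genuinely terminates and that the ``$\delta$-derived set'' operation interacts correctly with the metric on types. I anticipate that the precise definition of $\delta$-stability from Definition \ref{def:stable2} is engineered to make this work — in particular that a failure of the rank to be bounded can be unwound into an array $(a_i,b_j)$ with $|\phi(a_i,b_j)-\phi(a_j,b_i)|\geq\delta$ for $i<j$, contradicting $\delta$-stability — so the bulk of the real work is a careful rank analysis rather than anything measure-theoretic. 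A secondary technical point is ensuring the countable cover exists: this needs the type space (or at least each rank stratum) to be hereditarily Lindelöf or second countable in the relevant topology, which should follow since we work over a fixed small model $M$ with a separable language, or can be arranged by an appropriate reduction.
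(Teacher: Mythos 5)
Your diagnosis of the topological heart of the argument is on target: the paper does define a $\delta$-Cantor--Bendixson rank on $S_\varphi(M)$ (Definition \ref{def:CB}), proves it is finite from $\delta$-stability (Proposition \ref{prop:CBstable}, which is where the ``unwinding into a $\delta$-alternating array'' you anticipate takes place, via approximate definability of types and density character), and extracts small-diameter pieces at the bottom rank level via Proposition \ref{prop:BYU}$(c)$. However, your measure-theoretic extraction has a genuine gap: you try to get a \emph{countable} cover of each rank stratum by small-diameter sets, sourcing countability from Lindel\"of or second-countability of the type space. That is not available in general --- the theorem is stated for an arbitrary metric structure $M$, and $S_\varphi(M)$ need not be metrizable or second countable; you cannot pass to a separable reduct without changing the measure. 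Moreover, even if you produced an uncountable Borel partition into small-diameter pieces, it would not follow that only countably many of them carry the mass: showing the union of the null pieces is null, over an uncountable index set, needs a separate argument you have not supplied. Your base case is also slightly off: Proposition \ref{prop:BYU}$(c)$ gives \emph{open}, not clopen, small pieces, so the sets $U_i\cap\CBm_\delta(C)$ are merely Borel; and the ``slightly shrinking to keep them closed'' step is unneeded --- lower semi-continuity of the $d$-metric (Fact \ref{fact:topo}) already ensures that the closure of a set of diameter $\leq\delta$ still has diameter $\leq\delta$.

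The paper sidesteps the countability issue entirely by reversing the logic. It first proves (Lemma \ref{lem:sumtypes}), by induction on $\CB_\delta$-rank, that any closed $C$ with $\mu(C)>0$ and finite rank contains a closed $C'\subseteq C$ with $\diam(C')\leq\delta$ and $\mu(C')>0$; the inductive step uses regularity of $\mu$ to pass from the Borel set $C\setminus\CBm_\delta(C)$ to a closed subset of strictly smaller rank. The decomposition in Theorem \ref{thm:sumtypes} then takes a \emph{maximal} family $\{C_i\}_{i\in I}$ of pairwise disjoint positive-measure closed sets of diameter $\leq 2\delta$. Countability of $I$ is immediate from countable additivity of $\mu$ (you cannot have uncountably many disjoint positive-measure sets), not from any topological property. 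Exhaustion $\mu(\bigcup_i C_i)=1$ follows by regularity plus the lemma: if not, regularity produces a positive-measure closed $C$ disjoint from $\bigcup_i C_i$, and the lemma produces a small-diameter positive-measure closed $C'\subseteq C$, contradicting maximality. So the decisive move is to treat countability as a measure-theoretic consequence and let a Zorn-type maximality argument replace your stratify-cover-disjointify scheme.
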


Here ``diameter" refers to the local ``$d$-metric" on $S_\varphi(M)$ (see Definition \ref{def:top}), which is simply the discrete metric in the classical setting. When $\varphi(x,y)$ is fully stable, Theorem \ref{prethm:sumtypes} is similar to a result of Ben Yaacov from \cite{BYtop} (see Remark \ref{rem:BYtop}). In Section \ref{sec:upstairs}, we will use Theorem \ref{prethm:sumtypes} to prove  
the following  model-theoretic  regularity statement (paraphrasing Theorem \ref{thm:main-upstairs}). 

\begin{alphatheorem}\label{prethm:main-upstairs}
Let $M$ be an $\omega$-saturated metric structure. Suppose $\varphi(x,y)$ is  $\delta$-stable, and  $\mu$ and $\nu$ are Keisler measures on $S_\varphi(M)$ and $S_{\varphi^*}(M)$, respectively. Then for any $\epsilon>0$, there are $m,n\geq 1$ such that for any $\gamma\in (0,1)$, there are partitions $M^x=A_0\cup A_1\cup\ldots\cup A_m$ and $M^y=B_0\cup B_1\cup\ldots\cup B_n$ satisfying the following properties.
\begin{enumerate}[\hspace{5pt}$\ast$] 
\item For all $(i,j)\in [m]\times [n]$, the pair $(A_i,B_j)$ is $(5\delta;\gamma)$-homogeneous for $\mu$ and $\nu$.
\item $\mu(A_0)\leq \epsilon \mu(A_1)$ and $\mu(B_0)\leq\epsilon\mu(B_1)$.
\end{enumerate}
Moreover, $A_i$ and $B_j$ satisfy specific  ``definability conditions" involving $\varphi(x,y)$.
\end{alphatheorem}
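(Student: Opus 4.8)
The plan is to use Theorem \ref{prethm:sumtypes} applied to both $\mu$ and $\nu$ to produce the coarse partition on the type-space level, then pull back along the maps $a\mapsto\tp_\varphi(a/M)$ and $b\mapsto\tp_{\varphi^*}(b/M)$ to obtain partitions of $M^x$ and $M^y$. In detail: first I would apply Theorem \ref{prethm:sumtypes} to $\mu$ on $S_\varphi(M)$, getting pairwise disjoint closed sets $\{C_i\}_{i\in I}$, each of $d$-diameter at most $2\delta$, with $\mu=\sum_i\alpha_i\mu_i$ where $\mu_i$ concentrates on $C_i$ and $\sum_i\alpha_i=1$; symmetrically apply it to $\nu$ on $S_{\varphi^*}(M)$, getting $\{D_j\}_{j\in J}$ of diameter at most $2\delta$ with $\nu=\sum_j\beta_j\nu_j$. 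Since $\sum_i\alpha_i=1$, for the fixed $\epsilon$ there is a finite $I_0=\{1,\dots,m\}\subseteq I$ with $\sum_{i\notin I_0}\alpha_i<\epsilon\alpha_1$ (after reindexing so $\alpha_1$ is, say, maximal — or at least bounded below by $1/|I_0|$ times the mass of the kept part), and similarly a finite $J_0=\{1,\dots,n\}$; these $m,n$ depend only on $\epsilon$, $\mu$, $\nu$ (hence on $\varphi,\delta,\epsilon$ and the data), and crucially are chosen \emph{before} $\gamma$.

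Next, given $\gamma\in(0,1)$, I would refine. The key point is that each $\mu_i$ is a Keisler measure concentrating on a set $C_i$ of small diameter, so by regularity of Borel measures there is a closed (indeed, by $\omega$-saturation, $\varphi$-definable up to the desired precision) set carrying all but $\gamma$ of its mass; and because $\diam_d(C_i)\le 2\delta$, the value of $\varphi(x,b)$ is essentially constant — within $2\delta$ — as $x$ ranges over the realizations whose $\varphi$-type lies in $C_i$, \emph{for each fixed $b$}. This is precisely what is needed for homogeneity after accounting for the $\gamma$-error and the duality between the two quantification orders. So I would set $A_i$ (for $i\in[m]$) to be the preimage in $M^x$ of an appropriate $\varphi$-definable approximation of $C_i$ obtained via $\omega$-saturation, put all leftover mass into $A_0$, and do the same on the $y$-side with the $D_j$. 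The homogeneity constant comes out as $5\delta$: roughly $2\delta$ from the diameter of $C_i$, $2\delta$ from $D_j$, and an extra $\delta$ of slack from the stability defect / the switch between the two orders of quantifiers (this is where the ``$5\delta$'' in the graph statement's ``$5\delta+\epsilon$'' originates, before the finite-to-pseudofinite transfer adds the $\epsilon$). The mass bounds $\mu(A_0)\le\epsilon\mu(A_1)$ and likewise for $B_0$ are immediate from the choice of $I_0,J_0$, since $\mu(A_0)=\sum_{i\notin I_0}\alpha_i + (\text{total }\gamma\text{-error})$ — here one must be slightly careful that the $\gamma$-errors, which depend on the \emph{later} choice of $\gamma$, do not spoil the bound set up with $\epsilon$; this is handled by absorbing the $\gamma$-portion of each $A_i$ back into $A_i$ rather than $A_0$ (i.e.\ $A_0$ only collects the $i\notin I_0$ part), so that $\gamma$ affects only homogeneity and not the exceptional-set bound.

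The definability conditions are read off directly from the construction: each $A_i$ is a Boolean combination (in fact an intersection of finitely many $\{x:\varphi(x,b)\lessgtr r\}$-type conditions, coming from how $\omega$-saturation lets one approximate the closed set $C_i$ in $S_\varphi(M)$ by a clopen-ish neighborhood definable with parameters from $M$), and similarly for $B_j$ with $\varphi^*$; stating this precisely is exactly the content of ``specific definability conditions involving $\varphi(x,y)$'' and matches what is needed later for the finite transfer and for Remark \ref{rem:complexity}. The main obstacle I anticipate is the interaction of the two limits: $m,n$ must be fixed independently of $\gamma$ (so that in the pseudofinite-to-finite transfer one can first fix the partition sizes and only then let the decay function act), which forces the argument to be structured as ``truncate the type-space decomposition using $\epsilon$ \emph{first}, then do the $\gamma$-approximation within each piece'' rather than the more naive single-pass refinement; getting the bookkeeping of the two error budgets ($\epsilon$ for exceptional mass, $\gamma$ for homogeneity) cleanly separated, and verifying that the diameter-$2\delta$ pieces really do deliver $5\delta$-homogeneity in \emph{both} quantifier orders simultaneously, is the technical heart of the proof.
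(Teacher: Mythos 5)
Your outline matches the paper's architecture in several respects: apply Theorem~\ref{prethm:sumtypes} to both $\mu$ and $\nu$, truncate the countable families to finite ones using $\epsilon$ before $\gamma$ enters, approximate each piece by an explicitly definable set using regularity and $\omega$-saturation (this is exactly Corollary~\ref{cor:sumtypes}), and keep the $\gamma$-error budget separate from the $\epsilon$-budget for the exceptional sets. That part of the bookkeeping is sound and is in the spirit of Lemma~\ref{lem:upstairs-hom}.

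However, there is a genuine gap at the heart of the homogeneity claim, namely in the step where you pass from ``$\varphi(x,b)$ is within $2\delta$ of a value on $C_i$ for each \emph{fixed} $b$'' to ``there is a \emph{single} $r_{i,j}$ working uniformly for $b\in B_j$.'' You write that the diameter-$2\delta$ fact is ``precisely what is needed for homogeneity after accounting for the $\gamma$-error and the duality between the two quantification orders,'' but it is not: the value you get from $\diam(C_i)\leq 2\delta$ is $\varphi(p_i,b)$ for a fixed $p_i\in C_i$, and this depends on $b$. The diameter bound $\diam(D_j)\leq 2\delta$ controls $|\varphi(a,b)-\varphi(a,b')|$ only for $a\in M^x$, \emph{not} for types $p_i\in S_\varphi(M)$, so you cannot directly argue that $\varphi(p_i,b)$ is nearly constant as $b$ ranges over $B_j$. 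Moreover you cannot sidestep this by picking $b^*$ realizing a type in $D_j$, since $D_j$ need not contain a realized type. The paper's proof bridges the two type spaces with Lemma~\ref{lem:UDT}: $\delta$-stability gives a $\varphi^*$-generated formula $\zeta^\delta_\varphi(y,\cbar_i)$ with $\varphi(p_i,b)\approx_\delta\zeta^\delta_\varphi(b,\cbar_i)$ for all $b$, and because $\zeta^\delta_\varphi$ is a min-max of instances of $\varphi^*$ it is $1$-Lipschitz with respect to the $d$-metric on $S_{\varphi^*}(M)$ (Proposition~\ref{prop:lip}); this is what lets you set $r_{i,j}=\zeta^\delta_\varphi(q_j,\cbar_i)$ for a fixed $q_j\in D_j$ and get $\zeta^\delta_\varphi(q,\cbar_i)\approx_{2\delta} r_{i,j}$ on all of $D_j$. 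The $5\delta$ then genuinely decomposes as $2\delta$ (from $\diam C_i$) $+\,\delta$ (from the approximate definition) $+\,2\delta$ (from $\diam D_j$ via the Lipschitz bound on $\zeta^\delta_\varphi$). Your proposal invokes $\delta$-stability only through Theorem~\ref{prethm:sumtypes}; the second, essential use of $\delta$-stability through approximate $\varphi^*$-definability of $\varphi$-types is missing, and your attribution of the extra $\delta$ to an unspecified ``stability defect / switch of quantifier order'' does not substitute for it. See Remark~\ref{rem:upstairs-hom}, which flags exactly this subtlety.
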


A more elaborate version of this theorem is Lemma \ref{lem:upstairs-hom}, which is stated in such a form that when $M$ is an  ultraproduct of (continuous) finite structures, then the data can be transferred to the finite to achieve Theorem \ref{prethm:main-finite-thm}.  The very delicate aspects of the transfer are carried out in Sections \ref{sec:transfer} and \ref{sec:main-finite-proof}. When specialized to the classical first-order setting, the proof structure of Theorem \ref{prethm:main-upstairs} becomes a kind of synthesis of the strategies from \cite{MaPi} and \cite{PiDR}, with some additional simplifications.

\subsection*{Acknowledgements} 
The authors were supported by NSF grants DMS-1665025, DMS-1665035, DMS-1760212, and DMS-2054271 (Pillay), and DMS-1855503 (Conant). Much of the work in this paper was carried out during the 2021 Thematic Program on Trends in Pure and Applied Model Theory at the Fields Institute, where Pillay was appointed as a Simons Distinguished Visitor. Research funds from this appointment   were also used to support Chavarria and Conant at Fields. Chavarria's Notre Dame stipend for Fall 2021 was provided through DMS-1665025.

We thank the Fields Institute for their hospitality. The second author would  also like to thank J. Hanson for helpful conversations. Special thanks are  due to C. Terry for a number of invaluable discussions, and for suggesting Definition \ref{def:approxstr}.

\section{Stable functions}\label{sec:stable}

Before stating the definition of stability for functions, let us set some general notation, which will be used throughout the paper.
\begin{enumerate}[\hspace{5pt}$\ast$]
\item Given an integer $n\geq 1$, let $[n]=\{1,\ldots,n\}$. 

\item  Given a  set $X$ and a subset $A\seq X$, we let $\boldsymbol{1}_A\colon X\to \{0,1\}$ denote the indicator function of $A$. When  $X$ is understood, we write $\boldsymbol{1}$ for $\boldsymbol{1}_X$. 

\end{enumerate}

\begin{definition}\label{def:stable}
Let $f\colon V\times W\to \R$ be a function, where $V$ and $W$ are arbitrary sets. Given a linear order $I$ and  some $\delta\geq 0$, we say that $f$ is \textbf{$(I,\delta)$-stable} if there do not exist $a_i\in V$ and $b_i\in W$, for $i\in I$, such that 
\[
|f(a_i,b_j)-f(a_j,b_i)|\geq\delta \text{ for all $i<j$ from $I$}.
\]
Given $k\geq 1$, we say that $f$ is \textbf{$(k,\delta)$-stable} if it is $([k],\delta)$-stable; and we say that $f$ is \textbf{$\delta$-stable} if it is $(\N,\delta)$-stable. (Here we use the standard orders on $\N$ and $[k]$.) 
\end{definition}

It is easy to check that $f\colon V\times W\to \R$ is $\delta$-stable if and only if it is $(I,\delta)$-stable for all infinite linear orders $I$.   We also note that in certain contexts, $f$ is $\delta$-stable if and only if  it is $(k,\delta)$-stable for some $k\geq 1$ (specifically, when $f$ has bounded image and $(V,W,f)$ is ``saturated" in the model-theoretic sense). 

Our definition of stability has been formulated in a particular way so as to agree with previous work in the model-theoretic setting (e.g., \cite{BYU}), and connect to the appearance of stability in broader mathematics (as we discuss below). However, the definition differs slightly from the usual definition of stability for (bipartite) graphs, and so we take a moment to clarify this.

\begin{remark}\label{rem:stable-graphs}
Let $V$ and $W$ be sets and fix a binary relation $E\seq V\times W$. Then $E$ is \textbf{$k$-stable} if there do not exist $a_1,\ldots,a_k\in V$ and $b_1,\ldots,b_k\in W$ such that $E(a_i,b_j)$ holds if and only if $i\leq j$ (i.e., $(V,W;E)$ omits $([k],[k];\leq)$ as an induced subgraph). While this does not necessarily coincide with $(k,\delta)$-stability of the indicator function $\boldsymbol{1}_E$ for some choice of $\delta$, note that if $\boldsymbol{1}_E$ is $(k,1)$-stable then it easily follows that $E$ is $k$-stable as defined above. (Moreover, $\boldsymbol{1}_E$ is $(k,1)$-stable if and only if it is $(k,\delta)$-stable for any $\delta>0$.) 
 Conversely, by a routine Ramsey argument, one can show that  if $E$ is $k$-stable then $\boldsymbol{1}_E$ is $(O_k(1),1)$-stable. We will comment on a continuous analogue of this situation in the appendix.
\end{remark}

Call a function $f\colon V\times W\to \R$ \textbf{stable} if it is $\delta$-stable for all $\delta>0$. This notion turns out to be quite pervasive in functional analysis. Firstly, it corresponds to Grothendieck's ``double-limit" condition from \cite{GroWAP},  used to characterize relatively weakly compact sets in the Banach space of bounded continuous functions on an arbitrary topological space. This connection has been used to provide  analytic proofs of several important theorems from stability theory \cite{BYGro,PilGro}.  Secondly, Krivine and Maurey \cite{KrMau} defined a Banach space $B$ to be \emph{stable} if the function $f(x,y) = \|x+y\|$ is stable when restricted to the unit ball $U$ in $B$. They observe that any $L^p$-space is stable for $1\leq p<\infty$, and their main result is that any infinite-dimensional stable Banach space contains $\ell^p$ for some $1\leq p<\infty$. 

Thirdly, we discuss Hilbert spaces.  Given a Hilbert space $H$, there is a natural interpretation of $H$ as a metric structure in an appropriate (multi-sorted) language, which includes the vector space structure and the inner product. In this case, the complete theory $T$ of $H$ is stable, meaning that \emph{every}  formula is stable  in every model of $T$ (see \cite[Section 15]{BBHU}). Thus any Hilbert space satisfies the Krivine-Maurey definition of stability. Another natural formula to consider is the inner product function $f(x,y)=\langle x,y\rangle$, which is stable in any Hilbert space  when restricted to the unit ball.  It is now understood that the stability of the inner product in Hilbert spaces  is largely responsible for the recurring phenomenon of stable formulas arising naturally in several previous settings. A well-known example is \cite[Proposition 2.25]{HruAG} from Hrushovski's breakthrough work on the structure of approximate groups. Previous related results include \cite[Lemma 6.1]{HruPFF}, \cite[Lemma 5.21]{HrPiGLF}, and \cite[Lemma 3.4]{KiPi}, each of which relates to the study of \emph{simple} theories. The proposition from \cite{HruAG} is also a key ingredient in the model-theoretic proof from \cite{PillayStarchenko} of Tao's algebraic  regularity lemma for definable sets in finite fields \cite{TaoFFRL}. 

We now take the opportunity to explain how stability of Hilbert spaces can be used to produce very general examples of $(k,\delta)$-stable functions (such as those underlying \cite[Proposition 2.25]{HruAG}). Fix a formula $\varphi(x,y)$ in the language of Hilbert spaces. Assume $\varphi(x,y)$ is  $[\nv 1,1]$-valued when restricted to the unit ball (e.g., $\langle x,y\rangle$ or $\frac{1}{2}\|x+y\|$, but $x$ and $y$ could also be tuples of variables). Now let $H$ be a Hilbert space with unit ball $U$. Given arbitrary sets $V$ and $W$, and functions $g\colon V\to U$ and $h\colon W\to U$, let $\varphi_{g,h}\colon V\times W\to [\nv 1,1]$ be defined by $\varphi_{g,h}(a,b)=\varphi(g(a),h(b))$. 

\begin{theorem}\label{thm:hilbert}
For any $\varphi(x,y)$ and $\delta>0$, there is some $k\geq 1$ such that for any $H$, $V$, $W$, $g$, and $h$ as above, $\varphi_{g,h}\colon V\times W\to [\nv 1,1]$ is $(k,\delta)$-stable.
\end{theorem}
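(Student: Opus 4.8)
The plan is to deduce this from the stability of Hilbert spaces together with a compactness argument that turns the (infinitary) assertion ``$\varphi$ is stable'' into a uniform finite bound. The reduction goes as follows. The unit ball is a sort in the language of Hilbert spaces, so the restriction of $\varphi$ to the unit ball is a genuine continuous formula $\varphi(x,y)$ (with $x,y$ possibly tuples of variables over the unit-ball sort, which changes nothing). For any $H,V,W,g,h$ as in the statement, a $(k,\delta)$-skew configuration for $\varphi_{g,h}$ --- that is, $a_1,\dots,a_k\in V$ and $b_1,\dots,b_k\in W$ with $|\varphi_{g,h}(a_i,b_j)-\varphi_{g,h}(a_j,b_i)|\geq\delta$ for all $i<j$ --- pushes forward along $g$ and $h$ to elements $g(a_i),h(b_i)$ of the unit ball $U(H)$ satisfying the same inequalities for $\varphi$. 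So it suffices to produce $k=k(\varphi,\delta)$ such that no Hilbert space contains $u_1,\dots,u_k,v_1,\dots,v_k$ in its unit ball with $|\varphi(u_i,v_j)-\varphi(u_j,v_i)|\geq\delta$ for all $i<j$; call such a tuple a $\delta$-skew configuration of length $k$.

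Suppose toward a contradiction that for every $k$ some Hilbert space $H_k$ contains a $\delta$-skew configuration of length $k$ in its unit ball. Let $\mathcal{H}=\prod_k H_k/\mathcal{U}$ be a nonprincipal ultraproduct in the sense of continuous logic. Then $\mathcal{H}$ is again a Hilbert space (the class is closed under ultraproducts), and for each fixed $n$, truncating the configuration in $H_k$ to its first $n$ pairs and passing to the ultraproduct yields, by {\L}o\'{s}'s theorem, a $\delta$-skew configuration of length $n$ in $U(\mathcal{H})$; since $\mathcal{H}$ is $\aleph_1$-saturated, the resulting finitely satisfiable countable type in the variables $(x_i,y_i)_{i<\omega}$ is realized, giving an infinite $\delta$-skew sequence $(a_i,b_i)_{i<\omega}$ in $U(\mathcal{H})$. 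I would then run Ramsey's theorem: fixing a partition of $[\nv 1,1]^2$ into finitely many pieces of diameter less than $\delta/3$ and coloring each pair $i<j$ by the piece containing $(\varphi(a_i,b_j),\varphi(a_j,b_i))$, an infinite monochromatic subsequence makes the pattern uniform. Because the two coordinates always differ by at least $\delta$ while a piece has diameter less than $\delta/3$, one coordinate is uniformly the larger; reversing the order of the sequence if necessary, and reindexing by $a_i\mapsto a_{2i}$, $b_i\mapsto b_{2i+1}$ to absorb the diagonal, one obtains a real $r$ and $\delta'>0$ with $\varphi(a_i,b_j)\geq r+\delta'$ for $i<j$ and $\varphi(a_i,b_j)\leq r$ for $i\geq j$. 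This is exactly a witness to the order property for $\varphi$ in $\mathcal{H}$.

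Finally, since every formula is stable in every Hilbert space (\cite[Section 15]{BBHU}), $\varphi$ cannot have the order property, which is the desired contradiction; the bound $k$ obtained this way depends only on $\varphi$ and $\delta$, since $H,V,W,g,h$ entered only by supplying points of some unit ball. (Alternatively, one can phrase the argument entirely with finite Ramsey numbers and the observation that a fully stable formula is $\delta'$-stable in the sense of Definition~\ref{def:stable2} for every $\delta'>0$, which already packages a finite bound on $\delta'$-alternating order configurations.) I expect the only delicate point to be this first reduction --- making sure the passage from ``$\varphi$ is stable'' to a genuinely uniform finite bound is legitimate, in particular that the ultraproduct/saturation step correctly handles arbitrary (including finite-dimensional) Hilbert spaces --- together with the routine bookkeeping needed to match the Ramsey-extracted pattern to the definition of the order property.
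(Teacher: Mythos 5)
Your proposal is correct and is essentially the paper's argument (reduce to a uniform bound in Hilbert spaces, then apply compactness and stability of the Hilbert space theory), just with the compactness step spelled out via an ultraproduct and saturation. One small inefficiency: the Ramsey detour to produce an order-property configuration is not needed. The notion of stability used in this paper (Definition \ref{def:stable2}, following \cite[Definition 7.1]{BYU}) is literally ``for every $\delta'>0$ there is no infinite sequence $(a_i,b_i)$ with $|\varphi(a_i,b_j)-\varphi(a_j,b_i)|\geq\delta'$ for $i<j$,'' so the infinite $\delta$-skew sequence you extract in $U(\mathcal{H})$ is already a direct contradiction to the stability of (every completion of) the theory of Hilbert spaces --- no reformulation via the order property is required. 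If you do keep the Ramsey step, note a small slip in the diagonal bookkeeping: the reindexing $a_i\mapsto a_{2i}$, $b_j\mapsto b_{2j+1}$ puts the diagonal $i=j$ into the ``original $i'<j'$'' regime, so the pattern you obtain is $\varphi(a_i,b_j)\geq r+\delta'$ for $i\leq j$ and $\varphi(a_i,b_j)\leq r$ for $i>j$ (matching the convention in Definition \ref{def:stable-alt}), not $i<j$ versus $i\geq j$ as written; this is cosmetic and does not affect the conclusion.
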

\begin{proof}
It suffices to show that for any $\varphi(x,y)$ and $\delta>0$, there is some $k\geq 1$ such that $\varphi(x,y)$ is $(k,\delta)$-stable in any Hilbert space. But this follows from compactness and the fact that any completion of the theory of Hilbert spaces is stable. 
\end{proof}

Let us point out some specific cases of the previous theorem.

\begin{corollary}\label{cor:hilbert}
For any $\delta>0$ there is some $k\geq 1$ such that, in each case below, the function $f$ is $(k,\delta)$-stable. 
\begin{enumerate}[$(i)$]
\item Let $X$, $V$, and $W$ be finite sets, and fix arbitrary functions $g\colon X\times V\to [\nv 1,1]$ and $h\colon X\times W\to [\nv 1,1]$. Let $f\colon V\times W\to [\nv 1,1]$ be defined by 
\[
\textstyle f(a,b)=\frac{1}{|X|}\sum_{x\in X}g(x,a)h(x,b).
\]
\item Let $G$ be a locally compact group, and fix  continuous functions $g,h\colon G\to [\nv 1,1]$ with compact support $C$. Let $\mu$ be a left Haar measure on $G$, normalized so that $\mu(C\cup C\inv)\leq 1$. The convolution $g\ast h\colon G\to [\nv 1,1]$ is the function   
\[
(g\ast h)(x)=\int g(t)h(t\inv x)\, d\mu.
\]
Let $f\colon G\times G\to [\nv 1,1]$ be defined by $f(x,y)=(g\ast h)(xy)$. 
\item Let $M$ be a metric structure and let $\lf(x)$ be a Keisler functional over $M$ (see Definition \ref{def:KeisF}). Fix $[\nv 1,1]$-valued formulas $\psi_1(x,y)$ and $\psi_2(x,z)$. Let $f\colon M^y\times M^z\to [0,1]$ be defined by $f(a,b)=\lf(\psi_1(x,a)\psi_2(x,b))$. 
\end{enumerate}
\end{corollary}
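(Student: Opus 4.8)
The plan is to deduce all three cases from Theorem~\ref{thm:hilbert} applied to the single formula $\varphi(x,y)=\langle x,y\rangle$ in the language of Hilbert spaces, which is $[\nv 1,1]$-valued on the unit ball by Cauchy--Schwarz. Fix $\delta>0$ and let $k=k(\langle x,y\rangle,\delta)$ be the integer supplied by Theorem~\ref{thm:hilbert} for this $\varphi$; it then suffices, in each case, to produce a Hilbert space $H$ with unit ball $U$ and maps $g'\colon V\to U$, $h'\colon W\to U$ with $f(a,b)=\langle g'(a),h'(b)\rangle$, since then $f=\varphi_{g',h'}$ and Theorem~\ref{thm:hilbert} yields $(k,\delta)$-stability; the uniformity of $k$ over all the data is inherited automatically. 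The guiding principle is that each $f$ is secretly an $L^2$ inner product of two families of functions each of norm at most $1$. This is clearest in $(i)$: take $H=L^2(X,\nu)$ with $\nu$ the uniform probability measure, and put $g'(a)=g(\cdot,a)$, $h'(b)=h(\cdot,b)$; then $\langle g'(a),h'(b)\rangle=\frac1{|X|}\sum_{x}g(x,a)h(x,b)=f(a,b)$, and $\|g'(a)\|_2^2=\frac1{|X|}\sum_x g(x,a)^2\le 1$ since $|g|\le 1$, so $g'(a)\in U$, and likewise $h'(b)\in U$.

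For $(iii)$, let $H$ be the Hilbert space obtained from a GNS-type construction on the positive, normalized functional $\lf$ (Definition~\ref{def:KeisF}): equip the space of $M$-definable predicates in the variable $x$ with the positive semi-definite form $\langle P,Q\rangle=\lf(PQ)$, quotient by the null space, and complete. Set $g'(a)=[\psi_1(x,a)]$ and $h'(b)=[\psi_2(x,b)]$; these lie in $U$ because $\lf(\psi_i(x,a)^2)\le\lf(1)=1$, using positivity of $\lf$ and the fact that $\psi_i$ is $[\nv 1,1]$-valued, and $\langle g'(a),h'(b)\rangle=\lf(\psi_1(x,a)\psi_2(x,b))=f(a,b)$ by construction. (Case $(i)$ is the special case in which $\lf$ is integration against the uniform measure on a finite type space, but it is worth recording on its own.)

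For $(ii)$, take $H=L^2(G,\mu)$ and define $g'(x)\in H$ by $g'(x)(u)=g(xu)$ and $h'(y)\in H$ by $h'(y)(u)=h(u\inv y)$; both are continuous and compactly supported, hence in $L^2(G,\mu)$. The substitution $t=xu$, using left-invariance of $\mu$, rewrites $(g\ast h)(xy)=\int g(t)h(t\inv xy)\,d\mu(t)$ as $\int g(xu)h(u\inv y)\,d\mu(u)=\langle g'(x),h'(y)\rangle$, so that $f(x,y)=\langle g'(x),h'(y)\rangle$. For the norm bounds, $g'(x)$ is supported on $\{u:xu\in C\}=x\inv C$, whence $\|g'(x)\|_2^2\le\mu(x\inv C)=\mu(C)\le 1$, and $h'(y)$ is supported on $\{u:u\inv y\in C\}=yC\inv$, whence $\|h'(y)\|_2^2\le\mu(yC\inv)=\mu(C\inv)\le 1$; both estimates use left-invariance of $\mu$ together with the normalization $\mu(C\cup C\inv)\le 1$.

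The norm estimates and the change of variables in $(ii)$ are routine bookkeeping. The one place that needs genuine care is $(iii)$: unwinding the definition of a Keisler functional so that $(P,Q)\mapsto\lf(PQ)$ really is a positive semi-definite form with $\lf(1)=1$, and verifying that the resulting maps land in the unit ball rather than merely a bounded ball. Once that is in place, each of the three cases fits the template of Theorem~\ref{thm:hilbert} verbatim.
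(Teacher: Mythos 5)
Your argument reduces all three cases to Theorem~\ref{thm:hilbert} applied to $\varphi(x,y)=\langle x,y\rangle$ by exhibiting $f$ as $\varphi_{g',h'}$ for suitable maps into the unit ball of an $L^2$-type Hilbert space; this is exactly the paper's strategy, and the inner-product identities and norm bounds you supply check out. The one place your route genuinely differs is case $(iii)$: you build $H$ by a GNS-style construction, taking the positive semi-definite form $\langle P,Q\rangle=\lf(PQ)$ on predicates, quotienting by the null space and completing, whereas the paper passes through the Riesz--Markov--Kakutani representation to obtain the Borel measure $\mu$ on $S_x(M)$ associated to $\lf$ and then takes $H=L^2(S_x(M),\mu)$ directly. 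These two constructions produce isomorphic Hilbert spaces (continuous functions on $S_x(M)$ are $\|\cdot\|_2$-dense in $L^2(\mu)$), so the distinction is cosmetic; yours is slightly more self-contained in that it never leaves the algebra of predicates, while the paper's makes the link between $\lf$, the measure $\mu$, and case $(i)$ more transparent, and both correctly verify $\lf(\psi_i(x,a)^2)\le\lf(1)=1$ for the unit-ball bound. Incidentally, in case $(ii)$ your choice $g'(x)(u)=g(xu)$, $h'(y)(u)=h(u\inv y)$ is the one that correctly recovers $(g\ast h)(xy)$ after the left-invariant change of variable; the paper's printed $g'(a)(t)=g(a\inv t)$ would instead yield $(g\ast h)(a\inv b)$, so the paper carries a small typo ($a\inv t$ for $at$) which your computation implicitly corrects.
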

\begin{proof}
In each case, we apply Theorem \ref{thm:hilbert} with $\varphi(x,y)=\langle x,y\rangle$. For case $(i)$, let $H=\R^X$ with the \emph{normalized} inner product. Then $f$ is $\varphi_{g',h'}$, where $g'\colon V\to U$ maps $a$ to $g(x,a)$ and $h'\colon W\to U$ maps $b$ to $h(x,b)$.  

For case $(ii)$, let $H=L^2(G,\mu)$ with inner product $\langle u,v\rangle=\int uv\,d\mu$. Let $g',h'\colon G\to U$ be defined by $g'(a)=g(a t)$ and $h'(b)=h(t\inv b)$. Then $f$ is $\varphi_{g',h'}$. 

For case $(iii)$, let $\mu$ be the Borel measure on $S_x(M)$ corresponding to $\lf$, and let $H=L^2(S_x(M),\mu)$ with inner product as in $(ii)$. Let $g\colon M^y\to U$ and $h\colon M^z\to U$ be defined by $g(a)=\psi_1(x,a)$ and $h(b)=\psi_2(x,b)$. Then $f$ is $\varphi_{g,h}$. 
\end{proof}
 
Linear functionals on metric structures will be discussed in more detail in Section \ref{sec:LF}. 
 Note that case $(iii)$ of Corollary \ref{cor:hilbert} generalizes case $(i)$. 
Moreover, if one applies  $(iii)$ to a \emph{classical} first-order structure $M$, and identifies Boolean formulas with their indicator functions, then $f(a,b)=\mu(\psi_1(x,a)\wedge\psi_2(x,b))$ where $\mu$ is a  Keisler measure on $\Def_x(M)$. If $M$ is sufficiently saturated and $\mu$ is invariant (over some small set), then stability of $f$ is \cite[Proposition 2.25]{HruAG}. This last point, namely the use of stability of the inner product in Hilbert spaces to deduce \cite[Proposition 2.25]{HruAG}, was communicated to the third author by Remi Jaoui, after seeing it in a course by Hrushovski in Paris.

\begin{remark}
In the context of Theorem \ref{thm:hilbert}, suppose $H$ is $L^2(X,\mu)$ for some probability space $(X,\mu)$, $\varphi(x,y)$ is the inner product, and $g$ and $h$ take values in the set of $v\in H$ such that $\|v\|_\infty=1$ (which is a subset of the unit ball). Then in this case, one can show $k=\exp^2(O(\delta\inv))$  by an elementary argument due to Tao \cite{TaoARLblog}. Note that this situation covers most of the examples in Corollary \ref{cor:hilbert}.
\end{remark}

\section{Preliminaries on continuous logic}

We assume familiarity with the foundations of continuous logic and continuous model theory. See \cite{BBHU} for an introduction to this subject.

\subsection{Basic notions}

Let $\cL$ be a continuous language. We work in the setting where formulas may take values in a bounded subset of $\R$. Specifically, each predicate symbol $P$ in $\cL$  comes with a distinguished closed bounded interval $I_P\seq\R$ (in addition to the modulus of uniform continuity $\Delta_P$), and the definition of an $\cL$-structure  includes the requirement that $P$ is $I_P$-valued.

Let $T$ be a complete $\cL$-theory, and fix an $\cL$-formula $\varphi(x,y)$. For simplicity, we assume that $\varphi(x,y)$ is $[0,1]$-valued (but this is not crucial). Throughout this section, we work with a fixed model $M\models T$.

Let $S_\varphi(M)$ denote the space of local $\varphi$-types over $M$.  Recall that a type $p\in S_\varphi(M)$ is uniquely determined by the function $b\mapsto \varphi(p,b)$ from $M^y$ to $[0,1]$. Moreover, $S_\varphi(M)$ is a compact Hausdorff space under the natural quotient topology inherited from $S_x(M)$. Further details can be found in \cite[Section 6]{BYU}.

\begin{definition}  $~$
\begin{enumerate}[$(1)$]
\item A \textbf{$\varphi$-formula over $M$} is a continuous function $\psi\colon S_\varphi(M)\to \R$.
\item A \textbf{$\varphi$-generated formula} is a uniformly continuous combination of $\varphi(x,y_i)$ for $i<\omega$, i.e., a formula of the form $\zeta(x,\ybar)=\alpha(\varphi(x,y_i)_{i<\omega}$), where $\alpha\colon [0,1]^\omega\to \R$ is a continuous function.
\end{enumerate}
\end{definition}

\begin{remark}\label{rem:formulas}
In \cite{BYU}, continuous functions on $S_\varphi(M)$ are referred to as \emph{$\varphi$-predicates} (over $M$), and the word ``formula" is reserved for the smaller class of syntactic or \emph{finitary} formulas. For our purposes, this distinction will not be significant since we will either be working with the general class of formulas as defined above, or with very specific families of finitary formulas. 
\end{remark}

The next result, which is part of \cite[Fact 6.4]{BYU}, says that $\varphi$-formulas (over $M$) coincide with ``instances" of  $\varphi$-generated formulas.   

\begin{fact}
A function $\psi\colon S_\varphi(M)\to \R$ is continuous if and only if there is a $\varphi$-generated formula $\zeta(x,\ybar)$ and some $\bbar\in M^{\ybar}$ such that $\psi(x)=\zeta(x,\bbar)$.
\end{fact}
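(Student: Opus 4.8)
The plan is to prove the two implications separately; the forward implication is routine and the converse is where the content lies.

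For the easy direction, suppose $\psi(x)=\zeta(x,\bbar)$ for a $\varphi$-generated formula $\zeta(x,\ybar)=\alpha\big((\varphi(x,y_i))_{i<\omega}\big)$ with $\alpha\colon[0,1]^\omega\to\R$ continuous and $\bbar=(b_i)_{i<\omega}$ from $M^y$. Each evaluation $p\mapsto\varphi(p,b_i)$ is continuous on $S_\varphi(M)$: the formula $\varphi(x,b_i)$ is a continuous function on $S_x(M)$ that is constant on the fibres of the quotient map $S_x(M)\to S_\varphi(M)$ (since a $\varphi$-type is determined by the map $b\mapsto\varphi(p,b)$), hence descends to a continuous function on $S_\varphi(M)$. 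Therefore $p\mapsto(\varphi(p,b_i))_{i<\omega}$ is continuous from $S_\varphi(M)$ into $[0,1]^\omega$ with the product topology, and composing with the continuous $\alpha$ shows $\psi$ is continuous.

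For the converse, let $\psi\colon S_\varphi(M)\to\R$ be continuous, and for $b\in M^y$ write $\hat b\colon p\mapsto\varphi(p,b)$ for the associated continuous map $S_\varphi(M)\to[0,1]$. Since a $\varphi$-type over $M$ is determined by $b\mapsto\varphi(p,b)$, the family $\{\hat b:b\in M^y\}$ separates points of the compact Hausdorff space $S_\varphi(M)$, so by the Stone--Weierstrass theorem the unital real algebra it generates is dense in $C(S_\varphi(M))$. I would pick polynomials $\psi_n$, each in finitely many of the $\hat b$'s, converging uniformly to $\psi$; let $(b_i)_{i<\omega}$ enumerate (padding with repeats if necessary) all parameters appearing among the $\psi_n$, and set $e(p)=(\varphi(p,b_i))_{i<\omega}$, a continuous map $S_\varphi(M)\to[0,1]^\omega$ with compact, hence closed, image $K$. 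Each $\psi_n$ factors as $\psi_n=q_n\circ e$ for a continuous $q_n\colon[0,1]^\omega\to\R$ (a polynomial in finitely many coordinates); since $e$ is onto $K$, the sequence $(q_n|_K)$ is uniformly Cauchy, with uniform limit a continuous $\tilde\psi\colon K\to\R$ satisfying $\psi=\tilde\psi\circ e$. As $K$ is closed in the compact metrizable space $[0,1]^\omega$, the Tietze extension theorem supplies a continuous $\alpha\colon[0,1]^\omega\to\R$ extending $\tilde\psi$; then $\zeta(x,\ybar):=\alpha\big((\varphi(x,y_i))_{i<\omega}\big)$ is $\varphi$-generated and $\psi(x)=\zeta(x,\bbar)$ for $\bbar=(b_i)_{i<\omega}$.

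The main obstacle is the passage to a \emph{countable} parameter tuple: a priori $\psi$ could involve all of $M^y$, and one must produce countably many $b_i$ through which it factors. I would handle this through the Stone--Weierstrass approximants, each of which uses only finitely many parameters, so that their union is countable; this is cleaner than invoking the general fact that a continuous function on a compact product factors through countably many coordinates. The other point to watch is that a $\varphi$-generated formula allows an arbitrary continuous $\alpha$ on the Hilbert cube, not merely a polynomial in the coordinates --- and it is precisely this flexibility that legitimizes the Tietze extension step and closes the argument.
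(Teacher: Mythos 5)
The paper itself does not prove this Fact; it cites it as part of \cite[Fact 6.4]{BYU}, so there is no in-paper proof to compare against. Your argument is correct and is the standard route: the evaluation maps $\hat b$ separate points, Stone--Weierstrass gives a sequence of polynomial approximants each using finitely many parameters (which is the right way to extract a \emph{countable} parameter tuple), and the uniform limit factors through the compact image $K\subseteq[0,1]^\omega$, where Tietze supplies the continuous (hence, by compactness of the Hilbert cube, uniformly continuous) connective $\alpha$. The easy direction is also handled correctly.
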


Given a $\varphi$-formula $\psi(x)$ over $M$ and a set $B\seq\R$, define
\[
[\psi(x)\in B]\coloneqq \{p\in S_\varphi(M):\psi(p)\in B\}.
\]
 Given $\varphi$-formulas $\psi_1(x)$, $\psi_2(x)$ over $M$, and Borel sets $B_1,B_2\seq\R$, we let 
\[
[\psi_1(x)\in B_1\wedge \psi_2(x)\in B_2]\coloneqq [\psi_1(x)\in B_1]\cap [\psi_2(x)\in B_2],
\]
and similarly for $\vee$. 

\begin{definition}$~$
\begin{enumerate}[$(1)$]
\item A \textbf{sub-basic open set} in $S_\varphi(M)$ is a set of the form $[\varphi(x,b)\in U]$ where $b\in M^y$ and $U\seq\R$ is a bounded open interval. 
\item A \textbf{basic open set in $S_\varphi(M)$} is a finite intersection of sub-basic open sets. 
\item A subset of $S_\varphi(M)$ is \textbf{explicitly open} if it is a finite union of basic open sets.
\end{enumerate}
\end{definition}

\begin{fact}
The basic open sets in $S_\varphi(M)$ (as defined above) are a basis for the topology on $S_\varphi(M)$.
\end{fact}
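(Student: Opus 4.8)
The statement to prove is that basic open sets form a basis for the topology on $S_\varphi(M)$. Let me think about the standard approach.

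\medskip

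The plan is to verify the two defining conditions for a basis: that the basic open sets cover $S_\varphi(M)$, and that the intersection of two basic open sets contains a basic open set around each of its points. The first condition is immediate since $S_\varphi(M)$ itself is (vacuously, as the empty intersection, or via any nontrivial instance) obtainable, and in any case $\R$ is a union of bounded open intervals so each point lies in some $[\varphi(x,b)\in U]$; the second is trivial because a finite intersection of basic open sets is again a finite intersection of sub-basic open sets, hence basic open. So the real content is showing that every open subset of $S_\varphi(M)$ is a union of basic open sets, equivalently that the basic open sets generate the topology, i.e.\ that the actual topology is not strictly finer than the one they generate.

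\medskip

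For this I would recall that the topology on $S_\varphi(M)$ is the weakest topology making all the evaluation maps $p\mapsto \varphi(p,b)$ continuous, for $b\in M^y$ — this is exactly the quotient topology inherited from $S_x(M)$, under the identification of a type $p$ with the function $b\mapsto\varphi(p,b)$, as recorded in the preliminary discussion citing \cite[Section 6]{BYU}. A subbasis for such an initial topology is given by the preimages of open subsets of $\R$ under these evaluation maps; since the bounded open intervals form a basis for the topology of $\R$, the sets $[\varphi(x,b)\in U]$ with $U$ a bounded open interval already form a subbasis. Finite intersections of subbasic sets form a basis for any topology generated by a subbasis — this is the standard fact about topologies generated by a subbasis — and these finite intersections are precisely the basic open sets in the sense of the definition. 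Hence the basic open sets are a basis for the topology on $S_\varphi(M)$.

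\medskip

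The only mild subtlety — and the step I would be most careful about — is making sure that restricting to \emph{bounded} open intervals $U$ (as opposed to arbitrary open subsets of $\R$) still yields a subbasis: this is fine because $\varphi$ is $[0,1]$-valued, so the evaluation maps land in a bounded set, and every open subset of $[0,1]$ (relative to $\R$) is a union of bounded open intervals, so no generality is lost. One should also note that the map $p\mapsto(\varphi(p,b))_{b\in M^y}$ is a topological embedding of $S_\varphi(M)$ into $[0,1]^{M^y}$ with the product topology — this is precisely the statement that $S_\varphi(M)$ carries the initial topology from the evaluation maps — and then the claim reduces to the description of basic opens in a product topology. I would present this cleanly by quoting the relevant facts from \cite[Section 6]{BYU} about the topology on $S_\varphi(M)$ rather than re-deriving them, since the excerpt has already invoked that reference for exactly these foundational points.
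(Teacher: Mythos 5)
The paper states this as a Fact without proof, so there is no "paper's proof" to compare against; your argument is correct and is the standard one. You correctly reduce the claim to the observation that the topology on $S_\varphi(M)$ is the initial topology induced by the evaluation maps $p\mapsto\varphi(p,b)$ (equivalently, the topology pulled back from the embedding into $[0,1]^{M^y}$), so that the sets $[\varphi(x,b)\in U]$ with $U$ a bounded open interval form a subbasis and their finite intersections a basis. One minor remark: the identification of the quotient topology from $S_x(M)$ with the initial topology from the evaluation maps, which you invoke via \cite[Section 6]{BYU}, deserves a word — it holds here because the quotient map is a continuous surjection from a compact space to a Hausdorff space, so the codomain automatically carries the quotient topology; you implicitly rely on this but it is worth making explicit if one wants the proof to be self-contained rather than deferred to the reference.
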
 

\begin{definition}
A \textbf{Keisler measure on $S_\varphi(M)$} is a regular Borel probability measure on $S_\varphi(M)$.
\end{definition}

Let $\psi(x)$ be a $\varphi$-formula over $M$, and fix $B\seq \R$. If $B$ is Borel (resp., open, closed, etc.) then $[\psi(x)\in B]$ is Borel (resp., open, closed, etc.). In this case, if $\mu$ is a Keisler measure on $S_\varphi(M)$, then we write $\mu(\psi(x)\in B)$ to denote $\mu([\psi(x)\in B])$.

We will also use the previous notation in the global setting where $\mu$ is a Keisler measure on $S_x(M)$ and $\psi(x)$ is an $\cL_M$-formula. In this case, we have the pushforward measure $\tilde{\mu}$ of $\mu$ to $S_{\varphi}(M)$ and, if $\psi(x)$ is a $\varphi$-formula and $B\seq \R$ is Borel, then $\mu(\psi(x)\in B)=\tilde{\mu}(\psi(x)\in B)$.

\begin{definition}\label{def:alphaU}
Given a set $U\seq\R$, let $\alpha_U\colon \R\to [0,1]$ be defined by 
\[
\alpha_U(x)=\min\{d(x,\R\backslash U),1\}.
\]
\end{definition}

Note that $\alpha_U$ is uniformly continuous and so, in particular, it is a logical connective. Moreover, if $U$ is open then $\alpha_U(x)>0$ if and only if $x\in U$.

Suppose $V\seq S_\varphi(M)$ is an explicitly open set. Then we can write
\[
V=\bigcup_{i=1}^m\bigcap_{j=1}^{n_i}[\varphi(x,b_{i,j})\in U_{i,j}]
\]
where each $b_{i,j}$ is from $M^y$ and each $U_{i,j}$ is a bounded open interval. Set $\ybar=(y_{i,j})$ and define the $\varphi$-generated formula
\[
\psi(x,\ybar)\coloneqq \max_{1\leq i\leq m}\min_{1\leq j\leq n_i}\alpha_{U_{i,j}}(\varphi(x,y_{i,j})).
\]
Then $\psi(x,\ybar)>0$ is logically equivalent to $\bigvee_{i=1}^m\bigwedge_{j=1}^{n_i}\varphi(x,y_{i,j})\in U_{i,j}$. Therefore $V=[\psi(x,\bbar)>0]$. A formula of the form $\psi(x,\bbar)$ is called an \textbf{explicit $\varphi$-formula over $M$}. So we have shown that $V\seq S_\varphi(M)$ is an explicitly open set if and only if it is of the form $[\psi(x,\bbar)>0]$ for some explicit $\varphi$-formula $\psi(x,\bbar)$ over $M$. 

\begin{remark}\label{rem:basic-open}
One can further assume that the intervals $U_{i,j}$ have rational endpoints. However, we will not make this assumption in general. 
\end{remark}

Note that if $\psi(x)$ is a $\varphi$-formula over $M$, and $D\seq \R$ is closed, then the expression $\psi(x)\in D$ is logically equivalent to the  \emph{$\cL$-condition} $\alpha_{\R\backslash D}(\psi(x))=0$. In particular, $\psi(x)\in D$ defines a zeroset in $M^x$.

\begin{definition}\label{def:zeroset}
An \textbf{explicit $\varphi$-zeroset} is a subset of $M^x$ in the lattice\footnote{Recall that a lattice of subsets of some set $X$ is a collection of subsets  closed under (finite) unions and intersections.} generated by zerosets defined by $\varphi(x,b)\in D$, where $b\in M^y$ and $D\seq\R$ is closed.
\end{definition}

For example, if $\psi(x)$ is an explicit $\varphi$-formula over $M$, and $\eta\in [0,1]$, then $\psi(x)\geq\eta$ defines an explicit $\varphi$-zeroset in $M^x$.

\subsection{Linear functionals}
Let $V$ be an ordered real vector space with a norm $\|{\cdot}\|$. A \textbf{linear functional} on $V$ is map $\lf\colon V\to \R$ such that $\lf(rv+sw)=r\lf(v)+s\lf(w)$ for all $v,w\in V$ and $r,s\in \R$. A linear functional $\lf$ is \textbf{positive} if $\lf(v)\geq 0$ for all $v\in V$ such that $v\geq 0_V$. A linear functional $\lf$ is \textbf{bounded} if there is some $c\geq 0$ such that $|\lf(v)|\leq c\|v\|$ for all $v\in V$. The \textbf{operator norm} $\|\lf\|$ of a bounded linear functional $\lf$ is the infimum of all $c\geq 0$ satisfying the previous condition.

Given a compact Hausdorff space $X$, the space $\cC(X,\R)$ of all continuous functions from $X$ to $\R$ is a real ordered normed vector space (in fact a Banach space) under the pointwise partial order and the uniform norm $\|f\|_\infty=\sup_{x\in X}|f(x)|$.

\begin{fact} \label{fact:easy}
If $X$ is a compact Hausdorff space and $\lf$ is a positive linear functional on $\cC(X,\R)$, then $\lf$ is bounded and $\|\lf\|=\lf(\boldsymbol{1})$. 
\end{fact}
\begin{proof}
This is a basic exercise, but we will include the proof for later reference. We first fix $\varphi\in \cC(X,\R)$, and show $|\lf(\varphi)|\leq \lf(\boldsymbol{1})\|\varphi\|_\infty$. Note that $|\varphi|\leq \|\varphi\|_\infty\boldsymbol{1}$, and so $\|\varphi\|_\infty\boldsymbol{1}-\varphi\geq 0$ and $\|\varphi\|_\infty\boldsymbol{1}+\varphi\geq 0$. Thus $\|\varphi\|_\infty\lf(\boldsymbol{1})-\lf(\varphi)\geq 0$ and $\|\varphi\|_\infty \lf(\boldsymbol{1})+\lf(\varphi)\geq 0$, as desired. This shows that $\lf$ is bounded and $\|\lf\|\leq \lf(\boldsymbol{1})$. Conversely, $\lf(\boldsymbol{1})=|\lf(\boldsymbol{1})|\leq\|\lf\|\|\boldsymbol{1}\|_\infty=\|\lf\|$.
\end{proof}

Recall that if $X$ is a compact Hausdorff space, then a \textbf{Radon measure (on $X$)} is a  regular Borel measure $\mu$ on $X$ such that $\mu(X)<\infty$. Given a Radon measure $\mu$ on a compact Hausdorff space $X$, one obtains a positive linear functional $\lf_\mu$ on $\cC(X,\R)$ such that $\lf_\mu(f)=\int_X f\,d\mu$. Note that $\|\lf_\mu\|=\mu(X)$. The \emph{Riesz-Markov-Kakutani Theorem} states that the map $\mu\mapsto \lf_\mu$ is a bijection between Radon measures on $X$ and positive linear functionals on $\cC(X,\R)$. Therefore  regular Borel \emph{probability} measures correspond  to positive linear functionals of operator norm $1$.

\subsection{Linear functionals on metric structures}\label{sec:LF}

Let $\cL$ be a continuous language.
Given an $\cL$-structure $M$ and some sort $x$, we work with the vector space $\cC(S_x(M),\R)$ of \emph{$\cL$-formulas over $M$ in $x$}, with the pointwise partial order and uniform norm as in the previous subsection. We may also decorate the norm as $\|\varphi\|^M_\infty$ for emphasis. Note that the set of finitary (syntactic) $\cL$-formulas in $x$ over $M$ forms a subspace of $\cC(S_x(M),\R)$, which is dense by \cite[Fact 6.4]{BYU}.  Moreover, if $\varphi(x)$ is such a formula then $\|\varphi\|^M_\infty=\sup_{a\in M^x}|\varphi(a)|$.
 
 \begin{remark}
 Suppose $T$ is a complete $\cL$-theory. Then the ordered normed vector space structure on $\cL$-formulas (over $\emptyset$) in some fixed sort $x$ is part of the theory of $T$. In other words, if $M\equiv N$ then for any $\cL$-formula $\varphi(x)$, we have $\|\varphi\|^M_\infty=\|\varphi\|^N_\infty$, and given $\cL$-formulas $\varphi(x)$ and $\psi(x)$, we have $\varphi^M\leq\psi^M$ if and only if $\varphi^N\leq\psi^N$.
 \end{remark}

\begin{definition}\label{def:KeisF}
Given an $\cL$-structure $M$, a  \textbf{Keisler functional (in $x$) over $M$} is a positive linear functional $\lf$ on $\cC(S_x(M),\R)$ such that $\|\lf\|=1$. 
\end{definition}

 Note that any nonzero positive linear functional on $\cC(S_x(M),\R)$ can be normalized to a Keisler functional. Let us restate the Reisz-Markov-Kakutani-Theorem in this context.

 \begin{fact}
 If $M$ is an $\cL$-structure, then the map $\mu\mapsto \int d\mu$ is a bijection between Keisler measures on $S_x(M)$ and Keisler functionals in $x$ over $M$.
 \end{fact}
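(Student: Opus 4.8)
The statement to prove is the Riesz--Markov--Kakutani correspondence restated for Keisler measures and Keisler functionals: namely that $\mu \mapsto \int\,d\mu$ is a bijection between Keisler measures on $S_x(M)$ and Keisler functionals in $x$ over $M$.

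\medskip

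The plan is to simply deduce this from the general Riesz--Markov--Kakutani Theorem quoted earlier in the section, together with the two ``Fact'' statements already recorded about positive linear functionals on $\cC(X,\R)$. First I would recall that $S_x(M)$ is a compact Hausdorff space (it is the Stone-type space of complete types in the sort $x$ over $M$ in continuous logic, with its usual compact Hausdorff topology), so the general theory applies verbatim with $X = S_x(M)$. By the Riesz--Markov--Kakutani Theorem as stated, the assignment $\mu \mapsto \lf_\mu$, where $\lf_\mu(f) = \int_{S_x(M)} f\,d\mu$, is a bijection between Radon (i.e.\ finite regular Borel) measures on $S_x(M)$ and positive linear functionals on $\cC(S_x(M),\R)$, and under this bijection one has $\|\lf_\mu\| = \mu(S_x(M))$.

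\medskip

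The second step is to check that this bijection restricts correctly to the two subclasses in question. A Keisler measure on $S_x(M)$ is by definition a regular Borel probability measure, i.e.\ a Radon measure $\mu$ with $\mu(S_x(M)) = 1$. A Keisler functional is by definition a positive linear functional $\lf$ on $\cC(S_x(M),\R)$ with $\|\lf\| = 1$. Since $\|\lf_\mu\| = \mu(S_x(M))$, the measure $\mu$ is a probability measure if and only if $\lf_\mu$ has operator norm $1$. Hence $\mu \mapsto \lf_\mu$ carries Keisler measures onto Keisler functionals, and since it is a restriction of a bijection it is injective; surjectivity onto Keisler functionals follows because any Keisler functional is in particular a positive linear functional, hence equals $\lf_\mu$ for a unique Radon measure $\mu$, which must then have total mass $1$ and so is a Keisler measure. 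Finally, I would note that the map in the Fact is written $\mu \mapsto \int\,d\mu$, which is exactly $\mu \mapsto \lf_\mu$ under our notation.

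\medskip

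There is essentially no obstacle here: the only point requiring a word of care is that $\cC(S_x(M),\R)$ under the pointwise order and uniform norm is the correct ambient space (so that the general Fact and the Riesz theorem both apply), and that the normalization conventions match --- operator norm $1$ on the functional side corresponds to total mass $1$ on the measure side, which is the content of $\|\lf_\mu\| = \mu(X)$ recorded in the discussion of the Riesz theorem. Everything else is a direct invocation of the quoted results, so the proof is a single short paragraph.
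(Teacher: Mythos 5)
Your proof is correct and matches the paper's intent: the Fact is stated as a direct restatement of the Riesz--Markov--Kakutani theorem for $X = S_x(M)$, and your verification that the norm-$1$/mass-$1$ conditions correspond under $\|\lf_\mu\| = \mu(X)$ is exactly the small amount of bookkeeping involved.
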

 
 We also recall that a Keisler functional is determined entirely by its behavior on the subspace of finitary formulas.

 \begin{proposition}\label{prop:lfcC}
Suppose $M$ is an $\cL$-structure, and $\lf_0$ is a positive  linear functional on finitary $\cL$-formulas over $M$ in  $x$. Then $\lf_0$ extends uniquely to a positive  linear functional $\lf$ on $\cC(S_x(M),\R)$ with $\|\lf\|=\|\lf_0\|$.
\end{proposition}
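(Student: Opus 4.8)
The plan is to use the density of finitary $\cL$-formulas in $\cC(S_x(M),\R)$ (from \cite[Fact 6.4]{BYU}) together with the fact that $\lf_0$ is automatically bounded on the subspace it is defined on, and then invoke the standard extension-by-continuity argument. First I would observe that $\lf_0$ is bounded: since a finitary formula $\psi(x)$ satisfies $-\|\psi\|^M_\infty\boldsymbol{1}\leq \psi\leq \|\psi\|^M_\infty\boldsymbol{1}$ (this inequality holding already among finitary formulas, as $\boldsymbol{1}$ is finitary and the comparison is witnessed pointwise on types), positivity of $\lf_0$ gives $-\|\psi\|^M_\infty\lf_0(\boldsymbol{1})\leq \lf_0(\psi)\leq \|\psi\|^M_\infty\lf_0(\boldsymbol{1})$, hence $|\lf_0(\psi)|\leq \|\lf_0(\boldsymbol{1})|\,\|\psi\|^M_\infty$. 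In particular $\|\lf_0\|\leq \lf_0(\boldsymbol{1})$, and taking $\psi=\boldsymbol{1}$ shows $\|\lf_0\|=\lf_0(\boldsymbol{1})$ (the same computation as in the Fact about positive functionals on $\cC(X,\R)$, restricted to the dense subspace).

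Next, since the finitary formulas form a dense subspace $D$ of the Banach space $\cC(S_x(M),\R)$ and $\lf_0\colon D\to\R$ is a bounded linear map into the complete space $\R$, it extends uniquely to a bounded linear functional $\lf$ on the closure of $D$, which is all of $\cC(S_x(M),\R)$; this is the standard B.L.T. (bounded linear transformation) theorem, and uniqueness is immediate from continuity since any two continuous extensions agree on the dense set $D$. Moreover the extension preserves the operator norm: $\|\lf\|=\|\lf_0\|$, because the sup defining the operator norm over the unit ball of $\cC(S_x(M),\R)$ is approximated by elements of $D$.

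It remains to check that $\lf$ is positive. Given $f\in\cC(S_x(M),\R)$ with $f\geq 0_V$, I would approximate $f$ uniformly by finitary formulas; the subtlety is that the approximants need not themselves be nonnegative. To handle this, fix $\epsilon>0$ and choose a finitary $\psi$ with $\|f-\psi\|^M_\infty<\epsilon$; then $\psi+\epsilon\boldsymbol{1}\geq \psi+(f-\psi)=f\geq 0$ wait — more carefully, $\psi \geq f-\epsilon\boldsymbol{1}\geq -\epsilon\boldsymbol{1}$, so $\psi+\epsilon\boldsymbol{1}\geq 0$ as a comparison of finitary formulas (again witnessed pointwise on $S_x(M)$, and $\boldsymbol{1}$ is finitary). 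Hence $\lf_0(\psi)+\epsilon\lf_0(\boldsymbol{1})=\lf_0(\psi+\epsilon\boldsymbol{1})\geq 0$, so $\lf(f)=\lim_{\epsilon\to 0}\lf_0(\psi)\geq \lim_{\epsilon\to 0}(-\epsilon\lf_0(\boldsymbol{1}))=0$. Thus $\lf(f)\geq 0$, establishing positivity.

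The only real obstacle is the positivity of the extension, precisely because uniform approximants of a nonnegative continuous function need not be nonnegative; the fix above (perturbing the approximant by $\epsilon\boldsymbol{1}$, which stays inside the space of finitary formulas) is the crux, and everything else is the routine B.L.T. extension together with the elementary norm identity $\|\lf\|=\lf(\boldsymbol{1})$ for positive functionals. One should also note for the norm claim that $\boldsymbol{1}\in D$, so $\lf(\boldsymbol{1})=\lf_0(\boldsymbol{1})$ and indeed $\|\lf\|=\lf(\boldsymbol{1})=\lf_0(\boldsymbol{1})=\|\lf_0\|$, consistent with the general Fact applied to the now-extended positive functional.
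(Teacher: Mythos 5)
Your argument is correct and follows essentially the same route the paper intends: the paper simply cites density of finitary $\cL$-formulas in $\cC(S_x(M),\R)$ plus a standard functional-analysis exercise from Rudin, and your write-up is exactly the unwinding of that citation (boundedness from positivity via $-\|\psi\|_\infty\boldsymbol{1}\leq\psi\leq\|\psi\|_\infty\boldsymbol{1}$, B.L.T.\ extension, and the $\epsilon\boldsymbol{1}$-perturbation to transfer positivity). The only blemish is a typo near the end of your first paragraph where $\|\lf_0(\boldsymbol{1})|$ should read $\lf_0(\boldsymbol{1})$.
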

\begin{proof}
Using the same steps as in the proof of Fact \ref{fact:easy}, one sees that $\lf_0$ is bounded, and hence continuous. Thus the claim follows from the fact that the linear subspace of finitary $\cL$-formulas is dense in $\cC(S_x(M),\R)$ (see \cite[Fact 6.4]{BYU}), together with basic facts in functional analysis (see \cite[Exercise I.I.19]{Rudin-fun}). 
\end{proof}

In light of the previous result, we will sometimes view linear functionals as maps on $\cC(S_x(M),\R)$, while in other cases as maps on the subspace of finitary $\cL$-formulas over $M$, depending on the relevant context. 

We end this section by examining ultraproducts of functionals.
Fix an infinite index set $\Sigma$, a collection $(M_s)_{s\in\Sigma}$ of $\cL$-structures, and an ultrafilter $\cU$ on $\Sigma$. Let $M$ be the metric ultraproduct $\prod_{\cU} M_s$. Let $x$ be a fixed sort, and suppose that for all $s\in\Sigma$, we have a positive linear functional $\lf_s$ on $\cC(S_x(M_s),\R)$. Assume further that the set $\{\|\lf_s\|:s\in \Sigma\}$ of norms is bounded. Given a finitary $\cL$-formula $\varphi(x,b)$ over $M$, define $\lf(\varphi(x,b))=\lim_{\cU}\lf_s(\varphi(x,b^s))$ where $(b^s)_{s\in\Sigma}$ is a representative of $b$. 

\begin{proposition}
$\lf$ induces a well-defined positive linear functional on $\cC(S_x(M),\R)$. Moreover, $\|\lf\|=\lim_{\cU}\|\lf_s\|$. 
\end{proposition}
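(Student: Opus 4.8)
The plan is to verify the conclusion in three stages: first that $\lf$ is well-defined on finitary $\cL$-formulas over $M$, then that it is a positive linear functional there with the claimed norm, and finally to invoke Proposition \ref{prop:lfcC} to extend it to all of $\cC(S_x(M),\R)$ without changing the norm. For well-definedness, suppose $(b^s)_{s\in\Sigma}$ and $(c^s)_{s\in\Sigma}$ are two representatives of the same element $b\in M^y$; I must show $\lim_\cU \lf_s(\varphi(x,b^s)) = \lim_\cU \lf_s(\varphi(x,c^s))$. The key point is that $\varphi$ comes with a modulus of uniform continuity, so for any $\epsilon>0$ there is $\eta>0$ with $|\varphi(a,b^s)-\varphi(a,c^s)|\le\epsilon$ for all $a\in M_s^x$ whenever $d(b^s,c^s)\le\eta$ in $M_s$; since $d^M(b,c)=\lim_\cU d^{M_s}(b^s,c^s)=0$, the set of $s$ with $d(b^s,c^s)\le\eta$ is in $\cU$, and on that set $\|\varphi(x,b^s)-\varphi(x,c^s)\|^{M_s}_\infty\le\epsilon$. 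Because $\lf_s$ is positive with $\|\lf_s\|=\lf_s(\boldsymbol{1})$ (the Fact quoted above applied to $S_x(M_s)$) and the norms are bounded by some $c$, positivity gives $|\lf_s(\varphi(x,b^s))-\lf_s(\varphi(x,c^s))|\le c\epsilon$ on a $\cU$-large set. Letting $\epsilon\to 0$ forces the two $\cU$-limits to agree. One should also remark that each limit exists: the sequence $(\lf_s(\varphi(x,b^s)))_s$ is bounded (by $c\|\varphi\|_\infty$, using that the formula $\varphi$ has bounded image, or more carefully by $c$ times the sup of $|\varphi|$ over its predicate intervals), so the $\cU$-limit of a bounded net of reals exists.

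Next, linearity and positivity on finitary $\cL$-formulas over $M$: linearity is immediate since $\cU$-limits are linear and each $\lf_s$ is linear, and one uses \L o\'s's theorem in the metric setting (or just the definition of the ultraproduct) to see that a finitary combination $\alpha(\varphi_1(x,b_1),\dots)$ over $M$ is represented coordinatewise by the same combination over $M_s$. For positivity, if a finitary formula $\psi(x)$ over $M$ satisfies $\psi\ge 0$, i.e. $\psi(a)\ge 0$ for all $a\in M^x$, I claim $\psi(x)\ge -\epsilon$ holds in $M_s$ for $\cU$-almost all $s$ for every $\epsilon>0$: indeed $\inf_{a\in M^x}\psi(a)\ge 0$, and the value $\inf_{a}\psi(a)$ is computed as $\lim_\cU \inf_{a\in M_s^x}\psi(a)$ (the infimum of a formula is again a definable quantity in continuous logic), so $\inf_{a\in M_s^x}\psi(a)>-\epsilon$ on a $\cU$-large set. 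On that set $\lf_s(\psi+\epsilon\boldsymbol{1})\ge 0$ by positivity of $\lf_s$, hence $\lf_s(\psi)\ge -\epsilon\|\lf_s\|\ge -c\epsilon$; taking $\cU$-limits and then $\epsilon\to 0$ gives $\lf(\psi)\ge 0$. Thus $\lf_0 := \lf\restriction(\text{finitary formulas})$ is a positive linear functional, and by Proposition \ref{prop:lfcC} it extends uniquely to a positive bounded linear functional on $\cC(S_x(M),\R)$ with $\|\lf\| = \|\lf_0\|$; I then redefine (or observe) that this extension is the $\lf$ in the statement.

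Finally, the norm computation $\|\lf\| = \lim_\cU\|\lf_s\|$. By the Fact quoted in the excerpt, $\|\lf\| = \lf(\boldsymbol{1})$ and $\|\lf_s\| = \lf_s(\boldsymbol{1})$ for each $s$, since all these functionals are positive on the relevant spaces of continuous functions on compact Hausdorff spaces. But $\boldsymbol{1}$ over $M$ is represented by the constant sequence $\boldsymbol{1}$ over the $M_s$, so by the very definition of $\lf$ on this finitary formula, $\lf(\boldsymbol{1}) = \lim_\cU \lf_s(\boldsymbol{1})$, i.e. $\|\lf\| = \lim_\cU\|\lf_s\|$. (In particular this limit exists because the norms were assumed bounded.) The main obstacle is the care needed in the positivity and well-definedness arguments: one must justify that $\inf_{a}\psi(a)$ and the uniform distance $\|\varphi(x,b^s)-\varphi(x,c^s)\|_\infty$ transfer correctly through the ultraproduct, which rests on the metric \L o\'s theorem together with the fact that uniform moduli of continuity are part of the data of $\cL$; once that is in hand, everything else is a routine $\epsilon$-chase using only positivity and the uniform bound on the $\|\lf_s\|$.
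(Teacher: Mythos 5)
Your proposal is correct and follows essentially the same approach as the paper's proof: well-definedness via the modulus of uniform continuity and the uniform bound on $\|\lf_s\|$, positivity via the ultraproduct behavior of $\inf_x$ together with an $\epsilon$-shift, and the norm via $\|\lf\|=\lf(\boldsymbol{1})$. The only cosmetic difference is that you explicitly route the extension from finitary formulas to all of $\cC(S_x(M),\R)$ through Proposition \ref{prop:lfcC}, which the paper leaves implicit; this is a reasonable clarification rather than a different argument.
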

\begin{proof}
We first show $\lf$ is well-defined. Fix a finitary $\cL$-formula $\varphi(x,y)$ and metrically $\cU$-equivalent sequences $(b^s)_{s\in\Sigma}$ and $(c^s)_{s\in\Sigma}$. We verify  $\lim_{\cU}\lf_s(\varphi(x,b^s))=\lim_{\cU}\lf_s(\varphi(x,c^s))$. Toward this end, we fix $\epsilon>0$ and show that the set 
\[
X:=\{s\in\Sigma:|\lf_s(\varphi(x,b^s))-\lf_s(\varphi(x,c^s))|\leq\epsilon\}
\]
 is in $\cU$. Let $R>0$ be a bound on $\|\lf_s\|$ for all $s\in\Sigma$. By assumption, the set 
 \[
 Y:=\{s\in\Sigma:d(b^s,c^s)<\Delta_{\varphi(x,y)}(\epsilon/R)\}
 \]
 is in $\cU$. Moreover, if $s\in Y$ then $\|\varphi(x,b^s)-\varphi(x,c^s)\|_\infty\leq\epsilon/R$, and so 
\[
|\lf_s(\varphi(x,b^s))-\lf_s(\varphi(x,c^s))|=|\lf_s(\varphi(x,b^s)-\varphi(x,c^s))|\leq R\|\varphi(x,b^s)-\varphi(x,c^s)\|_\infty\leq\epsilon.
\]
Therefore $Y\seq X$, whence $X\in\cU$.

Now, since ultralimits preserve vector space operations, it follows that $\lf$ is a linear functional. We show next that $\lf$ is positive.  Fix an $\cL$-formula $\varphi(x,b)$ over $M$ such that $\varphi(x,b)\geq 0$. By {\L}o\'{s}'s Theorem, $\lim_{\cU}\inf_x\varphi(x,b^s)\geq 0$.  Now fix $\epsilon>0$. Then the set $Z:=\{s\in\Sigma:\varphi(x,b^s)\geq \nv\epsilon\}$ is in $\cU$. If $s\in Z$ then $\varphi(x,b^s)+\epsilon\geq 0$, and so $\lf_s(\varphi(x,b^s)+\epsilon)\geq 0$, which implies
\[
\lf_s(\varphi(x,b^s))\geq\nv\lf_s(\epsilon)=\nv\epsilon\lf_s(\boldsymbol{1})=\nv\epsilon\|\lf_s\|.
\]
Therefore, $\lf(\varphi(x,b))=\lim_{\cU}\lf_s(\varphi(x,b^s))\geq\nv\epsilon\lim_{\cU}\|\lf_s\|\geq \nv\epsilon R$. Since $\epsilon>0$ was arbitrary and $R$ is fixed, it follows that $\lf(\varphi(x,b))\geq 0$. So we have shown that $\lf$ is positive. Finally, note that $\|\lf\|=\lf(\boldsymbol{1})=\lim_{\cU}\lf_s(\boldsymbol{1})=\lim_{\cU}\|\lf_s\|$. 
\end{proof}

\begin{example}
Working in the above setting, suppose also that each $M_s$ is finite. Then for a given sort $x$, we can define the ``average value functional" $\lf_s$ such that if $\varphi(x)$ is a finitary $L$-formula over $M_s$ then $\lf_s(\varphi)=\frac{1}{|M_s^x|}\sum_{a\in M_s^x}\varphi(a)$. Note that each $\lf_s$ is a Keisler functional in $x$ over $M_s$. By the previous proposition, it follows that $\lf:=\lim_{\cU}\lf_s$ is a Keisler functional on $M$, which we refer to as the \textbf{pseudofinite average value functional on $M$ in sort $x$}.   
\end{example}

We will also need the following standard fact (see \cite[Proposition 7.6]{BBHU}).

\begin{fact}\label{fact:ultrasat}
Assume $\cL$ is countable and let $\{M_s:s\in\Sigma\}$ be a countable family of $\cL$-structures. Then $\prod_{\cU}M_s$ is $\omega_1$-saturated for any nonprincipal ultrafilter $\cU$ on $\Sigma$.
\end{fact}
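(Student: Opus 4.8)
This is the continuous analogue of the classical fact that an ultraproduct over a countably incomplete ultrafilter is $\omega_1$-saturated, and the plan is to carry out the usual coordinatewise construction in the metric setting. Write $M=\prod_{\cU}M_s$. First I would record that $\cU$ is \emph{countably incomplete}: since $\Sigma$ is countable and $\cU$ is nonprincipal, no singleton lies in $\cU$, so on enumerating $\Sigma=\{s_0,s_1,\ldots\}$ and setting $Y_n=\Sigma\setminus\{s_0,\ldots,s_{n-1}\}$ one gets a decreasing chain $\Sigma=Y_0\supseteq Y_1\supseteq\cdots$ with each $Y_n\in\cU$ (being cofinite) and $\bigcap_n Y_n=\emptyset$. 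Now fix a countable $A\seq M$ and a complete type $p$ over $A$ in a variable $x$; the goal is to realize $p$ in $M$. Since $\cL$ and $A$ are countable, $S_x(A)$ is compact metrizable, and I would use this to fix a countable set $\{\theta_k(x):k\in\N\}$ of finitary $\cL_A$-formulas in $x$ that is dense in the uniform norm, so that a type $q$ over $A$ equals $p$ iff $\theta_k^q=\theta_k^p$ for all $k$. Setting $\chi_n(x)=\max_{k\le n}|\theta_k(x)-\theta_k^p|$ produces $\cL_A$-formulas with $0\le\chi_0\le\chi_1\le\cdots$ pointwise in every model, with $\chi_n^p=0$, and with $p$ the unique type over $A$ satisfying $\chi_n^q=0$ for all $n$. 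Realizing $p$ in an elementary extension $N\succeq M$ and using that the value of $\inf_x\chi_n(x)$ (a sentence with parameters in $M$) is the same in $M$ and $N$, we get $\inf_x\chi_n^M(x)=0$ for every $n$; by {\L}o\'{s}'s theorem this means $Z_n:=\{s\in\Sigma:\inf_x\chi_n^{M_s}(x)<1/n\}\in\cU$.

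The heart of the argument is the diagonalization. Put $W_0=\Sigma$ and $W_n=Y_n\cap Z_1\cap\cdots\cap Z_n$ for $n\ge 1$; this is a decreasing chain of sets in $\cU$ with $\bigcap_n W_n=\emptyset$, so for each $s\in\Sigma$ the number $n(s):=\max\{n:s\in W_n\}$ is a well-defined natural number. For $s$ with $n(s)\ge 1$, choose $c^s\in M_s^x$ with $\chi_{n(s)}^{M_s}(c^s)<1/n(s)$ (possible since $s\in W_{n(s)}\seq Z_{n(s)}$); for $s$ with $n(s)=0$, choose $c^s\in M_s^x$ arbitrarily. Let $c=[(c^s)_s]_{\cU}\in M$. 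To verify that $c$ realizes $p$ it suffices to show $\chi_n^M(c)=0$ for every $n$. Fix $n$ and $\epsilon>0$, and pick $m\ge n$ with $1/m<\epsilon$. Then every $s$ in the $\cU$-set $W_m$ satisfies $n(s)\ge m$, whence $\chi_n^{M_s}(c^s)\le\chi_{n(s)}^{M_s}(c^s)<1/n(s)\le 1/m<\epsilon$, using that $\chi_n\le\chi_{n(s)}$ as formulas. So $\{s:\chi_n^{M_s}(c^s)<\epsilon\}\in\cU$ for every $\epsilon>0$, and since $\chi_n\ge 0$ this gives $\chi_n^M(c)=\lim_{\cU}\chi_n^{M_s}(c^s)=0$ by {\L}o\'{s}'s theorem. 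As $n$ was arbitrary, $c$ realizes $p$; since $A$ and $p$ were arbitrary, $M$ is $\omega_1$-saturated.

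I expect the only genuine subtleties to be the following. First, reducing an arbitrary complete type over a countable set to a single increasing sequence $(\chi_n)$ of closed conditions — this is routine once one recalls that $S_x(A)$ is metrizable for countable $\cL$ and $A$, equivalently that there are only countably many finitary $\cL_A$-formulas up to uniform approximation. Second, and this is the real point, keeping the bookkeeping of the chain $W_n$ aligned so that the ultralimit computation for $c$ goes through: one must use the \emph{largest} index $n(s)$ with $s\in W_n$ and exploit the monotonicity $\chi_n\le\chi_{n(s)}$. This second step is where countable incompleteness — equivalently, nonprincipality of $\cU$ on the countable index set $\Sigma$ — is actually used, and without it the construction of $c$ breaks down.
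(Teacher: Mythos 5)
The paper states this as a black-box Fact with a citation to \cite[Proposition 7.6]{BBHU} rather than giving a proof, and your argument is precisely the standard continuous-logic adaptation of Keisler's diagonalization that underlies the cited result; the reduction of a complete type to a single increasing sequence of closed conditions via a countable uniformly dense set of $\cL_A$-formulas, the extraction of the $Z_n$ from {\L}o\'{s}, and the choice of $n(s)$ as the last $n$ with $s\in W_n$ exploiting monotonicity of the $\chi_n$ are all exactly the moves in the textbook proof. One small presentational point: you invoke metrizability of $S_x(A)$ to get a countable dense family of $\cL_A$-formulas, but the cleaner order of argument is the reverse (countably many nonlogical symbols and a countable dense set of connectives give countably many finitary $\cL_A$-formulas up to uniform approximation, which then witnesses both the separability of $\cC(S_x(A),\R)$ and the metrizability of $S_x(A)$); this is cosmetic and does not affect correctness.
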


\section{Keisler measures on stable formulas}\label{sec:measures}

Let $T$ be a complete $\cL$-theory, and fix a $[0,1]$-valued $\cL$-formula $\varphi(x,y)$. Since we will be working locally around $\varphi(x,y)$, there is no harm in assuming $\cL$ is countable. 

Given $r,s\in\R$, and some $\epsilon>0$, we write $r\approx_\epsilon s$ to denote $|r-s|\leq\epsilon$. We also let $r\dotminus s=\max\{r-s,0\}$. So $r\dotminus s=0$ if and only if $r\leq s$.

\begin{definition}\label{def:stable2}
Given $\delta\in [0,1]$, we say that $\varphi(x,y)$ is \textbf{$\delta$-stable (in $T$)} if for every $M\models T$, the function $\varphi\colon M^x\times M^y\to[0,1]$ is $\delta$-stable (as defined in Section \ref{sec:stable}). 

We say that $\varphi(x,y)$ is \textbf{stable} if it is $\delta$-stable for all $\delta>0$.
\end{definition}

It is not hard to show that when checking $\delta$-stability  of $\varphi(x,y)$ with respect to $T$, it suffices to consider a single $\omega$-saturated  model $M\models T$. In the next lemma, we further note that $\delta$-stability is an  ``open condition".

\begin{lemma}\label{lem:stableopen}
There is some $\delta_\varphi\in [0,1]$ such that, for any $\delta\in [0,1]$, $\varphi(x,y)$ is $\delta$-stable if and only if $\delta>\delta_\varphi$.
\end{lemma}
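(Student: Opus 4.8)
The plan is to show that the set of $\delta \in [0,1]$ for which $\varphi(x,y)$ is $\delta$-stable is an upward-closed interval that is open (relative to $[0,1]$), so that it has the form $(\delta_\varphi, 1]$ (allowing $\delta_\varphi = 1$, i.e.\ the empty set, when $\varphi$ is nowhere stable, and $\delta_\varphi$ could be thought of as $-\infty$ relative to $[0,1]$ when $\varphi$ is fully stable, though since we restrict to $\delta \in [0,1]$ the value $\delta_\varphi = 0$ covers that case only in the limiting sense; one records the cleanest statement and takes $\delta_\varphi \coloneqq \inf\{\delta : \varphi$ is $\delta$-stable$\}$, with the convention $\inf\emptyset = 1$). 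Upward-closedness is immediate from the definition: if $\delta \leq \delta'$ and there is no bad configuration with threshold $\delta'$, then a fortiori there is none with threshold $\delta' \geq \delta$ — wait, this goes the wrong way, so let me be careful. If $\varphi$ is $\delta$-stable and $\delta' \geq \delta$, then any configuration witnessing $(\N,\delta')$-instability, i.e.\ $|f(a_i,b_j) - f(a_j,b_i)| \geq \delta'$ for all $i<j$, also satisfies $|f(a_i,b_j)-f(a_j,b_i)| \geq \delta$ for all $i<j$, hence witnesses $(\N,\delta)$-instability, contradiction. So $\delta$-stability implies $\delta'$-stability for all $\delta' \geq \delta$, giving upward closure.

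The substantive point is openness, i.e.\ that if $\varphi(x,y)$ is $\delta$-stable then it is $(\delta - \eta)$-stable for some $\eta > 0$. First I would fix an $\omega$-saturated model $M \models T$; as noted in the remark preceding the lemma, $\delta$-stability of $\varphi$ can be checked on such an $M$ alone. By the reformulation recorded after Definition~\ref{def:stable}, $\varphi \colon M^x \times M^y \to [0,1]$ is $\delta$-stable iff it is $([k],\delta)$-stable for some $k$ (here using boundedness of the image together with saturation of $M$, exactly the ``certain contexts'' clause mentioned in the text, which applies to $(M,M,\varphi)$ when $M$ is saturated). So there is a fixed $k = k(\delta)$ such that $M$ contains no $a_1,\dots,a_k \in M^x$ and $b_1,\dots,b_k \in M^y$ with $|\varphi(a_i,b_j) - \varphi(a_j,b_i)| \geq \delta$ for all $i<j$.

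Now the key step: I claim that for this \emph{same} $k$, there is $\eta > 0$ with $M$ omitting every configuration witnessing $([k], \delta-\eta)$-instability. Suppose not. Then for every $n$ there exist tuples $\abar^{(n)} = (a^{(n)}_1,\dots,a^{(n)}_k)$ and $\bbar^{(n)} = (b^{(n)}_1,\dots,b^{(n)}_k)$ with $|\varphi(a^{(n)}_i,b^{(n)}_j) - \varphi(a^{(n)}_j,b^{(n)}_i)| \geq \delta - \tfrac1n$ for all $i<j$. Consider the set of continuous conditions $\Gamma(\xbar,\ybar)$ (in the $2k$-tuple of variables $x_1,\dots,x_k,y_1,\dots,y_k$) asserting $|\varphi(x_i,y_j) - \varphi(x_j,y_i)| \geq \delta - \tfrac1n$ for all $i<j$ and all $n$; equivalently, the single condition $\inf_{i<j}\,|\varphi(x_i,y_j)-\varphi(x_j,y_i)| \geq \delta$, which in continuous logic is the countable set of conditions $\big(\delta - \tfrac1n\big) \dotminus |\varphi(x_i,y_j)-\varphi(x_j,y_i)| = 0$ for $i<j$, $n \geq 1$. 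Each finite subset of $\Gamma$ is satisfied in $M$ by a suitable $(\abar^{(n)},\bbar^{(n)})$ with $n$ large, so $\Gamma$ is finitely satisfiable in $M$; by $\omega$-saturation (a $2k$-tuple of variables, and $\cL$ countable so $\Gamma$ is over $\emptyset$ hence certainly over a small set), $\Gamma$ is realized in $M$ by some $(\abar,\bbar)$. That realization gives $|\varphi(a_i,b_j) - \varphi(a_j,b_i)| \geq \delta$ for all $i<j$ — contradicting $([k],\delta)$-stability of $M$. Hence such an $\eta$ exists, $M$ is $(\delta-\eta)$-stable, and therefore $\varphi(x,y)$ is $(\delta-\eta)$-stable in $T$.

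Finishing up: set $\delta_\varphi = \inf\{\delta \in [0,1] : \varphi$ is $\delta$-stable$\}$ (with $\inf\emptyset = 1$). Upward-closedness shows $(\delta_\varphi,1] \subseteq \{\delta : \varphi$ is $\delta$-stable$\}$, and the openness argument shows the set of stable $\delta$ contains no point equal to its own infimum from the right unless that point is $0$ — more precisely, if $\delta_\varphi$ itself were stable then by openness some $\delta_\varphi - \eta$ would be stable, contradicting the definition of $\delta_\varphi$ as an infimum (when $\delta_\varphi > 0$; and the case $\delta_\varphi = 0$ with $0$ stable is harmless). Either way $\{\delta : \varphi$ is $\delta$-stable$\} = (\delta_\varphi, 1]$, which is exactly the claim. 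I expect the main obstacle to be the bookkeeping in the compactness/saturation step — in particular making sure the parameter $k$ stays fixed as $\eta \to 0$ (this is what lets a single saturated model detect the whole phenomenon) and correctly phrasing the ``$\inf$'' over $i<j$ as a genuine set of continuous-logic conditions so that $\omega$-saturation in finitely many variables applies; once that is set up, everything else is routine.
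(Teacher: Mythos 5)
Your proof is correct and follows essentially the same route as the paper's: first use compactness/saturation to replace $(\N,\delta)$-stability with $([k],\delta)$-stability for a fixed $k$, then use compactness again (which you package as $\omega$-saturation of a fixed model applied to a partial type $\Gamma$, while the paper packages it as $\inf_{\xbar,\ybar}\theta > 0$ for a single finitary formula $\theta$) to open up a margin $\eta > 0$. The two packagings are equivalent for a complete theory, and your treatment of the corner cases and upward-closedness is also fine.
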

\begin{proof}
Suppose $\varphi(x,y)$ is $\delta$-stable for some $\delta>0$. We find some $\epsilon>0$ such that $\varphi(x,y)$ is $\delta'$-stable  for all $\delta'\in(\delta-\epsilon,\delta)$. This suffices to prove the lemma (take $\delta_\varphi=1$ if $\varphi(x,y)$ is not $\delta$-stable for any $\delta\in[0,1]$). By compactness, there is some $k\geq 1$ such that $\varphi^M$ is $(k,\delta)$-stable for all $M\models T$. Define the formula
\[
\theta(x_1,\ldots,x_k,y_1,\ldots,y_k)\coloneqq\max_{i<j}(\delta\dotminus |\varphi(x_i,y_j)-\varphi(x_j,y_i)|).
\]
Then for any $M\models T$ and $a_1,\ldots,a_k\in M^x$, $b_1\ldots,b_k\in M^y$, we have $\theta(\abar,\bbar)>0$. By compactness, there is some $\epsilon>0$ such that $\inf^M_{\xbar,\ybar}\theta(\xbar,\ybar)=\epsilon$ for all $M\models T$. Unpacking this, it follows immediately that $\varphi^M$ is $(k,\delta')$-stable for any $M\models T$ and $\delta'\in (\delta-\epsilon,\delta)$. Therefore $\varphi(x,y)$ is $\delta'$-stable for any $\delta'\in (\delta-\epsilon,\delta)$.
\end{proof}

Let $\varphi^*(y,x)$ denote the same formula $\varphi(x,y)$, but with the roles of object and parameter variables exchanged.

\begin{definition}
A \textbf{min-max $\varphi^*$-generated formula} is a formula of the form
\[
\zeta(y,\xbar)=\min_{1\leq i\leq m}\max_{1\leq j\leq n_i}\varphi(x_{i,j},y)
\]
for some $m,n_1,\ldots,n_m\geq 1$.
\end{definition}

 The next result  says that if $\varphi(x,y)$ is $\delta$-stable, then $\varphi$-types are ``uniformly approximately  $\varphi^*$-definable". See \cite[Lemma 7.4]{BYU} for details. 

\begin{lemma}\label{lem:UDT}
Assume $\varphi(x,y)$ is $\delta$-stable. Then there is a min-max $\varphi^*$-generated formula $\zeta^\delta_\varphi(y,\xbar)$ such that, for any $M\models T$ and $p\in S_\varphi(M)$, there is some $\cbar\in M^{\xbar}$ such that for all $b\in M^y$,  $\varphi(p,b)\approx_\delta \zeta^\delta_\varphi(b,\cbar)$.
\end{lemma}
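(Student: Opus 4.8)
The statement to prove is Lemma~\ref{lem:UDT}: if $\varphi(x,y)$ is $\delta$-stable, there is a min-max $\varphi^*$-generated formula $\zeta^\delta_\varphi(y,\xbar)$ such that every $\varphi$-type $p$ over every $M\models T$ is $\delta$-approximated by some instance $\zeta^\delta_\varphi(b,\cbar)$.

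Here is my plan.

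\medskip

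\textbf{Approach.} The plan is to follow the classical stable local approximation strategy adapted to continuous logic, extracting uniformity via compactness. The heart of the matter is an `` $\varepsilon$-rank'' or alternation argument: $\delta$-stability forbids long $\delta$-alternating sequences, and a finite bound on such alternations lets one approximate the value $\varphi(p,b)$ by a min-max (majority-vote–type) expression in finitely many parameters realizing $p$ approximately. Since \cite[Lemma 7.4]{BYU} is cited, I would essentially appeal to that, but let me sketch the self-contained argument.

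\medskip

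\textbf{Key steps.} First I would fix a sufficiently saturated $M\models T$ and a type $p\in S_\varphi(M)$, and aim to show: there exist $\cbar=(x_{i,j})$ from $M$ and fixed $m, n_1,\dots,n_m$ (independent of $M$ and $p$) so that $\varphi(p,b)\approx_\delta \min_i\max_j \varphi(c_{i,j},b)$ for all $b\in M^y$. The idea: by $\delta$-stability there is, via compactness, a uniform bound $k$ on the length of any $\delta$-alternating configuration (as in the proof of Lemma~\ref{lem:stableopen}). Pick $a\models p$ in a bigger model. One builds a finite ``$\delta$-approximating scheme'': using the finite alternation bound, the set of $b$ with $\varphi(a,b)$ large versus small is captured, up to $\delta$, by a Boolean (hence min-max) combination of conditions $\varphi(c,b)\gtrsim r$ for finitely many $c\in M$ approximately realizing $p$ and finitely many thresholds $r$ from a $\delta$-net of $[0,1]$. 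Discretizing $[0,1]$ into $\lceil 1/\delta\rceil$ levels reduces the real-valued approximation to finitely many ``cuts,'' each of which is a stable $\{0,1\}$-type situation handled by the classical definability-of-types argument (majority over a large enough indiscernible-free finite set of realizations). Assembling the level-cut approximants by min-max yields $\zeta$. Second, I would verify the resulting $\zeta$ is genuinely min-max $\varphi^*$-generated and that the bounds $m,n_i$ depend only on $k$ and $\delta$ (hence only on $\varphi$ and $\delta$), not on $M$ or $p$. Third, I would promote ``works in one saturated model for each type'' to ``works in all models'': the existence of an appropriate $\cbar$ for a given $p$ over a given $M$ is a first-order (continuous) condition on the ``formula'' $\zeta$ of bounded complexity, so one runs a compactness argument over the space of candidate $\zeta$'s of the fixed shape — or, more simply, one notes that $\delta$-approximate $\varphi^*$-definability descends to arbitrary $M$ by taking an $\omega$-saturated elementary extension, approximating there, and restricting parameters (using that $S_\varphi(M)$ embeds suitably and Proposition~\ref{prop:lip} to control the metric behavior).

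\medskip

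\textbf{Main obstacle.} The delicate point is the \emph{uniformity of the shape} of $\zeta$: one needs a single $\zeta^\delta_\varphi$ with fixed $m, n_1,\dots,n_m$ working for \emph{all} $M\models T$ and \emph{all} $p\in S_\varphi(M)$ simultaneously, whereas the naive alternation/majority argument a priori produces a $\zeta$ depending on the type. Overcoming this requires the compactness packaging: one shows that for each fixed $k$ (the alternation bound), there is a finite list of possible min-max schemes of bounded size, and the failure of all of them to $\delta$-approximate some type would produce, by compactness, a configuration violating the $(k,\delta)$-stability extracted from Lemma~\ref{lem:stableopen}. Since the excerpt explicitly points to \cite[Lemma 7.4]{BYU}, the cleanest writeup simply invokes that lemma; if one wants it self-contained, the alternation-rank bookkeeping together with the $\delta$-net discretization of $[0,1]$ is the technical core, and Proposition~\ref{prop:lip} is exactly what guarantees the assembled $\zeta$ is $1$-Lipschitz in $q$ as needed downstream.
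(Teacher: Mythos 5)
The paper's proof of Lemma~\ref{lem:UDT} is literally just the citation to \cite[Lemma 7.4]{BYU}, and your proposal invokes that same citation as the primary route, so on that front you match the paper exactly.

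On your self-contained sketch, the step that deserves scrutiny is the $\delta$-net discretization. You describe each level cut as ``a stable $\{0,1\}$-type situation handled by the classical definability-of-types argument,'' but that is not quite right as stated: $\delta$-stability of $\varphi$ does not make the sharp cut relation $E_r(a,b):=[\varphi(a,b)\geq r]$ stable in the classical sense. For example, $\varphi(a_i,b_j)=r$ for $i\leq j$ and $\varphi(a_i,b_j)=r-\delta/2$ for $i>j$ gives $E_r$ the order property while $|\varphi(a_i,b_j)-\varphi(a_j,b_i)|=\delta/2<\delta$ never witnesses a $\delta$-alternation. What \emph{is} stable is the gapped (partial) relation ``$\varphi\geq r+\delta$ versus $\varphi\leq r$'' --- precisely the ${}^*\!(k,\delta)$-stability notion of Definition~\ref{def:stable-alt}, with the Ramsey bookkeeping relating it to $(k,\delta)$-stability appearing in Proposition~\ref{prop:twostables}. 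A correct self-contained argument therefore needs to run the majority vote on cuts-with-gaps and track how the gap $\delta$ propagates through the assembly, or --- closer to what \cite{BYU} actually does --- skip the discretization entirely and run a median/alternation argument directly on the real-valued $\varphi$: the median of $2N+1$ values $\varphi(c_i,b)$ is already the min-max expression $\min_{|S|=N+1}\max_{i\in S}\varphi(c_i,b)$, and one bounds the number of $\delta$-jumps in a continuous Morley sequence to show the median lands within $\delta$ of $\varphi(p,b)$.
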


It is easy to see that if $\varphi(x,y)$ is $\delta$-stable, then so is $\varphi^*(y,x)$, and thus by the previous lemma we obtain a min-max $\varphi$-generated formula $\zeta^{\delta}_{\varphi^*}(x,\ybar)$ satisfying the analogous conclusion for all types $q\in S_{\varphi^*}(M)$.

For the rest of this section, we fix a model $M\models T$. We now recall the topometric space structure on $S_\varphi(M)$, as well as the associated Cantor-Bendixson ranks, as defined in \cite{BYtop,BYU}. Recall that a \emph{topometric space} is a pair $(X,d)$ where $X$ is a  Hausdorff space and $d$ is a metric on $X$ satisfying the following properties.
\begin{enumerate}[$(i)$]
\item The metric refines the topology, i.e., for every open $V\seq X$ and every $p\in V$, there is some $\epsilon>0$ such that $\{q\in X:d(p,q)<\epsilon\}\seq V$.
\item The metric function  is lower semi-continuous, i.e., for any $\epsilon>0$, the set $\{(p,q)\in X\times X:d(p,q)\leq\epsilon\}$ is closed in $X\times X$.
\end{enumerate}

\begin{definition}\label{def:top}
The \textbf{$d$-metric} on $S_\varphi(M)$ is $d(p,q)=\sup_{b\in M^y}|\varphi(p,b)-\varphi(q,b)|$.
\end{definition}

Despite our use of the terminology ``$d$-metric", we note that this metric on $S_\varphi(M)$ can be quite different from what is usually called the $d$-metric on the space of \emph{complete} types (see \cite[Section 4.3]{BYU}). For example, if the metric on $M$ is discrete then so is the $d$-metric on complete types, although what we call the $d$-metric on $S_\varphi(M)$ may not be discrete (for a given $\varphi$).

\begin{fact}\label{fact:topo}
$(S_\varphi(M),d)$ is a (compact) topometric space.
\end{fact}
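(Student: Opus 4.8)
The statement to prove is Fact \ref{fact:topo}: that $(S_\varphi(M),d)$ is a compact topometric space, where $d(p,q)=\sup_{b\in M^y}|\varphi(p,b)-\varphi(q,b)|$.

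\medskip

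The plan is to verify directly the three requirements: that $d$ is a genuine metric, that it refines the topology on $S_\varphi(M)$, and that it is lower semi-continuous; compactness of $S_\varphi(M)$ has already been recorded.

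\medskip

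First I would check that $d$ is a metric. Symmetry and the triangle inequality are immediate from the corresponding properties of $|{\cdot}|$ on $\R$ and the fact that a supremum of a sum is at most the sum of the suprema. For $d(p,q)=0\Rightarrow p=q$, recall from the preliminaries that a type $p\in S_\varphi(M)$ is uniquely determined by the function $b\mapsto\varphi(p,b)$ on $M^y$; so if $\sup_b|\varphi(p,b)-\varphi(q,b)|=0$ then these two functions agree and $p=q$. Finiteness of $d$ is clear since $\varphi$ is $[0,1]$-valued, so in fact $d\le 1$.

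\medskip

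Next, refinement of the topology. Given an open $V\seq S_\varphi(M)$ and $p\in V$, by the description of the basic open sets I can find finitely many $b_1,\dots,b_n\in M^y$ and bounded open intervals $U_1,\dots,U_n\seq\R$ with $p\in\bigcap_{i=1}^n[\varphi(x,b_i)\in U_i]\seq V$. Choose $\epsilon>0$ small enough that the $\epsilon$-ball (in $\R$) around each $\varphi(p,b_i)$ is contained in $U_i$. Then if $d(p,q)<\epsilon$ we get $|\varphi(q,b_i)-\varphi(p,b_i)|<\epsilon$ for every $i$, hence $\varphi(q,b_i)\in U_i$ for all $i$, so $q\in V$. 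Thus the $d$-ball of radius $\epsilon$ about $p$ lies in $V$.

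\medskip

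For lower semi-continuity, fix $\epsilon>0$ and consider $Z_\epsilon=\{(p,q)\in S_\varphi(M)^2:d(p,q)\le\epsilon\}$. For each fixed $b\in M^y$, the map $(p,q)\mapsto\varphi(p,b)-\varphi(q,b)$ is continuous on $S_\varphi(M)^2$ (each coordinate map $p\mapsto\varphi(p,b)$ is a sub-basic continuous function), so $\{(p,q):|\varphi(p,b)-\varphi(q,b)|\le\epsilon\}$ is closed. Now $d(p,q)\le\epsilon$ iff $|\varphi(p,b)-\varphi(q,b)|\le\epsilon$ for \emph{all} $b\in M^y$, so $Z_\epsilon=\bigcap_{b\in M^y}\{(p,q):|\varphi(p,b)-\varphi(q,b)|\le\epsilon\}$ is an intersection of closed sets, hence closed. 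Finally, compactness of $S_\varphi(M)$ was already noted, completing the verification that $(S_\varphi(M),d)$ is a compact topometric space. I do not anticipate a serious obstacle here; the only point requiring a little care is making sure the separation axiom for $d$ uses the fact that types are determined by their $\varphi$-values, and that refinement exploits the explicit basis for the topology, both of which are recorded earlier in the excerpt.
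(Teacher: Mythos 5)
Your proposal is correct and matches the paper's approach: the paper simply cites \cite{BYU} and \cite{BYtop} while noting that properties $(i)$ and $(ii)$ of the topometric axioms ``are also easy to check directly,'' and you carry out exactly that direct verification. The only thing worth flagging is that the separation axiom for $d$ does indeed rest on the fact, recorded earlier in the paper, that a $\varphi$-type is determined by the function $b\mapsto\varphi(p,b)$ --- you identified and used this correctly.
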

\begin{proof}
See \cite[Fact 6.2]{BYU} and \cite{BYtop} (especially Definition 1.2 and the remarks after Lemma 1.9). Given the definition of the metric on $S_\varphi(M)$, properties $(i)$ and $(ii)$ are also easy to check directly. 
\end{proof}

The following is a straightforward exercise that we will use later.

\begin{proposition}\label{prop:lip}
Suppose $\zeta(y,\xbar)$ is a min-max $\varphi^*$-generated formula. Then $|\zeta(q,\cbar)-\zeta(q',\cbar)|\leq d(q,q')$ for any $\cbar\in M^{\xbar}$ and $q,q'\in S_{\varphi^*}(M)$.
\end{proposition}

Given $C\seq S_\varphi(M)$, let $\diam(C)=\sup\{d(p,q):p,q\in C\}$.

\begin{definition}\label{def:CB}
Fix $\delta>0$.
\begin{enumerate}[$(1)$]
\item We define closed sets $X_{\delta,\alpha}\seq S_\varphi(M)$, for $\alpha$ an ordinal. Let $X_{\delta,0}=S_\varphi(M)$ and, for a limit ordinal $\alpha$, set $X_{\delta,\alpha}=\bigcap_{\beta<\alpha}X_{\delta,\beta}$. Finally, set
\[
X_{\delta,\alpha+1}=\bigcap\{F\seq X_{\delta,\alpha}:\text{$F$ is closed and $\diam(X_{\delta,\alpha}\backslash F)\leq\delta$}\}.
\]
\item Fix  a nonempty closed set $C\seq X$. The \textbf{$\delta$-Cantor-Bendixson rank of $C$} is 
\[
\CB_\delta(C)\coloneqq \sup\{\alpha:X_{\delta,\alpha}\cap C\neq\emptyset\}\in \text{Ord}\cup\{\infty\}.
\]
If $\CB_\delta(C)=\alpha<\infty$, then define $\CBm_\delta(C)\coloneqq C\cap X_{\delta,\alpha}$.
\end{enumerate}
\end{definition}

The previous definition also makes sense when $\delta=0$, and yields the usual Cantor-Bendixson rank of a topological space. However, we will not use this case.

\begin{proposition}\label{prop:BYU}
Fix $\delta>0$ and suppose $C\seq S_\varphi(M)$ is closed and nonempty.
\begin{enumerate}[$(a)$]
\item $\CBm_\delta(C)$ is closed and nonempty. 
\item If $\CB_\delta(C)<\infty$, and $D\seq C$ is a nonempty closed set disjoint from $\CBm_\delta(C)$, then  $\CB_\delta(D)<\CB_\delta(C)$.
\item If $\CB_\delta(C)<\infty$ then $\CBm_\delta(C)$ admits a finite open cover $U_1,\ldots,U_n$ such that $\diam(U_i\cap \CBm_\delta(C))\leq\delta$ for all $1\leq i\leq n$.
\end{enumerate}
\end{proposition}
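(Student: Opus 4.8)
The plan is to establish the three parts of Proposition~\ref{prop:BYU} as a package, mirroring the classical Cantor--Bendixson argument but tracking the metric $d$ throughout. First I would record the monotonicity and continuity properties of the transfinite sequence $(X_{\delta,\alpha})$: it is decreasing, continuous at limits by fiat, and since $S_\varphi(M)$ is compact (Fact~\ref{fact:topo}) and the $X_{\delta,\alpha}$ are closed, there must be an ordinal $\gamma$ (of cardinality at most that of the topology) where the sequence stabilizes, so $X_{\delta,\gamma}=X_{\delta,\gamma+1}$. The key auxiliary observation is that $X_{\delta,\alpha+1}$, being the intersection of all closed $F\seq X_{\delta,\alpha}$ with $\diam(X_{\delta,\alpha}\setminus F)\leq\delta$, satisfies $\diam(X_{\delta,\alpha}\setminus X_{\delta,\alpha+1})\leq\delta$ --- this requires that $d$ be lower semi-continuous, which is exactly property $(ii)$ of a topometric space: if $p,q\in X_{\delta,\alpha}\setminus X_{\delta,\alpha+1}$ with $d(p,q)>\delta$, one of them, say $p$, lies outside some witnessing $F$, and then one shows $q$ cannot lie in $X_{\delta,\alpha+1}$ either, since $F\cup\{q\}$ (or rather $F$ with a small metric ball around $q$ removed — using lower semi-continuity to keep it closed) is another witness excluding $p$; a cleaner route is to note that the union over a chain of witnesses has the property that removing it leaves diameter $\le\delta$, so the intersection $X_{\delta,\alpha+1}$ is itself essentially the ``largest'' witness up to the diameter bound. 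I would cite \cite{BYtop,BYU} for the precise form of this argument rather than reproduce it.

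For part $(a)$: set $\alpha=\CB_\delta(C)$. If $\alpha=\infty$, i.e.\ $X_{\delta,\beta}\cap C\neq\emptyset$ for all $\beta$, then by the stabilization above $X_{\delta,\gamma}\cap C=\CBm_\delta$-type set is nonempty; but when $\CB_\delta(C)<\infty$ the supremum is attained because the sequence $X_{\delta,\beta}\cap C$ is a decreasing chain of closed subsets of the compact space $C$, so by the finite intersection property the intersection $\bigcap_\beta (X_{\delta,\beta}\cap C)$ is nonempty, and this intersection equals $X_{\delta,\alpha}\cap C=\CBm_\delta(C)$ for $\alpha$ large enough (once the global sequence stabilizes); closedness is immediate as an intersection of closed sets. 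Strictly, I need the supremum in the definition of $\CB_\delta$ to be attained whenever it is an ordinal, which follows from compactness exactly as stated. For part $(b)$: suppose $\CB_\delta(C)=\alpha<\infty$ and $D\seq C$ is closed, nonempty, disjoint from $\CBm_\delta(C)=C\cap X_{\delta,\alpha}$. Then $D\cap X_{\delta,\alpha}=\emptyset$, and since $D$ is compact and the $X_{\delta,\beta}\cap D$ form a decreasing chain of closed sets with empty intersection, some $X_{\delta,\beta}\cap D=\emptyset$ already for $\beta<\alpha$ (actually I want the least such $\beta$; if $\beta$ were a limit this contradicts minimality, and if $\beta=\beta'+1$ then $X_{\delta,\beta'}\cap D\neq\emptyset$, giving $\CB_\delta(D)\geq\beta'$, but I need $\CB_\delta(D)<\alpha$, which holds since $\CB_\delta(D)=\sup\{\xi:X_{\delta,\xi}\cap D\neq\emptyset\}\leq\beta'<\alpha$). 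I expect part $(b)$ to be the most delicate bookkeeping, since one has to be careful that the supremum defining $\CB_\delta(D)$ is genuinely less than $\alpha$ and not merely $\le\alpha$.

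For part $(c)$: by part $(a)$, $K:=\CBm_\delta(C)=C\cap X_{\delta,\alpha}$ is closed, hence compact, and nonempty. By the stabilization property, $X_{\delta,\alpha}=X_{\delta,\alpha+1}$, so in particular $\diam\bigl(X_{\delta,\alpha}\setminus F\bigr)\le\delta$ fails to force $F$ to drop below $X_{\delta,\alpha}$ --- rephrased, for every $p\in X_{\delta,\alpha}$, since $X_{\delta,\alpha}$ is the intersection of the witnessing $F$'s, $p$ lies in every such $F$. The usual finishing move: for each $p\in K$, consider the open (in $S_\varphi(M)$, hence relatively open in $K$) metric ball $B(p,\delta/2)=\{q:d(p,q)<\delta/2\}$; here I use property $(i)$, that the metric refines the topology, so actually I should instead work with genuine topological open sets. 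The right statement is: since $X_{\delta,\alpha}=X_{\delta,\alpha+1}$, no proper closed subset $F\subsetneq X_{\delta,\alpha}$ with small complement exists, and a standard argument (again in \cite{BYtop}) then shows each point of $K$ has a topological open neighborhood meeting $K$ in a set of $d$-diameter $\le\delta$; compactness of $K$ extracts a finite subcover $U_1,\dots,U_n$ with $\diam(U_i\cap K)\le\delta$. Concretely: if some $p\in K$ had no such neighborhood, then every open neighborhood of $p$ would meet $K$ in diameter $>\delta$, and one could build a closed $F\subsetneq X_{\delta,\alpha}$ (remove a small open neighborhood of $p$, taking its closure's complement) with $\diam(X_{\delta,\alpha}\setminus F)\le\delta$ after all — using lower semi-continuity to control the diameter of the removed piece — contradicting $X_{\delta,\alpha+1}=X_{\delta,\alpha}$. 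I would present this contrapositive carefully, flag that the ``small neighborhood'' must be chosen so that its closure still has small complement-diameter (where lower semi-continuity of $d$ is the load-bearing hypothesis), and cite \cite[Lemma~1.9 and the following remarks]{BYtop} together with \cite[Fact~6.2]{BYU} for the parts that are verbatim from the topometric-space literature.
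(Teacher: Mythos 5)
Your parts $(a)$ and $(b)$ are essentially right (the paper itself only says these ``follow easily from the definitions''), though even there your write-up conflates $\CB_\delta(C)$ with the stabilization ordinal of the ambient sequence: the full intersection $\bigcap_\beta (X_{\delta,\beta}\cap C)$ is \emph{empty} when $\CB_\delta(C)=\alpha<\infty$, since $X_{\delta,\alpha+1}\cap C=\emptyset$, so the finite intersection property must be applied only to the nonempty sets $X_{\delta,\beta}\cap C$ for $\beta<\alpha$ when $\alpha$ is a limit; as written, the family to which you apply FIP contains empty sets. This same conflation becomes a genuine error in part $(c)$: you assert ``by the stabilization property, $X_{\delta,\alpha}=X_{\delta,\alpha+1}$'' for $\alpha=\CB_\delta(C)$, but this is false --- indeed impossible --- in the situation at hand, because $\CBm_\delta(C)=C\cap X_{\delta,\alpha}\neq\emptyset$ together with $X_{\delta,\alpha}=X_{\delta,\alpha+1}$ would give $C\cap X_{\delta,\alpha+1}\neq\emptyset$ and hence $\CB_\delta(C)\geq\alpha+1$ (in fact $=\infty$). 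The whole argument for $(c)$ is then built on this false premise, and moreover runs backwards: you try to deduce the small neighborhoods from the \emph{nonexistence} of proper witnesses $F$, whereas what makes $(c)$ work is precisely that every point of $\CBm_\delta(C)$ is excluded by \emph{some} witness. Your ``key auxiliary observation'' that $\diam(X_{\delta,\alpha}\setminus X_{\delta,\alpha+1})\leq\delta$ is also false: already in the classical case (discrete metric, $\delta<1$) the set removed at each stage is the set of isolated points, which can have diameter $1$; a union of sets of diameter $\leq\delta$ need not have diameter $\leq\delta$, and lower semi-continuity does not rescue this.

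The correct argument for $(c)$ is shorter and needs neither stabilization nor lower semi-continuity. Since $\CB_\delta(C)=\alpha$, we have $C\cap X_{\delta,\alpha+1}=\emptyset$, so every $p\in\CBm_\delta(C)$ lies outside some closed $F_p\seq X_{\delta,\alpha}$ with $\diam(X_{\delta,\alpha}\setminus F_p)\leq\delta$. The sets $U_p\coloneqq S_\varphi(M)\setminus F_p$ are open, contain $p$, and satisfy $U_p\cap \CBm_\delta(C)\seq X_{\delta,\alpha}\setminus F_p$, hence $\diam(U_p\cap\CBm_\delta(C))\leq\delta$. By compactness of the closed set $\CBm_\delta(C)$, finitely many $U_{p_1},\ldots,U_{p_n}$ cover it, which is exactly the conclusion of $(c)$. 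Note also that, contrary to your claim, one genuinely needs finitely many sets here (one cannot expect a single one), which is another symptom that the false diameter bound on $X_{\delta,\alpha}\setminus X_{\delta,\alpha+1}$ was steering the argument wrong.
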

\begin{proof}
These follow easily from the definitions. See also the remarks following \cite[Definition 7.9]{BYU}. 
\end{proof}

The following is a ``$\delta$-local" analogue of \cite[Proposition 7.11]{BYU}. 

\begin{proposition}\label{prop:CBstable}
If $\varphi(x,y)$ is $\delta$-stable then  $\CB_{2\delta}(S_\varphi(M))<\infty$.
\end{proposition}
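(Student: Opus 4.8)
The goal is to show that the $2\delta$-Cantor--Bendixson rank of $S_\varphi(M)$ is bounded, i.e.\ that the process $X_{2\delta,\alpha}$ stabilizes to $\emptyset$ at some ordinal stage. The plan is to argue by contradiction: if $\CB_{2\delta}(S_\varphi(M))=\infty$, then $X_{2\delta,\alpha}\neq\emptyset$ for all ordinals $\alpha$, and in particular the derived-set process strictly decreases along an initial segment of the ordinals of length exceeding any prescribed cardinal. I would use this to build a long sequence of types witnessing an order of length $k$ in the sense of Definition~\ref{def:stable}, contradicting $(\N,2\delta)$-stability (more precisely $(k,2\delta)$-stability, which by compactness, cf.\ the proof of Lemma~\ref{lem:stableopen}, is what $2\delta$-stability gives us for a fixed finite $k$).

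\textbf{Key steps.} First, unwind the definition of $X_{2\delta,\alpha+1}$: a type $p\in X_{2\delta,\alpha}$ survives into $X_{2\delta,\alpha+1}$ exactly when every relatively open neighborhood of $p$ in $X_{2\delta,\alpha}$ has $d$-diameter $>2\delta$ — equivalently, $p$ is not $2\delta$-isolated in $X_{2\delta,\alpha}$. So if the rank is $\infty$, then for every ordinal $\alpha$ we may pick $p_\alpha\in X_{2\delta,\alpha}\setminus X_{2\delta,\alpha+1}$, i.e.\ a type that is $2\delta$-isolated in $X_{2\delta,\alpha}$ by some basic open set $V_\alpha$ with $\diam(V_\alpha\cap X_{2\delta,\alpha})\le 2\delta$. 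Second, here is the crux: the sets $X_{2\delta,\alpha}$ form a strictly decreasing chain of closed sets as long as they remain nonempty (strictness follows because $p_\alpha\in X_{2\delta,\alpha}$ but $p_\alpha\notin X_{2\delta,\alpha+1}$). A strictly decreasing chain of closed subsets of the compact space $S_\varphi(M)$ must have countable length — no, that is false in general, so instead the right move is: by Lemma~\ref{lem:UDT}, $\varphi$-types over $M$ are uniformly $\delta$-approximated by instances of the min-max $\varphi^*$-generated formula $\zeta^\delta_\varphi(y,\xbar)$, and there are only boundedly many $\varphi$-types up to $d$-distance $2\delta$ governed by this finitely-many-parameter formula. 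Concretely, I would extract from a would-be transfinite chain of $2\delta$-separated ``witnesses'' a half-graph-like configuration.

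\textbf{The cleaner route, which I would actually take.} Rather than a direct ordinal-counting argument, I would mimic the classical proof that stable implies finite CB-rank via an explicit tree/order construction. Suppose $\CB_{2\delta}(S_\varphi(M))\geq \alpha$ for all $\alpha$; then in particular one can iterate $k$ times (for any fixed $k$) the extraction: there is a nested sequence $C_0\supsetneq C_1\supsetneq\cdots$ of nonempty closed sets, each $C_{i+1}$ a nonempty closed subset of $C_i$ disjoint from $\CBm_{2\delta}(C_i)$ (using Proposition~\ref{prop:BYU}(b) to keep the rank dropping is the wrong direction — I want it to stay infinite, so instead I use that $X_{2\delta,i}$ itself has diameter-$>2\delta$ open pieces at every stage). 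From each stage I harvest two types $p_i,q_i$ at $d$-distance $>2\delta$ lying in $X_{2\delta,i}$, realized by elements of $M^x$, together with a parameter $b_i\in M^y$ witnessing $|\varphi(p_i,b_i)-\varphi(q_i,b_i)|>2\delta$; and I arrange, by always choosing $b_i$ to also lie in the basic open sets cutting out $X_{2\delta,i}$ from $X_{2\delta,0}$, that for $j>i$ the types $p_j,q_j$ agree with a common value on $b_i$ up to error $<2\delta$. Pulling back to elements $a_i\in M^x$ realizing (approximately) these types, and interleaving the $a$'s and $b$'s appropriately, yields $a_1,\dots,a_k\in M^x$ and $b_1,\dots,b_k\in M^y$ with $|\varphi(a_i,b_j)-\varphi(a_j,b_i)|\geq 2\delta$ for all $i<j$ — contradicting $(k,2\delta)$-stability once $k$ is taken past the compactness bound.

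\textbf{Main obstacle.} The delicate point is the bookkeeping in the last step: ensuring that the order relation on indices is genuinely anti-symmetrized, i.e.\ that the ``split'' witnessed at stage $i$ is invisible (below the $2\delta$ threshold) at all later stages $j>i$, so that the matrix $|\varphi(a_i,b_j)-\varphi(a_j,b_i)|$ is large only on one side. This is exactly where Lemma~\ref{lem:UDT} (uniform approximate $\varphi^*$-definability of types) does the real work — it bounds how much ``new behavior'' a type can exhibit, which is what forces the CB-process to terminate. Making the $\delta$ versus $2\delta$ accounting line up (the split is $>2\delta$, the approximations cost $\delta$ on each side, leaving exactly $\geq 2\delta-2\delta$... which is not enough, so one must instead harvest splits of size $>2\delta$ and absorb a $<\delta$ error, or note diameters are $\le 2\delta$ so the complementary separation is strictly larger) is the detail I would be most careful about, and is presumably why the statement uses $2\delta$ rather than $\delta$.
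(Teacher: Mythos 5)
Your proposal gestures at the right tool (Lemma~\ref{lem:UDT}, via the remark that ``there are only boundedly many $\varphi$-types up to $d$-distance $2\delta$ governed by this finitely-many-parameter formula''), but then abandons that line for a half-graph extraction, and the half-graph route is not made to work. There are two concrete problems. First, the arranging step---``by always choosing $b_i$ to also lie in the basic open sets cutting out $X_{2\delta,i}$ from $X_{2\delta,0}$, that for $j>i$ the types $p_j,q_j$ agree with a common value on $b_i$''---is not coherent: the sets $X_{2\delta,i}$ live in $S_\varphi(M)$, not in $M^y$, so ``$b_i$ lying in them'' doesn't parse, and in any case the $2\delta$-CB derivative process gives you no control over the values $\varphi(p_j,b_i)$, $\varphi(q_j,b_i)$ at later stages. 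Second, you yourself flag that the $\delta$-versus-$2\delta$ accounting doesn't close; that worry is genuine, and you don't resolve it.

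The paper's proof runs differently and sidesteps both issues. From $\CB_{2\delta}(S_\varphi(M))=\infty$ one builds (as in the proof of \cite[Proposition 7.11]{BYU}) a \emph{binary tree} of nonempty closed sets, not an ordinal chain, yielding $2^\omega$-many types $\{p_\eta:\eta\in 2^\omega\}$ over a countable (separable) elementary submodel $M_0\preceq M$, pairwise at $d$-distance $>2\delta$. The crucial slack you were missing comes from Lemma~\ref{lem:stableopen}: one fixes some $\delta'<\delta$ with $\varphi$ still $\delta'$-stable, and applies Lemma~\ref{lem:UDT} to get a $\delta'$-definition $\zeta^{\delta'}_\varphi(y,\cbar_\eta)$ for each $p_\eta$, with parameters $\cbar_\eta$ ranging over the \emph{separable} space $M_0^{\bar{x}}$. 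By separability and uniform continuity of $\zeta^{\delta'}_\varphi$, among continuum-many such parameters two, $\cbar_\eta$ and $\cbar_\lambda$, give sup-norm-close definitions, within $2(\delta-\delta')$; the triangle inequality then gives $d(p_\eta,p_\lambda)\le 2\delta'+2(\delta-\delta')=2\delta$, contradicting the $>2\delta$ separation. No half-graph is ever extracted, and the numerology closes precisely because of the strict gap $\delta'<\delta$. Your instinct to use uniform approximate definability was correct; the missing ideas are (i) the descent to a separable elementary submodel, (ii) the cardinality/density pigeonhole in the parameter space rather than a half-graph in $M$, and (iii) invoking Lemma~\ref{lem:stableopen} to create the slack $\delta-\delta'>0$ that absorbs the approximation error.
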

\begin{proof}
This is more or less implicit in \cite{BYU}. But we will provide a sketch. Toward a contradiction, suppose $\CB_{2\delta}(S_\varphi(M))=\infty$. Arguing inductively as in \cite[Proposition 7.11]{BYU}, we may construct $M_0\preceq M$ of countable density character, and types $\{p_\eta:\eta\in 2^\omega\}$ in $S_\varphi(M_0)$ such that $d(p_\eta,p_\lambda)>2\delta$ for all distinct $\eta,\lambda\in 2^\omega$. By Lemma \ref{lem:stableopen}, we can fix  $\delta'<\delta$ such that $\varphi(x,y)$ is $\delta'$-stable. 
For each $\eta\in 2^\omega$,   apply Lemma \ref{lem:UDT} to find $\cbar_\eta\in M_0^{\xbar}$ such that for all $b\in M_0^y$, $\varphi(p_\eta,b)\approx_{\delta'}\zeta^{\delta'}_\varphi(b,\cbar_\eta)$.  Since $M_0^{\xbar}$ has countable density character,  there are distinct $\eta,\lambda\in 2^\omega$ such that $\|\zeta^{\delta'}_\varphi(y,\cbar_\eta)-\zeta^{\delta'}_\varphi(y,\cbar_\lambda)\|_\infty<2(\delta-\delta')$. But then $d(p_\eta,p_\lambda)<2\delta$ by the triangle inequality, which is a contradiction. 
\end{proof}

Our next main goal is a description of local Keisler measures for $\delta$-stable formulas. We will obtain a continuous analogue of the well-known result that in a classical discrete theory, if $\varphi(x,y)$ is stable then any Keisler measure on $S_\varphi(M)$ can be written as an infinite weighted sum of types (see \cite[Fact 1.1]{PiDR}).

\begin{lemma}\label{lem:sumtypes}
Let $\mu$ be a Keisler measure on $S_\varphi(M)$, and fix $\delta>0$. Then for any closed set $C\seq S_\varphi(M)$, if $\mu(C)>0$ and $\CB_\delta(C)<\infty$, then there is a closed set $C'\seq C$ such that $\diam(C')\leq\delta$ and $\mu(C')>0$.
\end{lemma}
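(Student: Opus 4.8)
The plan is to argue by transfinite induction on the ordinal $\alpha=\CB_\delta(C)$, which makes sense precisely because of the hypothesis $\CB_\delta(C)<\infty$. Throughout, write $D_0=\CBm_\delta(C)=C\cap X_{\delta,\alpha}$; by Proposition \ref{prop:BYU}$(a)$ this is a nonempty closed subset of $C$. The induction splits according to whether $\mu(D_0)>0$ or $\mu(D_0)=0$, and the first case also subsumes the base of the induction: when $\alpha=0$ we have $X_{\delta,0}=S_\varphi(M)$, so $D_0=C$ and hence $\mu(D_0)=\mu(C)>0$.

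Suppose first that $\mu(D_0)>0$. By Proposition \ref{prop:BYU}$(c)$ there are finitely many open sets $U_1,\ldots,U_n$ covering $D_0$ with $\diam(U_i\cap D_0)\leq\delta$ for each $i$. Since $D_0=\bigcup_{i=1}^n (U_i\cap D_0)$ and each $U_i\cap D_0$ is Borel, finite subadditivity yields some $i$ with $\mu(U_i\cap D_0)>0$. As $\mu$ is a regular Borel measure on the compact Hausdorff space $S_\varphi(M)$, inner regularity provides a closed set $C'\seq U_i\cap D_0$ with $\mu(C')>0$. Then $C'\seq D_0\seq C$, and $\diam(C')\leq\diam(U_i\cap D_0)\leq\delta$, so $C'$ is as desired.

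Now suppose $\mu(D_0)=0$, so that $\mu(C\setminus D_0)=\mu(C)>0$. Since $D_0$ is closed, $C\setminus D_0$ is Borel, and inner regularity again gives a closed set $D\seq C\setminus D_0$ with $\mu(D)>0$. In particular $D$ is a nonempty closed subset of $C$ disjoint from $\CBm_\delta(C)$, so Proposition \ref{prop:BYU}$(b)$ yields $\CB_\delta(D)<\CB_\delta(C)=\alpha$. Since $D$ still has positive measure and finite $\delta$-rank, the induction hypothesis applied to $D$ produces a closed set $C'\seq D\seq C$ with $\diam(C')\leq\delta$ and $\mu(C')>0$, completing the induction.

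The entire content of the argument is packaged in Proposition \ref{prop:BYU}: part $(c)$ supplies the diameter control on the perfect kernel $\CBm_\delta(C)$, while part $(b)$ ensures that removing $\CBm_\delta(C)$ strictly drops the rank. The only mild subtlety is that in the second case the closed set $D$ extracted from $C\setminus D_0$ must genuinely avoid $\CBm_\delta(C)$ in order to invoke part $(b)$ — but this is immediate from $D\seq C\setminus D_0$ — and that we are free to replace measurable sets of positive measure by closed subsets of positive measure, which is just inner regularity of $\mu$. I do not expect any serious obstacle; note in particular that no stability assumption on $\varphi$ is used here, the finiteness of $\CB_\delta(C)$ being taken as given.
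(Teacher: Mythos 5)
Your argument is correct and follows essentially the same route as the paper: split on whether $\mu(\CBm_\delta(C))$ is positive, handle the positive case via Proposition \ref{prop:BYU}$(c)$, and otherwise drop into $C\setminus\CBm_\delta(C)$ and recurse using Proposition \ref{prop:BYU}$(b)$. The one small difference is in the positive-measure case: the paper takes $C'=\overline{U_i\cap D_0}$ and invokes lower semi-continuity of the $d$-metric to conclude $\diam(C')\leq\delta$, whereas you instead pull a closed subset of $U_i\cap D_0$ out by inner regularity, which sidesteps the semi-continuity appeal entirely and is, if anything, a touch cleaner.
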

\begin{proof}
 We first prove the lemma in the special case that  $\mu(\CBm_\delta(C))>0$. So assume this is the case, and  let $D=\CBm_\delta(C)$. By Proposition \ref{prop:BYU}$(c)$, there are open sets $U_1,\ldots,U_n\seq S_\varphi(M)$ such that $D\seq\bigcup_{i=1}^n U_i$ and $\diam(U_i\cap D)\leq\delta$. Let $Y_i=U_i\cap D$, and note that each $Y_i$ is Borel. Since $\mu(D)>0$, there must be some $1\leq i\leq n$ such that $\mu(Y_i)>0$. Let $C'=\overline{Y_i}$ and note that $C'\seq D\seq C$ and $\mu(C')\geq\mu(Y_i)>0$. Finally, by lower semi-continuity of the metric, and since $\diam(Y_i)\leq\delta$, it follows that $\diam(C')\leq\delta$.

Now we prove the lemma in the general case. Let $C\seq S_\varphi(M)$ be closed with $\mu(C)>0$ and $ \CB_\delta(C)<\infty$. We proceed by induction on $\CB_\delta$-rank. If $\CB_\delta(C)=0$ then $\CBm_\delta(C)=C$, and so we can apply the special case above. So assume the result for ranks strictly less than $\CB_\delta(C)$.  

 Let $C_1=\CBm_\delta(C)$. By the special case above, we can assume $\mu(C_1)=0$. Set $X=C\backslash C_1$. Then $X$ is Borel and $\mu(X)=\mu(C)>0$. By regularity, there is a closed set $D\seq X$ such that $\mu(D)>0$. By Proposition \ref{prop:BYU}$(b)$, we have $\CB_\delta(D)<\CB_\delta(C)$. So we can apply the induction hypothesis to find a closed set $C'\seq D\seq C$ such that $\diam(C')\leq\delta$ and $\mu(C')>0$, as desired. 
\end{proof}

Given a Keisler measure $\mu$ on $S_\varphi(M)$ and a Borel set $X\seq S_\varphi(M)$, with $\mu(X)>0$, the \emph{localization of $\mu$ at $X$} is the Keisler measure $\mu_X$ on $S_\varphi(M)$ such that $\mu_X(B)=\mu(B\cap X)/\mu(X)$ for any Borel $B\seq S_\varphi(M)$. We now restate and prove Theorem \ref{prethm:sumtypes} from the introduction.

\begin{theorem}\label{thm:sumtypes}
Assume $\varphi(x,y)$ is $\delta$-stable, and let $\mu$ be a Keisler measure on $S_\varphi(M)$. Then there is a countable  collection $(C_i)_{i\in I}$ of pairwise disjoint closed sets in $S_\varphi(M)$ such that:
\begin{enumerate}[$(i)$]
\item  for all $i\in I$, $\diam(C_i)\leq 2\delta$ and $\mu(C_i)>0$, and 
\item $\mu=\sum_{i\in I}\alpha_i\mu_{C_i}$, where $\alpha_i=\mu(C_i)$.
\end{enumerate}
\end{theorem}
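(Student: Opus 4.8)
The plan is to construct the family $(C_i)_{i\in I}$ via a maximality (Zorn) argument, feeding on the two results already in hand: Proposition~\ref{prop:CBstable} gives $\CB_{2\delta}(S_\varphi(M))<\infty$, and Lemma~\ref{lem:sumtypes} (applied with the parameter $2\delta$ in place of $\delta$) says that any closed set of positive measure and finite $\CB_{2\delta}$-rank contains a closed subset of positive measure and diameter at most $2\delta$. So the ``hard'' analytic content is already isolated; what remains is organization.

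First I would let $\cF$ be the collection of all families $\cC$ of pairwise disjoint closed subsets of $S_\varphi(M)$, each of diameter at most $2\delta$ and of positive $\mu$-measure, partially ordered by inclusion. A key elementary observation is that every member of $\cF$ is automatically countable: in a probability space, for each $n\geq 1$ at most $n$ pairwise disjoint sets can have measure exceeding $1/n$, so any pairwise disjoint family of positive-measure sets is countable. Consequently the union of any chain in $\cF$ again lies in $\cF$ (pairwise disjointness of two sets in the union is witnessed inside a single member of the chain, and the remaining conditions are conditions on individual members), so Zorn's Lemma yields a maximal element $(C_i)_{i\in I}$, which is countable; set $S=\bigcup_{i\in I}C_i$, a Borel set, and $\alpha_i=\mu(C_i)$.

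Next I would show $\mu(S_\varphi(M)\setminus S)=0$. If not, then since $S_\varphi(M)\setminus S$ is Borel of positive measure, regularity of $\mu$ produces a closed set $D\seq S_\varphi(M)\setminus S$ with $\mu(D)>0$. Because the sets $X_{2\delta,\alpha}$ are decreasing and eventually empty (Proposition~\ref{prop:CBstable}), we get $\CB_{2\delta}(D)\leq\CB_{2\delta}(S_\varphi(M))<\infty$; then Lemma~\ref{lem:sumtypes} gives a closed $C'\seq D$ with $\diam(C')\leq 2\delta$ and $\mu(C')>0$. Then $(C_i)_{i\in I}\cup\{C'\}$ lies in $\cF$ and properly extends $(C_i)_{i\in I}$, contradicting maximality. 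Hence $\mu(S)=1$, so $\sum_{i\in I}\alpha_i=1$ by countable additivity. Finally, for any Borel $B\seq S_\varphi(M)$, countable additivity together with $\mu(B\setminus S)=0$ gives $\mu(B)=\sum_{i\in I}\mu(B\cap C_i)=\sum_{i\in I}\alpha_i\,\mu_{C_i}(B)$, which is precisely the identity $\mu=\sum_{i\in I}\alpha_i\mu_{C_i}$; and $\mu_{C_i}$ concentrates on $C_i$ by the definition of localization. I do not expect a serious obstacle here: the only point needing mild care is the transfer of finite $\CB_{2\delta}$-rank from the whole type space to an arbitrary closed subset $D$ (immediate from monotonicity of the $X_{2\delta,\alpha}$) and the countability of the index set $I$ (the probability-space counting argument above); everything else is routine bookkeeping with regularity and countable additivity.
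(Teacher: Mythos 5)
Your argument is correct and is essentially identical to the paper's proof: both take a maximal family of pairwise disjoint closed sets of positive measure and diameter at most $2\delta$, use regularity of $\mu$ to produce a closed set of positive measure in the complement if $\mu$ does not concentrate on the union, and then invoke Proposition~\ref{prop:CBstable} and Lemma~\ref{lem:sumtypes} to shrink that closed set and contradict maximality. Your write-up merely makes explicit a few steps the paper treats as routine (the Zorn argument, the countability of any pairwise-disjoint family of positive-measure sets, and the monotonicity of the $X_{2\delta,\alpha}$ giving finite $\CB_{2\delta}$-rank for every closed subset), so there is no material difference.
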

\begin{proof}
Let $\{C_i:i\in I\}$ be a maximal family of pairwise disjoint closed sets in $S_\varphi(M)$ with positive measure and diameter at most $2\delta$. Then $I$ must be countable by countable additivity of $\mu$. Let $B=\bigcup_{i\in I}C_i$, and note that $B$ is Borel. We claim that $\mu(B)=1$. Indeed, if not then by regularity there is a closed set $C\seq S_\varphi(M)$ such that $\mu(C)>0$ and $C\cap B=\emptyset$. By Proposition \ref{prop:CBstable}, we may apply Lemma \ref{lem:sumtypes} to $C$ to obtain a closed set $C'\seq C$ with $\mu(C')>0$ and $\diam(C')\leq 2\delta$. Then $C$ is disjoint from $C_i$ for all $i\in I$, contradicting maximality of the family.  

Now let $\alpha_i\coloneqq\mu(C_i)$. Then, for any Borel set $X\seq S_\varphi(M)$, we have
\[
\mu(X)=\mu(B\cap X)=\sum_{i\in I}\mu(C_i\cap X)=\sum_{i\in I}\alpha_i\mu_{C_i}(X).
\]
Therefore $\mu=\sum_{i\in I}\alpha_i\mu_{C_i}$, as desired. 
\end{proof}

\begin{remark}\label{rem:BYtop}
Call a subset of $S_\varphi(M)$ \emph{$\epsilon$-finite} if it can be covered by finitely many sets of diameter at most $\epsilon$. It follows from Theorem \ref{thm:sumtypes} that if $\varphi(x,y)$ is $\delta$-stable and $\mu$ is a Keisler measure on $S_\varphi(M)$, then for any $\epsilon>0$ there is some closed $2\delta$-finite set $C\seq S_\varphi(M)$ such that $\mu(C)>1-\epsilon$. This is closely related to \cite[Theorem 3.31]{BYtop} which (in our setting) says that if $\varphi(x,y)$ is stable then for any $\epsilon>0$ there is a metrically compact set $C\seq S_\varphi(M)$ such that $\mu(C)>1-\epsilon$. One can deduce this  from Theorem \ref{thm:sumtypes} by choosing a closed $\frac{1}{n}$-finite set $C_n$ such that $\mu(C_n)>1-2^{\nv n}\epsilon$, and setting $C=\bigcap_{n>0}C_n$.
\end{remark}

Before continuing with the main theme of this paper, we state some corollaries connecting Theorem \ref{thm:sumtypes} with the idea of approximating measures by types.

\begin{corollary}\label{cor:limit0}
Suppose $\varphi(x,y)$ is $\delta$-stable, and let $\mu$ be a Keisler measure on $S_\varphi(M)$. Then  there is a finitely supported Keisler measure $\mu'$ on $S_{\varphi}(M)$ such that  for each $b\in M^y$, 
\[
\int_{S_\varphi(M)}\varphi(x,b)\,d\mu\approx_{2\delta} \int_{S_\varphi(M)} \varphi(x,b)\, d\mu'.
\]
\end{corollary}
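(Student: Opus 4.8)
\emph{Proof proposal.} The plan is to deduce this from Theorem \ref{thm:sumtypes}: decompose $\mu$ into pieces of small $d$-diameter, replace each piece by a single type lying in it, and then truncate to finitely many pieces. The only real subtlety is that a direct application of Theorem \ref{thm:sumtypes} leaves no room for the truncation error, so I first use Lemma \ref{lem:stableopen} to pass to a strictly smaller parameter. Concretely: since $\varphi(x,y)$ is $\delta$-stable, Lemma \ref{lem:stableopen} gives some $\delta'<\delta$ for which $\varphi(x,y)$ is still $\delta'$-stable; set $\epsilon\coloneqq\delta-\delta'>0$. Applying Theorem \ref{thm:sumtypes} with $\delta'$ in place of $\delta$ produces a countable family $(C_i)_{i\in I}$ of pairwise disjoint closed subsets of $S_\varphi(M)$ with $\diam(C_i)\le 2\delta'$, together with $\alpha_i\coloneqq\mu(C_i)>0$ satisfying $\sum_{i\in I}\alpha_i=1$ and $\mu=\sum_{i\in I}\alpha_i\mu_{C_i}$.

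Next I would choose, for each $i\in I$, a type $p_i\in C_i$. Because $d(p_i,q)=\sup_{b\in M^y}|\varphi(p_i,b)-\varphi(q,b)|$ and $d(p_i,q)\le\diam(C_i)\le 2\delta'$ for all $q\in C_i$, and because $\mu_{C_i}$ concentrates on $C_i$, the number $\int\varphi(x,b)\,d\mu_{C_i}$ --- being an average of the values $\varphi(q,b)$, $q\in C_i$ --- lies within $2\delta'$ of $\varphi(p_i,b)$, for every $b\in M^y$. Hence the (countably supported) Keisler measure $\mu''\coloneqq\sum_{i\in I}\alpha_i\delta_{p_i}$ satisfies $\bigl|\int\varphi(x,b)\,d\mu-\int\varphi(x,b)\,d\mu''\bigr|\le\sum_{i\in I}\alpha_i\cdot 2\delta'=2\delta'$ for all $b$. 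To finish, fix a finite nonempty $I_0\seq I$ with $1-\sum_{i\in I_0}\alpha_i<\epsilon$ (possible since $\sum_{i\in I}\alpha_i=1$ and $I$ is countable), fix $i_0\in I_0$, and set
\[
\mu'\coloneqq\sum_{i\in I_0}\alpha_i\,\delta_{p_i}+\left(1-\sum_{i\in I_0}\alpha_i\right)\delta_{p_{i_0}}.
\]
This is a finitely supported Keisler measure. Since $\varphi$ is $[0,1]$-valued, $\bigl|\int\varphi(x,b)\,d\mu''-\int\varphi(x,b)\,d\mu'\bigr|$ is bounded by the redistributed mass $1-\sum_{i\in I_0}\alpha_i<\epsilon$, and therefore $\bigl|\int\varphi(x,b)\,d\mu-\int\varphi(x,b)\,d\mu'\bigr|\le 2\delta'+\epsilon<2\delta$ for every $b\in M^y$, which gives the claim.

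The measure-theoretic estimates are routine, as is the check that $\mu'$ is a Keisler measure (it is a finite convex combination of Dirac measures on a compact Hausdorff space). The main --- and essentially only --- point requiring thought is the opening move: invoking Lemma \ref{lem:stableopen} to replace $\delta$ by some $\delta'<\delta$, which is exactly what creates the slack needed to absorb the error introduced by truncating an infinite sum of types down to a finite one.
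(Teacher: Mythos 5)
Your proof is correct and takes essentially the same approach as the paper: invoke Lemma \ref{lem:stableopen} to pass to some $\delta'<\delta$, apply Theorem \ref{thm:sumtypes} at $\delta'$, pick a representative type $p_i$ in each $C_i$, and truncate to a finite index set. The only cosmetic difference is the truncation step --- you redistribute the leftover mass onto a single $\delta_{p_{i_0}}$ (costing one $\epsilon$), whereas the paper renormalizes $\alpha^{-1}\sum_{i\in I_0}\alpha_i p_i$ (costing two $\epsilon$'s); both land within $2\delta$, and yours in fact gives the slightly sharper bound $\delta+\delta'<2\delta$.
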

\begin{proof}
By Lemma \ref{lem:stableopen}, we can fix some $\delta'<\delta$ such that $\varphi(x,y)$ is $\delta'$-stable.
By Theorem \ref{thm:sumtypes}, we can write $\mu=\sum_{i\in I}\alpha_i\mu_{C_i}$ where $I$ is countable and the $C_i$'s are pairwise disjoint closed sets of diameter at most $2\delta'$. For each $i\in I$, pick some $p_i\in C_i$. So for any $q\in C_i$ and $b\in M^y$, we have $\varphi(q,b)\approx_{2\delta'} \varphi(p_i,b)$. It follows that for any $i\in I$ and any $b\in M^y$, 
\begin{equation*}
\int_{C_i}\varphi(x,b)\,d\mu_{C_i}\approx_{2\delta'} \int_{C_i}\varphi(p_i,b)\, d\mu_{C_i}=\varphi(p_i,b).\tag{$\dagger$}
\end{equation*}
Let $\epsilon=\delta-\delta'$, and choose a finite set $I_0\seq I$ such that $\alpha\coloneqq\sum_{i\in I_0}\alpha_i\geq 1-\epsilon$. Consider the finitely supported Keisler measure $\mu'=\alpha\inv\sum_{i\in I_0}\alpha_ip_i$, where $p_i$ is identified with its Dirac measure. Then by $(\dagger)$, we have that for any $b\in M^y$,
\begin{multline*}
\int_{S_\varphi(M)}\varphi(x,b)\,d\mu=\sum_{i\in I}\alpha_i\int_{C_i}\varphi(x,b)\,d\mu_{C_i}\approx_{2\delta'}\sum_{i\in I}\alpha_i\varphi(p_i,b)\\
\approx_\epsilon \sum_{i\in I_0}\alpha_i\varphi(p_i,b)=\alpha\int_{S_\varphi(M)}\varphi(x,b)\,d\mu'\approx_\epsilon \int_{S_\varphi(M)}\varphi(x,b)\,d\mu',
\end{multline*} 
as desired.
\end{proof}

When $\varphi(x,y)$ is  stable we obtain the following conclusion, which also follows from \cite[Corollary 3.32]{BYtop}.
 
\begin{corollary}\label{cor:limit2} 
Suppose $\phi(x,y)$ is stable, and let $\mu$ be a Keisler measure on $S_{\phi}(M)$. 
Then there is a sequence $(\mu_n)_{n=0}^\infty$ of  finitely supported Keisler measures on $S_\varphi(M)$ such that for all $\epsilon>0$ there is an $N$ such that for all $n\geq N$ and $b\in M^y$,  
\[
\int_{S_y(M)} \phi(x,b)\, d\mu   \approx_\epsilon \int_{S_y(M)}\phi(x,b)\, d\mu_{n} .
\] 
\end{corollary}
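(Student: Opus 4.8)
The plan is to apply Corollary \ref{cor:limit0} repeatedly with a sequence of shrinking parameters. Since $\varphi(x,y)$ is stable, it is $\delta$-stable for \emph{every} $\delta>0$; in particular, for each $n\geq 1$ it is $\frac{1}{2n}$-stable. So I would apply Corollary \ref{cor:limit0} with $\delta=\frac{1}{2n}$ to obtain, for each $n\geq 1$, a finitely supported Keisler measure $\mu_n$ on $S_\varphi(M)$ such that for every $b\in M^y$,
\[
\int_{S_\varphi(M)}\varphi(x,b)\,d\mu\approx_{1/n}\int_{S_\varphi(M)}\varphi(x,b)\,d\mu_n.
\]
To index the sequence from $0$ as in the statement, set $\mu_0=\mu_1$.

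Now, given $\epsilon>0$, choose $N\geq 1$ with $\frac1N\leq\epsilon$. For all $n\geq N$ and all $b\in M^y$ we have $\int\varphi(x,b)\,d\mu\approx_{1/n}\int\varphi(x,b)\,d\mu_n$ with $\frac1n\leq\frac1N\leq\epsilon$, hence $\int\varphi(x,b)\,d\mu\approx_\epsilon\int\varphi(x,b)\,d\mu_n$, which is exactly the desired conclusion.

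There is essentially no obstacle here: all of the content has already been extracted in Corollary \ref{cor:limit0} (and ultimately Theorem \ref{thm:sumtypes}), and the present statement is just the observation that full stability lets the error parameter $2\delta$ in Corollary \ref{cor:limit0} be driven to $0$. The one point worth making explicit is that the approximation supplied by Corollary \ref{cor:limit0} is uniform over $b\in M^y$, which is precisely what permits a single threshold $N$ to work simultaneously for all parameters $b$.
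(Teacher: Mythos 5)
Your proof is correct, and it is precisely the argument the paper has in mind (the paper omits the proof of Corollary \ref{cor:limit2}, treating it as an immediate consequence of Corollary \ref{cor:limit0} once full stability lets $\delta\to 0$). The key point you highlight — that the approximation in Corollary \ref{cor:limit0} is uniform in $b\in M^y$ — is indeed what makes the statement go through.
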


We point out that both of the previous corollaries can be rephrased as statements about  linear functionals.
 Let us also note that the conclusion of Corollary \ref{cor:limit2} is a continuous generalization of what is known in the classical first order case for NIP formulas, and thus we expect that it  extends to the setting where the continuous formula $\phi(x,y)$ is NIP.
But in any case the approximation by finitely supported measures in  Corollary \ref{cor:limit2}  is much weaker than the statement of Theorem \ref{thm:sumtypes}.  In the classical  first order case, this is analogous to  property $(b)$ below of a Keisler $\phi$-measure $\mu$ over $M$ being substantially stronger than property $(a)$ below.
\begin{enumerate}[$(a)$]
\item For each $\epsilon > 0$, there are $p_{1},\ldots,p_{k}\in S_{\phi}(M)$ such that for any $b\in M$, $\mu(\phi(x,b))$ is within $\epsilon$ of the average value of $\phi(x,b)$ at the $p_{i}$'s.
\item For each $\epsilon> 0$, there are $p_{1},\ldots,p_{k}\in S_{\phi}(M)$ such that $\mu(\{p_{1},\ldots,p_{k}\}) \geq 1-\epsilon$.
\end{enumerate}
In the discrete setting, $\varphi(x,y)$ is stable if and only if property $(b)$ holds for all Keisler $\varphi$-measures $\mu$; and $\varphi(x,y)$ is NIP if and only if property $(a)$ holds for all $\mu$ and all $\varphi$-generated formulas $\theta(x,\ybar)$. 

\section{Model-theoretic structure of stable formulas}\label{sec:upstairs}

The goal of this section is to prove Theorem \ref{prethm:main-upstairs}, which provides a structure theorem for stable continuous formulas  in terms of ``homogeneous pairs". We first define our notion of homogeneity precisely, and in a general setting. 

Let $V$ and $W$ be nonempty sets, and fix a function $f\colon V\times W\to [0,1]$. Let $\cA$ and $\cB$ be Boolean algebras of subsets of $V$ and $W$, respectively. Assume that $\cA$ contains any set of the form $\{x\in V:f(x,b)\in D\}$ where $b\in W$ is fixed and $D\seq [0,1]$ is a closed interval. Also assume the analogous condition for $\cB$. Let $\mu$ and $\nu$ be finitely additive probability measures on $\cA$ and $\cB$, respectively.

\begin{definition}\label{def:hom}
Fix real numbers $\delta,\gamma,\epsilon>0$. 
Given $V_*\in\cA$ and $W_*\in\cB$, we say that the pair $(V_*,W_*)$ is \textbf{$(\delta;\gamma,\epsilon)$-homogeneous for $\mu$ and $\nu$} if there are $V'\seq V_*$ and $W'\seq W_*$ satisfying the following properties:
\begin{enumerate}[$(i)$]
\item $V'\in\cA$ and $\mu(V')\geq (1-\gamma)\mu(V_*)$.
\item $W'\in\cB$ and $\nu(W')\geq (1-\gamma)\nu(W_*)$.
\item There is some $r\in [0,1]$ such that for all $b\in W'$,
\[
\mu(\{a\in V_*:f(a,b)\approx_\delta r\})\geq (1-\epsilon)\mu(V_*).
\]
\item There is some $s\in [0,1]$ such that for all $a\in V'$,
\[
\nu(\{b\in W_*:f(a,b)\approx_\delta s\})\geq (1-\epsilon)\nu(W_*).
\]
\end{enumerate}
If $\gamma=\epsilon$, then we say $(V_*,W_*)$ is \textbf{$(\delta;\gamma)$-homogeneous for $\mu$ and $\nu$}. 
\end{definition}

Note that the fixed function $f$ has been suppressed from the terminology. 
When the ambient measures are fixed, and there is no possibility of confusion, we will also often omit ``for $\mu$ and $\nu$". For example, if $V$ and $W$ are finite, then we will generally be in the situation where  $\mu$ and $\nu$ are the normalized counting measures on $\cP(V)$ and $\cP(W)$. Let us now comment more specifically on the finite case. Given a (nonempty) finite set $X$ and a function $f\colon X\to \R$, we define the \emph{normalized} $\ell^1$-norm $\|f\|_1=\frac{1}{|X|}\sum_{x\in X}|f(x)|$.

\begin{remark}\label{rem:hompairs}
Assume $V$ and $W$ are finite, and suppose $(V_*,W_*)$ is $(\delta;\gamma,\epsilon)$-homogeneous, witnessed by $r,s\in [0,1]$. Then: 
\begin{enumerate}[\hspace{5pt}$\ast$]
\item for all but at most $\gamma|W_*|$-many $b\in W_*$, for all but at most $\epsilon|V_*|$-many $a\in V_*$, we have $f(a,b)\approx_\delta r$, \emph{and dually},
\item  for all but at most $\gamma|V_*|$-many $a\in V_*$, for all but at most $\epsilon|W_*|$-many $b\in W_*$, we have $f(a,b)\approx_\delta s$.
\end{enumerate}
In this case, it should  intuitively follow that $r$ and $s$ are not much different. Indeed, if $f'=f|_{V_*\times W_*}$ then by a direct computation, $\|f'(x,y)-r\|_1$ and $\|f'(x,y)-s\|_1$ are both bounded by $\delta+\gamma+\epsilon$. So $|r-s|\leq 2(\delta+\gamma+\epsilon)$ by the triangle inequality.
\end{remark}

We now return to the previous setting  where $T$ is a complete continuous $\cL$-theory, with $\cL$ countable. Fix $M\models T$ and a $[0,1]$-valued $\cL$-formula $\varphi(x,y)$. The first step toward our main result is the following corollary of Theorem \ref{thm:sumtypes}. 

\begin{corollary}\label{cor:sumtypes}
Assume $\varphi(x,y)$ is $\delta$-stable, and let $\mu$ be a Keisler measure on $S_\varphi(M)$. Then for any $\epsilon>0$, there are pairwise disjoint closed sets $C_1,\ldots,C_n\seq S_\varphi(M)$ satisfying the following properties.
\begin{enumerate}[$(i)$]
\item $\diam(C_i)\leq 2\delta$ for all $1\leq i\leq n$.
\item $\mu(\bigcup_{i=1}^n C_i)>1-\epsilon\mu(C_1)$.
\item For any $\gamma\in (0,1)$, there are pairwise disjoint explicitly open sets $V_1,\ldots,V_n\seq S_\varphi(M)$ such that, for all $1\leq i\leq n$, $C_i\seq V_i$ and $\mu(V_i\backslash C_i)<\gamma\mu(V_i)$. 
\end{enumerate}
\end{corollary}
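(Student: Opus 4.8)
The plan is to apply Theorem \ref{thm:sumtypes} to obtain a countable decomposition $\mu = \sum_{i\in I}\alpha_i\mu_{C_i}$ into pairwise disjoint closed sets $C_i$ of diameter at most $2\delta'$ for some $\delta' < \delta$ (using Lemma \ref{lem:stableopen} to secure a slightly smaller parameter, which we will need a bit of room for in part $(iii)$, although actually $2\delta$ already suffices for $(i)$). Order the index set so that $\alpha_1\geq \alpha_2\geq\cdots$; then $\alpha_1 > 0$. Given $\epsilon > 0$, choose a finite initial segment $\{1,\ldots,n\}\subseteq I$ large enough that $\mu\big(\bigcup_{i=1}^n C_i\big) = \sum_{i=1}^n\alpha_i > 1 - \epsilon\alpha_1 = 1 - \epsilon\mu(C_1)$; this is possible because the tail $\sum_{i>n}\alpha_i\to 0$. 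This gives properties $(i)$ and $(ii)$ immediately.

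For property $(iii)$, fix $\gamma\in(0,1)$. The goal is to fatten each $C_i$ to an explicitly open set $V_i$ that barely gains any measure, while keeping the $V_i$ pairwise disjoint. The key tool is outer regularity of the Borel measure $\mu$ together with the fact that explicitly open sets form a basis: since $S_\varphi(M)$ is compact Hausdorff, any open neighborhood of the compact set $C_i$ contains a finite union of basic open sets still containing $C_i$, i.e., an explicitly open set. So first I would use regularity to pick, for each $i\leq n$, an open set $O_i\supseteq C_i$ with $\mu(O_i\setminus C_i)$ very small — small enough that after all subsequent shrinking we land below $\gamma\mu(V_i)$; note $\mu(V_i)\geq\mu(C_i) = \alpha_i > 0$, so it suffices to make $\mu(O_i\setminus C_i) < \gamma\alpha_i$, or a slightly smaller quantity to absorb the disjointification losses. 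Since the $C_i$ are pairwise disjoint \emph{closed} sets in a compact Hausdorff (hence normal) space, they can be separated by disjoint open sets; intersecting each $O_i$ with the corresponding separating open set, we may assume the $O_i$ are themselves pairwise disjoint without increasing any $\mu(O_i\setminus C_i)$. Finally, replace each $O_i$ by an explicitly open $V_i$ with $C_i\subseteq V_i\subseteq O_i$ (possible since explicitly open sets are a basis and $C_i$ is compact): the $V_i$ remain pairwise disjoint and $\mu(V_i\setminus C_i)\leq\mu(O_i\setminus C_i) < \gamma\mu(V_i)$.

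The main obstacle — really the only nontrivial point — is making sure the three requirements (small measure gain, pairwise disjointness, explicit openness) can be arranged \emph{simultaneously}, since each adjustment could in principle interfere with the others. The resolution is that all three operations are monotone shrinkings of the $O_i$: separating into disjoint opens only removes points, and passing from $O_i$ to an explicitly open inner approximation of the compact set $C_i$ only removes points, so none of them can enlarge $\mu(V_i\setminus C_i)$ beyond the bound secured at the outset by regularity. One should be a little careful to choose the initial regularity bound for $\mu(O_i\setminus C_i)$ strictly smaller than $\gamma\alpha_i$ so that the final strict inequality $\mu(V_i\setminus C_i) < \gamma\mu(V_i)$ holds (using $\mu(V_i)\geq\alpha_i$); this is a routine matter of bookkeeping. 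All of these are standard facts from point-set topology and measure theory, so the corollary follows quickly once Theorem \ref{thm:sumtypes} is in hand.
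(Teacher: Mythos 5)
Your proposal is correct and follows essentially the same route as the paper: truncate the countable decomposition from Theorem \ref{thm:sumtypes}, then for $(iii)$ use regularity to get open neighborhoods of small excess measure, normality to disjointify them, and compactness of $C_i$ with the basis of basic open sets to pass to explicitly open inner approximations, observing that all three steps only shrink the opens. The only cosmetic difference is the order in which you apply the regularity shrinking and the normality separation, and your bound $\mu(O_i\setminus C_i)<\gamma\alpha_i$ (combined with $\mu(V_i)\geq\alpha_i$) is a slightly tighter but equally valid arrangement of the bookkeeping than the paper's $\mu(U_i)<\alpha_i/(1-\gamma)$.
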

\begin{proof}
Let $\{C_i:i\in I\}$ be as in Theorem \ref{thm:sumtypes}, and set $\alpha_i\coloneqq\mu(C_i)$. Identify $I$ with an initial segment of $\Z^+$,  and choose $n\geq 1$ large enough so that $\sum_{i=1}^n\alpha_i>1-\epsilon\alpha_1$. This yields $C_1,\ldots,C_n$ satisfying  $(i)$ and $(ii)$. Toward  $(iii)$, fix $\gamma\in (0,1)$.

 For all distinct $i,j\leq n$, we can choose disjoint open sets $V_{i,j}$ and $W_{i,j}$ such that $C_i\seq V_{i,j}$ and $C_j\seq W_{i,j}$. Set $V'_i=\bigcap_{j\neq i}(V_{i,j}\cap W_{j,i})$.  Then $V'_i$ is open, $C_i\seq V'_i$, and if $i,j\leq n$ are distinct then $V'_i\cap V'_j\seq V_{i,j}\cap W_{i,j}=\emptyset$. 

Now, by regularity of $\mu$, there is an open set $U_i\supseteq C_i$ such that $\mu(U_i)<\alpha_i/(1-\gamma)$. Let $V''_i=V'_i\cap U_i$. Then $V''_i$ is open, $C_i\seq V''_i$, $\mu(V''_i)<\alpha_i/(1-\gamma)$, and $V''_1,\ldots,V''_n$ are pairwise disjoint.  Since $C_i$ is compact, there is an explicitly open set $V_i$ such that $C_i\seq V_i\seq V''_i$. Note that $V_1,\ldots,V_n$ are pairwise disjoint and, for all $1\leq i\leq n$, we have
\[
\mu(V_i\backslash C_i)=\mu(V_i)-\alpha_i<\gamma\mu(V_i),
\]
where the final inequality follows from $\mu(V_i)\leq\mu(V''_i)\leq\mu(U_i)<\alpha_i/(1-\gamma)$.
\end{proof}

We now prove a rather technical lemma that will be used to obtain  both Theorems \ref{prethm:main-finite-thm} and \ref{prethm:main-upstairs} from the introduction.

\begin{lemma}\label{lem:upstairs-hom}
Assume $\varphi(x,y)$ is $\delta$-stable, and let $\mu$ and $\nu$ be Keisler measures on $S_\varphi(M)$ and $S_{\varphi^*}(M)$, respectively. Fix some $\epsilon>0$. Then there are $m,n\geq 1$ such that for any $\gamma\in (0,1)$, there are:
\begin{enumerate}[\hspace{5pt}$\ast$]
\item explicit $\varphi$-formulas $\psi_1(x),\ldots,\psi_m(x)$ over $M$,
\item explicit $\varphi^*$-formulas $\theta_1(y),\ldots,\theta_n(y)$ over $M$,
\item finite tuples $\cbar_1,\ldots,\cbar_m,\dbar_1,\ldots,\dbar_n$ from $M$,  and
\item some $\eta>0$,
\end{enumerate}
satisfying the following properties.
\begin{enumerate}[$(1)$]
\item $[\psi_1(x)>0],\ldots,[\psi_m(x)>0]$ are pairwise disjoint.
\item $[\theta_1(y)>0],\ldots,[\theta_n(y)>0]$ are pairwise disjoint.
\item $\mu(\bigvee_{i=1}^m\psi_i(x)\geq\eta)~>~1-\epsilon\mu(\psi_1(x)\geq\eta)$.
\item $\nu(\bigvee_{j=1}^n \theta_j(y)\geq\eta)~>~1-\epsilon\nu(\theta_1(y)\geq\eta)$.
\item If $1\leq i\leq m$ then, for all $b\in M^y$,
\[
\mu\!\left(\psi_i(x)\geq\eta\wedge\varphi(x,b)\approx_{3\delta}\zeta^\delta_\varphi(b,\cbar_i)\right)~>~(1-\gamma)\mu(\psi_i(x)>0).
\]
\item If $1\leq j\leq n$ then, for all $a\in M^x$, 
\[
\nu\!\left(\theta_j(y)\geq\eta\wedge\varphi(a,y)\approx_{3\delta}\zeta^\delta_{\varphi^*}(a,\dbar_j)\right)~>~(1-\gamma)\nu(\theta_j(y)>0).
\]
\item For all $(i,j)\in [m]\times [n]$, there is some $r\in [0,1]$ such that 
\[
\nu\!\left(\theta_j(y)\geq\eta\wedge\zeta^\delta_\varphi(y,\cbar_i)\approx_{2\delta} r\right)~>~(1-\gamma)\nu(\theta_j(y)>0).
\]
\item For all $(i,j)\in [m]\times [n]$, there is some $s\in [0,1]$ such that 
\[
\mu\!\left(\psi_i(x)\geq\eta\wedge\zeta^\delta_{\varphi^*}(x,\dbar_j)\approx_{2\delta} s\right)~>~(1-\gamma)\mu(\psi_i(x)>0).
\]
\end{enumerate}
\end{lemma}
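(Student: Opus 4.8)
The plan is to assemble the statement directly from three facts already established: the decomposition of a Keisler measure on a $\delta$-stable formula into finitely many pieces of small diameter (Corollary \ref{cor:sumtypes}), the uniform approximate $\varphi^*$-definability of $\varphi$-types (Lemma \ref{lem:UDT} and its dual), and the $1$-Lipschitz estimate for min-max generated formulas with respect to the $d$-metric (Proposition \ref{prop:lip}). The crucial structural feature of the statement is that $m$ and $n$ must be produced \emph{before} $\gamma$, and Corollary \ref{cor:sumtypes} has been formulated precisely so that this order of quantification is available: it first delivers the closed sets $C_i$ (hence the count $n$ there) and only afterwards, for each $\gamma$, the explicitly open thickenings.

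First I would apply Corollary \ref{cor:sumtypes} to $\mu$ over $S_\varphi(M)$ with parameter $\epsilon$, obtaining $m\geq 1$ and pairwise disjoint closed $C_1,\dots,C_m\seq S_\varphi(M)$ with $\diam(C_i)\leq 2\delta$, $\mu(C_i)>0$, and $\mu(\bigcup_i C_i)>1-\epsilon\mu(C_1)$; symmetrically, applying it to $\nu$ over $S_{\varphi^*}(M)$ yields $n\geq 1$ and $D_1,\dots,D_n\seq S_{\varphi^*}(M)$ with the analogous properties. This fixes $m$ and $n$. Next, for each $i$ choose a type $p_i\in C_i$ and apply Lemma \ref{lem:UDT} to obtain $\cbar_i$ with $\varphi(p_i,b)\approx_\delta\zeta^\delta_\varphi(b,\cbar_i)$ for all $b\in M^y$; dually, for each $j$ choose $q_j\in D_j$ and use the dual of Lemma \ref{lem:UDT} to obtain $\dbar_j$ with $\varphi(a,q_j)\approx_\delta\zeta^\delta_{\varphi^*}(a,\dbar_j)$ for all $a\in M^x$. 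Since $\diam(C_i)\leq 2\delta$ and $\diam(D_j)\leq 2\delta$, the definition of the $d$-metric together with the triangle inequality upgrade these to $\varphi(q,b)\approx_{3\delta}\zeta^\delta_\varphi(b,\cbar_i)$ for all $q\in C_i$, $b\in M^y$, and dually $\varphi(a,q)\approx_{3\delta}\zeta^\delta_{\varphi^*}(a,\dbar_j)$ for all $q\in D_j$, $a\in M^x$. Note that $m,n,\cbar_i,\dbar_j$ depend only on $\epsilon$.

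Now fix $\gamma\in(0,1)$ and apply part $(iii)$ of Corollary \ref{cor:sumtypes} (to $\mu$ and to $\nu$, with parameter $\gamma$) to get pairwise disjoint explicitly open $V_i\supseteq C_i$ with $\mu(V_i\setminus C_i)<\gamma\mu(V_i)$, and pairwise disjoint explicitly open $W_j\supseteq D_j$ with $\nu(W_j\setminus D_j)<\gamma\nu(W_j)$. By the description of explicitly open sets in Section \ref{sec:measures}'s preliminaries, write $V_i=[\psi_i(x)>0]$ and $W_j=[\theta_j(y)>0]$ for explicit $\varphi$-formulas $\psi_i$ and explicit $\varphi^*$-formulas $\theta_j$; this gives $(1)$ and $(2)$. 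Each $\psi_i$ is continuous and strictly positive on the compact set $C_i$ (and likewise each $\theta_j$ on $D_j$), so letting $\eta$ be the minimum over all $i,j$ of $\min_{C_i}\psi_i$ and $\min_{D_j}\theta_j$ gives $\eta>0$ with $C_i\seq[\psi_i(x)\geq\eta]\seq V_i$ and $D_j\seq[\theta_j(y)\geq\eta]\seq W_j$.

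It then remains to verify the eight conditions, which is bookkeeping with finite sums and the triangle inequality. Conditions $(3)$ and $(4)$ follow since $\bigcup_i[\psi_i\geq\eta]\supseteq\bigcup_i C_i$ and $\mu([\psi_1\geq\eta])\geq\mu(C_1)$, and dually. For $(5)$ and $(6)$: $C_i$ lies inside both $[\psi_i\geq\eta]$ and $[\varphi(x,b)\approx_{3\delta}\zeta^\delta_\varphi(b,\cbar_i)]$, so the relevant intersection has measure at least $\mu(C_i)=\mu(V_i)-\mu(V_i\setminus C_i)>(1-\gamma)\mu(V_i)=(1-\gamma)\mu(\psi_i(x)>0)$, and $(6)$ is dual. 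For the cross-conditions $(7)$ and $(8)$, note that $\zeta^\delta_\varphi(y,\cbar_i)$ is a $\varphi^*$-formula and $\zeta^\delta_{\varphi^*}(x,\dbar_j)$ a $\varphi$-formula, so by Proposition \ref{prop:lip} (for the latter, with the roles of $\varphi$ and $\varphi^*$ exchanged) each is $1$-Lipschitz for the $d$-metric; taking $r=\zeta^\delta_\varphi(q_j,\cbar_i)$ and $s=\zeta^\delta_{\varphi^*}(p_i,\dbar_j)$, the diameter bounds give $D_j\seq[\zeta^\delta_\varphi(y,\cbar_i)\approx_{2\delta}r]$ and $C_i\seq[\zeta^\delta_{\varphi^*}(x,\dbar_j)\approx_{2\delta}s]$, and the same measure estimate closes the argument. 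The only point that genuinely requires care is keeping the dependencies straight and observing that the error parameter used in the thickening step can be taken equal to the $\gamma$ appearing in $(5)$–$(8)$, so no rescaling is needed; everything else is routine.
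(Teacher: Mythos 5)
Your proof is correct and follows essentially the same route as the paper's: Corollary \ref{cor:sumtypes} supplies the closed sets $C_i,D_j$ and (for each $\gamma$) their pairwise disjoint explicitly open thickenings, Lemma \ref{lem:UDT} supplies $\cbar_i,\dbar_j$ via chosen types $p_i\in C_i$, $q_j\in D_j$, and Proposition \ref{prop:lip} with the diameter bounds gives conditions $(7)$ and $(8)$ exactly as in the paper. The only (harmless) deviation is your choice of $\eta$ by compactness of the $C_i$'s and $D_j$'s, which forces $C_i\seq[\psi_i(x)\geq\eta]$ and $D_j\seq[\theta_j(y)\geq\eta]$ and thereby slightly simplifies the measure estimates, whereas the paper selects $\eta$ by countable additivity with an auxiliary margin $\tau$; both yield the stated inequalities.
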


\begin{remark}\label{rem:upstairs-hom}
Before starting the proof of Lemma \ref{lem:upstairs-hom}, we take a moment to analyze properties $(5)$-$(8)$, and explain the connection to homogeneous pairs. 

Fix $(i,j)\in [m]\times[n]$, and consider the sets $V_i\coloneqq [\psi_i(x)\geq\eta]\seq S_\varphi(M)$ and $W_j\coloneqq [\theta_j(y)\geq\eta]\seq S_{\varphi^*}(M)$. Then $(5)$ says that for any $b\in M^y$, the function $\varphi(x,b)$ is within $3\delta$ of the \emph{fixed value} $\zeta^\delta_\varphi(b,\cbar_i)$ on almost all of $V_i$. For readers familiar with \cite{MaSh},  this is a functional analogue of the notion of a ``good set" in stable graph regularity. By itself, this  does not give homogeneity, since the value $\zeta^\delta_\varphi(b,\cbar_i)$ depends on the choice of $b$. However, property $(7)$ says that there is some single $r\in [0,1]$ (depending only on $i$ and $j$) such that the function $\zeta^\delta_\varphi(y,\cbar_i)$ is within $2\delta$ of $r$ on almost all of $W_j$. Applying the triangle inequality, we conclude that for almost all $b\in W_j(M)$, $\varphi(x,b)$ is within $5\delta$ of $r$ on almost all of $V_i$. By symmetric arguments with properties $(6)$ and $(8)$, we also have some $s\in [0,1]$ such that for almost all $a\in V_i(M)$, $\varphi(a,y)$ is within $5\delta$ of $s$ on almost all of $W_j$. Altogether, $(V_i,W_j)$  resembles a homogeneous pair. 
\end{remark}

\begin{proof}[\textnormal{\textbf{Proof of Lemma \ref{lem:upstairs-hom}}}]
Apply Corollary \ref{cor:sumtypes} to both $\mu$ and $\nu$ to obtain  pairwise disjoint closed sets $C_1,\ldots,C_m\seq S_\varphi(M)$ and pairwise disjoint closed sets $D_1,\ldots, D_n\seq S_{\varphi^*}(M)$ satisfying the following properties.
\begin{enumerate}[$(i)$]
\item For all $(i,j)\in [m]\times [n]$, $\diam(C_i)\leq 2\delta$ and $\diam(D_j)\leq 2\delta$.
\item $\mu(\bigcup_{i=1}^m C_i)>1-\epsilon\mu(C_1)$ and $\nu(\bigcup_{j=1}^n D_j)>1-\epsilon\nu(D_1)$. 
\item Corollary \ref{cor:sumtypes}$(iii)$ holds for for both $C_1,\ldots,C_m$ and $D_1,\ldots,D_n$. 
\end{enumerate}

Now fix $\gamma\in (0,1)$. By $(iii)$,  there are pairwise disjoint explicitly open sets $V_1,\ldots,V_m\seq S_\varphi(M)$, and pairwise disjoint explicitly opens sets $W_1,\ldots,W_n\seq S_{\varphi^*}(M)$, such that if $(i,j)\in [m]\times [n]$ then $C_i\seq V_i$, $D_j\seq W_j$, $\mu(V_i\backslash C_i)<\gamma\mu(V_i)$, and $\nu(W_j\backslash D_j)<\gamma\nu(D_j)$.

Let $\psi_i(x)$ be an explicitly open $\varphi$-formula over $M$ such that $V_i=[\psi_i(x)>0]$. Let $\theta_j(y)$ be an explicitly open $\varphi^*$-formula over $M$ such that $W_j=[\theta_j(y)>0]$. 
Note that properties $(1)$ and $(2)$ of the lemma hold. For the rest of the properties, we need to define the appropriate $\cbar_i$, $\dbar_j$, and $\eta>0$.

For each $1\leq i\leq m$, let $p_i$ be a fixed type in $C_i$ and, using Lemma \ref{lem:UDT}, choose a finite tuple $\cbar_i$ from $M$ such that for all $b\in M^y$, $\zeta^\delta_\varphi(b,\cbar_i)\approx_\delta \varphi(p_i,b)$. For each $1\leq j\leq n$, let $q_j$ be a fixed type in $D_j$, and let $\dbar_j$ be chosen similarly (using Lemma \ref{lem:UDT} applied to $\varphi^*(x,y)$ and   $\zeta^\delta_{\varphi^*}(x,\ybar))$.

Next, let $\tau>0$ be small enough so that $\mu(V_i\backslash C_i)<(\gamma-\tau)\mu(V_i)$ for all $1\leq i\leq m$, and $\nu(W_j\backslash D_j)<(\gamma-\tau)\nu(W_j)$ for all $1\leq j\leq n$. Note also that, by $(ii)$ above,
\[
\textstyle \mu(\bigcup_{i=1}^m V_i)>1-\epsilon\mu(V_1)\mand \nu(\bigcup_{j=1}^n W_j)>1-\epsilon\nu(W_1).
\]
Altogether, by countable additivity, we may choose some $\eta>0$ satisfying the following conditions:
\begin{enumerate}[\hspace{5pt}$\ast$]
\item $\mu(\psi_i(x)\geq\eta)\geq (1-\tau)\mu(V_i)$ for all $1\leq i\leq m$, 
\item $\nu(\theta_j(y)\geq\eta)\geq(1-\tau)\nu(W_j)$ for all $1\leq j\leq n$,
\item $\mu(\bigvee_{i=1}^m\psi_i(x)\geq\eta)>1-\epsilon\mu(\psi_1(x)\geq\eta)$, and 
\item $\nu(\bigvee_{j=1}^n \theta_j(y)\geq\eta)>1-\epsilon\nu(\theta_1(y)\geq\eta)$.
\end{enumerate}
In particular, we have properties $(3)$ and $(4)$ of the lemma.  As for the remaining properties, we  show $(5)$ and $(7)$. The arguments for $(6)$ and $(8)$ are nearly identical.

For $(5)$, fix $1\leq i\leq m$ and $b\in M^y$, and define 
\[
V'=[\psi_i(x)\geq\eta\wedge\varphi(x,b)\approx_{3\delta}\zeta^\delta_\varphi(b,\cbar_i)].
\]
We want to show $\mu(V')>(1-\gamma)\mu(V_i)$. Note that if $p\in C_i$ then $d(p,p_i)\leq 2\delta$ by $(i)$, and so 
\[
\varphi(p,b)\approx_{2\delta}\varphi(p_i,b)\approx_\delta\zeta^\delta_\varphi(b,\cbar_i).
\]
Thus we have $[\psi_i(x)\geq\eta]\backslash V'\seq V_i\backslash C_i$. Therefore 
\begin{multline*}
\mu([\psi_i(x)\geq\eta]\backslash V')\leq \mu(V_i\backslash C_i)<(\gamma-\tau)\mu(V_i)\leq \\
\frac{\gamma-\tau}{1-\tau}\mu(\psi_i(x)\geq \eta)=\left(1-\frac{1-\gamma}{1-\tau}\right)\mu(\psi_i(x)\geq\eta).
\end{multline*}
So 
\[
\mu(V')>\frac{1-\gamma}{1-\tau}\mu(\psi_i(x)\geq\eta)\geq (1-\gamma)\mu(V_i),
\]
as desired. 

Finally, for $(7)$, fix $(i,j)\in [m]\times [n]$ and set $r=\zeta^\delta_\varphi(q_j,\cbar_i)$. Define 
\[
W'=[\theta_j(y)\geq\eta\wedge\zeta^\delta_\varphi(y,\cbar_i)\approx_{2\delta} r].
\]
We want to  show  $\nu(W')> (1-\gamma)\nu(W_j)$. If $q\in D_j$ then $d(q,q_j)\leq 2\delta$ by $(i)$, and so $\zeta^\delta_\varphi(q,\cbar_i)\approx_{2\delta} r$ by Proposition \ref{prop:lip}. Thus we have $[\theta_j(y)\geq\eta]\backslash W'\seq W_j\backslash D_j$. By choice of $\eta$ and similar steps as above, it follows that $\nu(W')>(1-\gamma)\nu(W_j)$. 
\end{proof}

In order to derive a statement about subsets of $M$ (rather than $S_\varphi(M)$) from the previous lemma, we now assume $M$ is $\omega$-saturated. In this case, any Keisler measure $\mu$ on $S_\varphi(M)$ induces a well-defined finitely  additive probability measure on  the Boolean algebra generated by explicit $\varphi$-zerosets in $M^x$ (see Definition \ref{def:zeroset}). We now restate and prove Theorem \ref{prethm:main-upstairs} from the introduction.

\begin{theorem}\label{thm:main-upstairs}
Suppose $M$ is $\omega$-saturated. Assume $\varphi(x,y)$ is $\delta$-stable, and let $\mu$ and $\nu$ be Keisler measures on $S_\varphi(M)$ and $S_{\varphi^*}(M)$, respectively. Fix some $\epsilon>0$. Then there are $m,n\geq 1$ such that for any $\gamma\in (0,1)$, there are partitions $M^x=A_0\cup A_1\cup\ldots\cup A_m$ and $M^y=B_0\cup B_1\cup\ldots\cup B_n$ satisfying the following properties.
\begin{enumerate}[\hspace{5pt}$\ast$]
\item $A_1,\ldots,A_m$ are explicit $\varphi$-zerosets, and $B_1,\ldots,B_n$ are explicit $\varphi^*$-zerosets. 
\item If $(i,j)\in [m]\times [n]$ then $(A_i,B_j)$ is $(5\delta;\gamma)$-homogeneous for $\mu$ and $\nu$.
\item $\mu(A_0)\leq \epsilon \mu(A_1)$ and $\nu(B_0)\leq\epsilon\nu(B_1)$.
\end{enumerate}
\end{theorem}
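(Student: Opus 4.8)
The plan is to feed Lemma \ref{lem:upstairs-hom} into the $\omega$-saturation of $M$ so that explicit $\varphi$-formulas on the type space descend to explicit $\varphi$-zerosets in $M^x$, and so that the measures $\mu,\nu$ on the type spaces act on these zerosets. First I would apply Lemma \ref{lem:upstairs-hom} with the given $\delta$, $\mu$, $\nu$, and $\epsilon$ to obtain $m,n\geq 1$; then, for the given $\gamma\in(0,1)$, run the ``$\gamma$'' part of the lemma (I would actually feed in some $\gamma'<\gamma$ to leave room, though the $<$ in all the inequalities of the lemma may already suffice) to get explicit $\varphi$-formulas $\psi_1,\dots,\psi_m$, explicit $\varphi^*$-formulas $\theta_1,\dots,\theta_n$, tuples $\cbar_i,\dbar_j$, and $\eta>0$ with properties $(1)$--$(8)$. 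Now set $A_i=[\psi_i(x)\geq\eta]$ as a subset of $M^x$ for $1\leq i\leq m$ and $B_j=[\theta_j(y)\geq\eta]$ as a subset of $M^y$ for $1\leq j\leq n$; by the discussion after Definition \ref{def:zeroset} these are explicit $\varphi$-zerosets and explicit $\varphi^*$-zerosets respectively. By property $(1)$ the sets $[\psi_i(x)>0]$ are pairwise disjoint on the type space, and since $[\psi_i(x)\geq\eta]\seq[\psi_i(x)>0]$, the $A_i$ are pairwise disjoint in $M^x$ (by $\omega$-saturation, the zeroset $[\psi_i(x)\geq\eta]$ in $M^x$ is nonempty exactly when the corresponding closed subset of $S_\varphi(M)$ is, and disjointness transfers). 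Put $A_0=M^x\setminus(A_1\cup\dots\cup A_m)$ and $B_0$ dually; this gives the two partitions.

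\textbf{Matching the measure conditions.} The subtle point is that $\mu$ lives on $S_\varphi(M)$ while the $A_i$ live in $M^x$. By $\omega$-saturation, each explicit $\varphi$-zeroset $Z\seq M^x$ defined by a Boolean combination of conditions $\varphi(x,b)\in D$ corresponds to a closed subset $\hat{Z}\seq S_\varphi(M)$ (the set of $\varphi$-types containing the relevant condition), and we define the induced finitely additive measure by $\mu(Z):=\mu(\hat Z)$ — this is well-defined precisely because saturation makes ``$Z$ is nonempty'' equivalent to ``$\hat Z$ is nonempty'', hence makes the correspondence injective on the Boolean algebra and additive. Under this identification, $\mu(A_i)=\mu([\psi_i(x)\geq\eta])$ in the sense of the measure on the type space, so property $(3)$ of the lemma gives $\mu(\bigcup_{i=1}^m A_i)>1-\epsilon\mu(A_1)$, i.e.\ $\mu(A_0)\leq\epsilon\mu(A_1)$; dually $\nu(B_0)\leq\epsilon\nu(B_1)$ from $(4)$. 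This yields the third bullet.

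\textbf{Homogeneity of the pairs.} Fix $(i,j)\in[m]\times[n]$. I would verify Definition \ref{def:hom} for $(A_i,B_j)$ with parameter $5\delta$ and with $\gamma=\epsilon$ (the two roles), following exactly the heuristic of Remark \ref{rem:upstairs-hom}. Take $r\in[0,1]$ from property $(7)$ and $s\in[0,1]$ from property $(6)$-companion, i.e.\ take $A'\seq A_i$ to be (the zeroset corresponding to) the ``good'' set inside $[\psi_i(x)\geq\eta]$ from property $(6)$ and $B'\seq B_j$ the good set from property $(5)$; then $(5)$ gives $\mu(A_i\setminus A')<\gamma\mu(A_i)$-type control — actually one wants the $V'$-sets from $(5)$ and $(6)$ — and the clean way is: let $B'\seq B_j$ consist of those $b$ with $\varphi(x,b)\approx_{3\delta}\zeta^\delta_\varphi(b,\cbar_i)$ on most of $A_i$ (so $\nu(B')\geq(1-\gamma)\nu(B_j)$ by $(5)$, after intersecting with the $(7)$-set), and symmetrically $A'$. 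For $b\in B'$: combine $(5)$ (so $\varphi(x,b)\approx_{3\delta}\zeta^\delta_\varphi(b,\cbar_i)$ on $\geq(1-\gamma)\mu(A_i)$ of $A_i$) with $(7)$ ($\zeta^\delta_\varphi(b,\cbar_i)\approx_{2\delta}r$ for $b$ in the $(7)$-set) to get, by the triangle inequality, $\varphi(a,b)\approx_{5\delta}r$ for $\mu$-most $a\in A_i$; dually with $(6)$ and $(8)$ to get $\varphi(a,b)\approx_{5\delta}s$ for $\nu$-most $b\in B_j$, for each $a\in A'$. A little bookkeeping — reconciling ``$(1-\gamma)\mu(V_i)$'' against ``$(1-\gamma)\mu([\psi_i(x)\geq\eta])$'' using the $\tau$-slack already built into the lemma — finishes Definition \ref{def:hom} $(i)$--$(iv)$. \textbf{The main obstacle} is bookkeeping, not mathematics: being careful that the ``almost all'' clauses in properties $(5)$--$(8)$ are all phrased relative to $\mu(\psi_i(x)>0)=\mu(V_i)$ whereas the homogeneity definition measures defects against $\mu(A_i)=\mu(\psi_i(x)\geq\eta)$, and that $\omega$-saturation legitimately transports everything from $S_\varphi(M)$ to $M^x$ and $M^y$ while preserving the Boolean and order structure needed for the induced finitely additive measures to make sense. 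Once those identifications are in place the three bullets fall out of $(1)$--$(8)$ directly.
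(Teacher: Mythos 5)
Your proposal is correct and takes essentially the same approach as the paper: apply Lemma \ref{lem:upstairs-hom}, set $A_i=\{a:\psi_i(a)\geq\eta\}$ and $B_j=\{b:\theta_j(b)\geq\eta\}$, and verify homogeneity via the triangle inequality as sketched in Remark \ref{rem:upstairs-hom}. The one point you flag as a potential obstacle — that $(5)$--$(8)$ bound defects against $\mu(\psi_i(x)>0)$ while Definition \ref{def:hom} measures against $\mu(A_i)=\mu(\psi_i(x)\geq\eta)$ — is in fact a non-issue since $\mu(\psi_i(x)>0)\geq\mu(\psi_i(x)\geq\eta)$, so the lemma's conclusion is already stronger than what homogeneity requires (and for the same reason no slack $\gamma'<\gamma$ is needed).
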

\begin{proof}
Apply Lemma \ref{lem:upstairs-hom}. Let $A_i$ be defined by $\psi_i(x)\geq\eta$, and $B_j$ be defined by $\theta_j(y)\geq\eta$. Then $A_1,\ldots,A_m$ are pairwise disjoint explicit $\varphi$-zerosets, and $B_1,\ldots,B_n$ are pairwise disjoint explicit $\varphi^*$-zerosets. Let $A_0=M^x\backslash \bigcup_{i=1}^mA_i$ and $B_0=M^y\backslash \bigcup_{j=1}^nB_j$. Then $\mu(A_0)\leq\epsilon\mu(A_1)$ and $\nu(B_0)\leq\epsilon \nu(B_1)$ by parts $(3)$ and $(4)$ of the lemma. As outlined in Remark \ref{rem:upstairs-hom}, it follows from parts $(5)$-$(8)$ and the triangle inequality that for all $(i,j)\in [m]\times[n]$, the pair $(A_i,B_j)$ is  $(5\delta;\gamma)$-homogeneous for $\mu$ and $\nu$. 
\end{proof}

\begin{remark}\label{rem:ex-vert} 
In the previous theorem, $A_0$ and $B_0$ serve as small ``exceptional" sets, and  are \emph{complements} of zerosets. These sets can be removed at the cost of  weaker control of the error in some of the homogeneous pairs. More specifically, in the context of the theorem, let $A'_1=A_0\cup A_1$. Then for any $1\leq j\leq n$, since $(A_1,B_j)$ is $(5\delta;\gamma)$-homogeneous and $\mu(A_0)\leq\epsilon\mu(A_1)$, it follows that $(A'_1,B_j)$ is $(5\delta;\gamma,\epsilon+\gamma)$-homogeneous. Similarly, if $B'_1=B_0\cup B_1$ then $(A_i,B'_1)$ is $(5\delta;\gamma,\epsilon+\gamma)$-homogeneous for all $1\leq i\leq m$. Moreover, $(A'_1,B'_1)$ is $(5\delta;\epsilon+\gamma)$-homogeneous. On the other hand, $A'_1$ and $B'_1$ are no longer zerosets. In the continuous setting, we expect that is not generally possible to obtain partitions into zerosets with no exceptional sets. However, one could replace $A_0$ with an explicit $\varphi$-zeroset $A'_0$, which contains $A_0$ and  satisfies $\mu(A'_0)\leq (1+\gamma)\mu(A_0)$. After a similar adjustment to $B_0$, this would result in coverings (rather than partitions) of $M^x$ and $M^y$ by finitely many zerosets such that all pairs are homogeneous (with parameters as above), and there is arbitrarily small overlap between the pieces in each partition. 
\end{remark}

\begin{remark}\label{rem:MaPi}
By applying Theorem \ref{thm:main-upstairs} in the setting that $T$ is a classical discrete theory, and $\varphi(x,y)$ is a stable formula, we recover the model-theoretic version of the Malliaris-Shelah stable regularity lemma \cite{MaSh}, proved by Malliaris and Pillay in \cite{MaPi}. However, we have introduced a finer control on the error in the homogeneous pairs by means of the parameter $\gamma$,  \emph{which is allowed to depend on the size of the partition}. This will be reflected in our main result below for stable functions on finite sets (Theorem \ref{thm:main-finite-thm}), where the degree of homogeneity is controlled by an arbitrarily chosen ``decay function". 
\end{remark}

\section{Stable regularity for functions on finite sets}\label{sec:finite}

We now restate Theorem \ref{prethm:main-finite-thm}, which is the main finite result. The proof is given in Section \ref{sec:main-finite-proof} below.

\begin{theorem}\label{thm:main-finite-thm}
Let $V$ and $W$ be finite sets, and suppose $f\colon V\times W\to [0,1]$ is a $(k,\delta)$-stable function. Then for any $\epsilon>0$ and any function $\sigma\colon \N\to (0,1)$,  there are partitions $V=V_0\cup V_1\cup\ldots\cup V_m$ and $W=W_0\cup W_1\cup\ldots\cup W_n$, with $m,n\leq O_{k,\delta,\epsilon,\sigma}(1)$, satisfying the following properties.
\begin{enumerate}[\hspace{5pt}$\ast$]
\item For all $(i,j)\in [m]\times [n]$, the pair $(V_i,W_j)$ is $(5\delta+\epsilon;\sigma(mn))$-homogeneous.

\item $|V_0|\leq\epsilon|V_1|$ and $|W_0|\leq \epsilon|W_1|$.
\end{enumerate}
\end{theorem}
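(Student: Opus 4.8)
The plan is to deduce Theorem~\ref{thm:main-finite-thm} from the model-theoretic statement (Lemma~\ref{lem:upstairs-hom}/Theorem~\ref{thm:main-upstairs}) by a standard ultraproduct/pseudofinite transfer argument, handling the subtlety that $\gamma$ in the upstairs statement is allowed to depend on the size of the partition. So suppose the theorem fails: then there exist $k$, $\delta$, $\epsilon>0$, and a decay function $\sigma$, together with a sequence of finite sets $(V_s,W_s)_{s\in\N}$ and $(k,\delta)$-stable functions $f_s\colon V_s\times W_s\to[0,1]$, for which no partition into $\le s$ pieces each works. Fix a nonprincipal ultrafilter $\cU$ on $\N$ and form the metric ultraproduct. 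Concretely, view each $(V_s,W_s,f_s)$ as a (two-sorted) continuous $\cL$-structure $M_s$ with a single $[0,1]$-valued predicate $\varphi(x,y)$ interpreted as $f_s$, let $M=\prod_\cU M_s$, and let $\varphi^M$ be the induced formula. By {\L}o\'s's theorem and the fact (Lemma~\ref{lem:stableopen}, or the Ramsey-type observation in Remark~\ref{rem:stable-graphs} together with compactness) that $(k,\delta)$-stability is expressible, $\varphi(x,y)$ is $\delta$-stable in $\Th(M)$ (more precisely $\delta'$-stable for some $\delta'$ slightly below $\delta$; we will absorb the loss into the $+\epsilon$ in the statement). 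By Fact~\ref{fact:ultrasat}, $M$ is $\omega_1$-saturated, hence $\omega$-saturated. Let $\mu,\nu$ be the pseudofinite average-value functionals (equivalently, the corresponding Keisler measures) on $S_\varphi(M)$, $S_{\varphi^*}(M)$ respectively, arising as ultralimits of the normalized counting measures on the $M_s$.

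Next I would apply Theorem~\ref{thm:main-upstairs} (really the more precise Lemma~\ref{lem:upstairs-hom}) to $M$, $\varphi$, $\mu$, $\nu$, and the given $\epsilon$. This produces $m,n\ge 1$ \emph{first}, depending only on $k,\delta,\epsilon$ (and $\sigma$ only through the eventual bound, not yet). Now, crucially, having fixed $m,n$, choose $\gamma:=\sigma(mn)$ and feed this $\gamma$ back into Lemma~\ref{lem:upstairs-hom} to obtain explicit $\varphi$-formulas $\psi_1,\dots,\psi_m$, explicit $\varphi^*$-formulas $\theta_1,\dots,\theta_n$, parameter tuples $\cbar_i,\dbar_j$, and $\eta>0$ satisfying conditions $(1)$--$(8)$. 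Define, upstairs, $A_i=[\psi_i(x)\ge\eta]$, $A_0=M^x\setminus\bigcup_i A_i$, and dually $B_j,B_0$; these are explicit $\varphi$-zerosets (resp.\ $\varphi^*$-zerosets), and as in the proof of Theorem~\ref{thm:main-upstairs} each pair $(A_i,B_j)$ is $(5\delta;\gamma)$-homogeneous for $\mu,\nu$ with $\mu(A_0)\le\epsilon\mu(A_1)$, $\nu(B_0)\le\epsilon\nu(B_1)$.

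The heart of the argument — and the main obstacle — is the transfer back down. Since each $A_i$ is an explicit $\varphi$-zeroset over $M$, it is defined by a finitary formula in the parameters $\cbar_i$ (a lattice combination of conditions $\varphi(x,b)\in D$); pick representatives of these parameters in the $M_s$ and let $A_i^s\subseteq V_s$ be the correspondingly-defined set, so that $A_i=\prod_\cU A_i^s$ (the zerosets are preserved under the ultraproduct because zerosets are "closed" conditions). Then all of the quantitative statements $(3)$--$(8)$ — each an inequality between measures of explicitly-defined zerosets, of the form $\mu(\text{zeroset})>(1-\gamma)\mu(\text{zeroset})$ or with strict $<$ — transfer to the finite structures on a set of indices in $\cU$: this uses that $\mu,\nu$ are ultralimits of normalized counting measures, so $\mu(X)=\lim_\cU |X^s|/|V_s|$, together with {\L}o\'s's theorem applied to the relevant bounded formulas (the strict inequalities give a small slack that survives the ultralimit). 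Here one must be careful about two points: (a) the $d$-metric approximations $\varphi(x,b)\approx_{3\delta}\zeta^\delta_\varphi(b,\cbar_i)$ and $\zeta^\delta_\varphi(y,\cbar_i)\approx_{2\delta}r$ involve the min-max formulas $\zeta^\delta_\varphi$, which are finitary and hence transfer, but the constants $r,s$ are real numbers that must be chosen (approximately) downstairs — one discretizes $[0,1]$ finely enough, or simply observes that "there exists $r$" is again a closed condition and $\omega$-saturation plus transfer handles it, with the $2\delta$ turning into $2\delta+(\epsilon/10)$; (b) the definability/complexity of $\zeta^\delta_\varphi$ does not depend on $s$, so the formulas defining $A_i^s,B_j^s$ have bounded complexity. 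Assembling: for $\cU$-almost-all $s$, the partitions $V_s=A_0^s\cup\dots\cup A_m^s$ and $W_s=B_0^s\cup\dots\cup B_n^s$ satisfy, by Remark~\ref{rem:upstairs-hom} and the triangle inequality downstairs, that each $(A_i^s,B_j^s)$ is $(5\delta+\epsilon;\sigma(mn))$-homogeneous (the extra $\epsilon$ absorbing all the approximation losses: $\delta\to\delta'$, the discretization of $r,s$, and the slack in the strict inequalities) and $|A_0^s|\le\epsilon|A_1^s|$, $|B_0^s|\le\epsilon|B_1^s|$, with $m,n\le O_{k,\delta,\epsilon,\sigma}(1)$. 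This contradicts the choice of the $f_s$ for any such $s>\max(m,n)$, completing the proof. I expect the bookkeeping around converting the finitely-many strict measure inequalities and the existential choices of $r,s$ into genuine finite statements (while keeping the complexity of the defining formulas uniform in $s$) to be the most delicate part; everything else is routine {\L}o\'s's theorem and the already-proved upstairs lemma.
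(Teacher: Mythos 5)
Your high-level strategy — contradiction, ultraproduct, apply Lemma~\ref{lem:upstairs-hom} to get $m,n$ first, then choose $\gamma=\sigma(mn)$ (the paper uses $\tfrac{1}{2}\sigma(mn)$), transfer conditions $(3)$--$(8)$ rather than the homogeneity predicate directly, absorb losses into the $+\epsilon$ — is exactly the paper's approach, and you correctly flag that the bookkeeping of the transfer is the delicate part. But there is one genuine gap in your proposal, and it sits precisely at the step you wave through with ``this uses that $\mu,\nu$ are ultralimits of normalized counting measures, so $\mu(X)=\lim_\cU |X^s|/|V_s|$.''

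That identity is \emph{false} in general, and the paper goes out of its way to say so: the second of the three stated difficulties (just before \S\ref{sec:transfer}) is the ``small discrepancy'' between the Keisler measure $\mu$ induced on $S_\varphi(M)$ by the pseudofinite average-value functional (via Riesz) and the ultralimit $\mu^*$ of the counting measures on the $M_s$. The issue: for a formula $\psi(x,\bbar)$, the set $[\psi(x,\bbar)>0]\subseteq S_\varphi(M)$ is open but not in general of the form $\prod_\cU\{a:\psi^{M_s}(a,\bbar^s)>0\}$ — a point $a\in M$ can have $\psi(a,\bbar)=0$ while $\psi(a^s,\bbar^s)>0$ for all $s$, or vice versa; and $\mu$ of an open set is defined by regularity from the functional, not by the $\cU$-limit of normalized cardinalities. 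The same caveat applies to your assertion $A_i=\prod_\cU A_i^s$ for a zeroset (or an $\geq\eta$ set) $A_i$: the condition $\lim_\cU\psi(a^s,\bbar^s)\geq\eta$ does not force $\psi(a^s,\bbar^s)\geq\eta$ for $\cU$-many $s$; you only get $\geq\eta-\rho$ for each $\rho>0$.

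The paper's remedy is Lemma~\ref{lem:blur}, which is not routine {\L}o\'{s} and requires the ad hoc expansion $\cL^+$: the $\boldsymbol{1}^+_\psi$ predicates are needed to even express $\mu^*$ as a definable quantity in the ultraproduct, and $\omega$-saturation is used to show the support of $\boldsymbol{1}^+_\psi$ on $S_x(M^+)$ contains $[\psi>0]$. The conclusion is only one-sided ($\mu\leq\mu^*$ on open conditions, $\mu^*\leq\mu$ on closed conditions), which is why Lemma~\ref{lem:finite-2} carefully pairs a $\geq\eta$ condition upstairs with a widened $\geq\lambda$ condition downstairs ($\lambda<\eta$), paying an error $\rho=\eta-\lambda$. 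The $P_{\theta,r}$ and $Q_{\theta,r}$ predicates are likewise needed to encode the measure inequalities (3)--(8) (which otherwise quantify over the normalized count of a definable set, not a first-order object) as $\cL^+$-formulas on which {\L}o\'{s} can act, and the strict inequalities supply the slack $\gamma\tau$ that survives the ultralimit. None of this appears in your proposal; you would need to either invent Lemma~\ref{lem:blur} and the $\cL^+$ machinery, or find a substitute argument showing the Keisler measure and the counting ultralimit agree well enough on the finitely many specific sets produced by Lemma~\ref{lem:upstairs-hom}. Until that is done, the transfer step does not go through.
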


Before proving this result, we make a few remarks. First, using the same calculations as in Remark \ref{rem:ex-vert}, one can remove the exceptional sets $V_0$ and $W_0$ at the cost of weaker error in \emph{some} of the homogeneous pairs. For example, by choosing $\sigma(n)=\frac{1}{2}\epsilon$ in the previous result, we obtain the following simpler (but weaker) version of Theorem \ref{thm:main-finite-thm} without the decay function $\sigma$ or the exceptional sets.

\begin{theorem}\label{thm:main-finite2}
Let $V$ and $W$ be finite sets and suppose $f\colon V\times W\to [0,1]$ is a $(k,\delta)$-stable function. Then for any $\epsilon>0$, there are partitions $V= V_1\cup\ldots\cup V_m$ and $W=W_1\cup\ldots\cup W_n$, with $m,n\leq O_{k,\delta,\epsilon}(1)$, such that $(V_i,W_j)$ is $(5\delta+\epsilon;\epsilon)$-homogeneous for all $(i,j)\in [m]\times [n]$. 
\end{theorem}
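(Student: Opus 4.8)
\textbf{Proof plan for Theorem \ref{thm:main-finite2}.} The plan is to derive this from Theorem \ref{thm:main-finite-thm} by a suitable choice of the decay function. Indeed, I would simply apply Theorem \ref{thm:main-finite-thm} with the same $\epsilon$ and with the constant decay function $\sigma\colon \N\to (0,1)$ given by $\sigma(n)=\tfrac12\epsilon$ for all $n$ (or any constant value in $(0,1)$ that is at most $\epsilon$; if $\epsilon\geq 2$ the statement is vacuous since $(5\delta+\epsilon;\epsilon)$-homogeneity is automatic). This produces partitions $V=V_0\cup V_1\cup\cdots\cup V_m$ and $W=W_0\cup W_1\cup\cdots\cup W_n$ with $m,n\leq O_{k,\delta,\epsilon,\sigma}(1)=O_{k,\delta,\epsilon}(1)$ (since $\sigma$ is now determined by $\epsilon$), such that each pair $(V_i,W_j)$ with $(i,j)\in[m]\times[n]$ is $(5\delta+\epsilon;\sigma(mn))$-homogeneous, i.e. $(5\delta+\epsilon;\tfrac12\epsilon)$-homogeneous, and $|V_0|\leq\epsilon|V_1|$, $|W_0|\leq\epsilon|W_1|$.

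It remains to absorb the exceptional sets $V_0$ and $W_0$ into $V_1$ and $W_1$, respectively, and to check the homogeneity degradation. This is exactly the computation sketched in Remark \ref{rem:ex-vert} (in the model-theoretic setting), carried out now for normalized counting measures on finite sets. Set $V_1'=V_0\cup V_1$ and $W_1'=W_0\cup W_1$, and keep $V_i'=V_i$, $W_j'=W_j$ for $i,j\geq 2$. I would argue as follows: since $(V_1,W_j)$ is $(5\delta+\epsilon;\tfrac12\epsilon)$-homogeneous and $|V_0|\leq\epsilon|V_1|\leq\epsilon|V_1'|$, the witnessing sets $V'\seq V_1$, $W'\seq W_j$ and values $r,s$ still work for $(V_1',W_j)$ after re-checking the four conditions in Definition \ref{def:hom}: condition $(i)$ still holds since $|V'|\geq(1-\tfrac12\epsilon)|V_1|\geq(1-\tfrac12\epsilon)(1+\epsilon)^{-1}|V_1'|\geq(1-\epsilon)|V_1'|$ (a direct inequality, valid for the relevant range of $\epsilon$); conditions $(iii)$ and $(iv)$ acquire at most an additive $\epsilon$ in the error coming from the extra $V_0$-vertices relative to $|V_1'|$, so they hold with error $\tfrac12\epsilon+\epsilon\leq\epsilon$ after the elementary estimate; condition $(ii)$ for $W_j$ is unchanged. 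Thus $(V_1',W_j)$ is $(5\delta+\epsilon;\epsilon)$-homogeneous for all $j$. By the symmetric argument $(V_i,W_1')$ is $(5\delta+\epsilon;\epsilon)$-homogeneous for all $i$, and combining both adjustments $(V_1',W_1')$ is $(5\delta+\epsilon;\epsilon)$-homogeneous as well. All other pairs $(V_i',W_j')=(V_i,W_j)$ are already $(5\delta+\epsilon;\tfrac12\epsilon)$-homogeneous, hence $(5\delta+\epsilon;\epsilon)$-homogeneous a fortiori since homogeneity is monotone in the error parameters. The resulting $V=V_1'\cup\cdots\cup V_m$ and $W=W_1'\cup\cdots\cup W_n$ are the desired partitions, with $m,n\leq O_{k,\delta,\epsilon}(1)$.

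The only real content here is Theorem \ref{thm:main-finite-thm} itself; the main (and only) obstacle in the present deduction is the bookkeeping in the exceptional-set absorption, namely verifying that enlarging $V_1$ to $V_1'$ degrades the homogeneity parameters only in the controlled way claimed. This is a routine but slightly fiddly sequence of inequalities with normalized counting measure, entirely parallel to the measure-theoretic computation in Remark \ref{rem:ex-vert}; I expect no essential difficulty, only care in tracking the constants so that $\tfrac12\epsilon$ plus the absorbed error stays at most $\epsilon$. One should also note the trivial edge cases (e.g. if some $V_i$ or $W_j$ is empty, or if $\epsilon$ is large enough that the conclusion is vacuous) but these do not affect the argument.
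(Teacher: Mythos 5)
Your approach is the same as the paper's: apply Theorem \ref{thm:main-finite-thm} with a constant decay function and then absorb $V_0,W_0$ into $V_1,W_1$ as in Remark \ref{rem:ex-vert}. However, the constants you chose do not close. Applying Theorem \ref{thm:main-finite-thm} with the \emph{same} $\epsilon$ gives only $|V_0|\leq\epsilon|V_1|$, and absorption then adds roughly a full $\epsilon$ to the error on top of $\sigma(mn)=\tfrac12\epsilon$. Concretely, your chain $(1-\tfrac12\epsilon)(1+\epsilon)^{-1}\geq 1-\epsilon$ is equivalent to $1-\tfrac12\epsilon\geq 1-\epsilon^2$, i.e.\ $\epsilon\geq\tfrac12$, which is exactly the \emph{wrong} regime; for small $\epsilon$ the inequality fails. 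Likewise the asserted ``$\tfrac12\epsilon+\epsilon\leq\epsilon$'' for conditions $(iii)$ and $(iv)$ reads $\tfrac32\epsilon\leq\epsilon$, which is never true for $\epsilon>0$. So with your parameter choice the enlarged pairs $(V'_1,W_j)$ come out only $(5\delta+\epsilon;\tfrac32\epsilon)$-homogeneous (approximately), not $(5\delta+\epsilon;\epsilon)$-homogeneous.

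The fix is simple and in the spirit of what you wrote: apply Theorem \ref{thm:main-finite-thm} with $\epsilon/2$ in place of $\epsilon$ \emph{in both roles}, i.e.\ with error parameter $\epsilon/2$ and constant decay function $\sigma(n)=\epsilon/2$. Then $|V_0|\leq(\epsilon/2)|V_1|$, hence $|V'_1|\leq(1+\tfrac12\epsilon)|V_1|$, and the absorption estimate becomes $(1-\tfrac12\epsilon)(1+\tfrac12\epsilon)^{-1}\geq 1-\epsilon$, which is equivalent to $0\geq-\tfrac12\epsilon^2$ and so holds for all $\epsilon$. The same estimate handles condition $(iii)$, and the pair $(V'_1,W'_1)$ with both sides enlarged is treated identically. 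All four conditions of Definition \ref{def:hom} then land within the $\epsilon$-bound, $5\delta+\tfrac12\epsilon\leq5\delta+\epsilon$, and $m,n$ still depend only on $k,\delta,\epsilon$ since the new parameters are determined by $\epsilon$. One subtlety you handled correctly (implicitly): when passing from $V_1$ to $V'_1$ the witnessing set $V'$ must stay inside $V_1$ rather than be enlarged to $V'\cup V_0$, since condition $(iv)$ must hold for every $a\in V'$ and nothing is known about $f$ on $V_0$; the resulting slack in condition $(i)$ is what the smaller choice of parameters is absorbing.
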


The proof of Theorem \ref{thm:main-finite-thm} will also provide strong definability conditions on the sets $V_i$ and $W_j$. In order to give a precise formulation, we first define some terminology. Given a rational number $\alpha=\frac{r}{s}$, with $r,s\in\Z$, $s>0$, and $\gcd(r,s)=1$, define the \emph{complexity} of $\alpha$ to be $\max\{|r|,s\}$.\footnote{This is the standard number-theoretic ``height" function. However, no special properties of this function will be used other than that it defines a map from $\Q$ to $\N$.} The \emph{complexity} of a rational interval is the maximum complexity of its endpoints. 

Now let $f\colon V\times W\to [0,1]$ be a function and fix an integer $N\geq 1$. Then we say that a subset $V'\seq V$ is \emph{$f$-definable of complexity $N$} if
\[
\textstyle V'=\bigcup_{i=1}^m\bigcap_{j=1}^{n_i}\{a\in V:f(a,b_{i,j})\in D_{i,j}\}
\]
for some $b_{i,j}\in W$, some $m,n_1,\ldots,n_m\leq N$, and some closed rational intervals $D_{i,j}$ of complexity at most $N$. We analogously define $f$-definable subsets of $W$. Finally, we define a \emph{min-max $f$-function on $V$ of complexity $N$} to be a function of the form  
\[
\min_{1\leq i\leq m}\max_{1\leq j\leq n_i}f(x,y_{i,j})
\]
 for  some $m,n_1,\ldots,n_m\leq N$. Min-max $f$-functions on $W$ are defined analogously.
 
 \begin{remark}\label{rem:complexity}
 With the above terminology in hand, we can now elaborate on Theorem \ref{thm:main-finite-thm}. In particular, in the conclusion of the theorem, we also have:
\begin{enumerate}[(1)]
\item $V_1,\ldots,V_m, W_1,\ldots,W_n$ are  $f$-definable of complexity $O_{k,\delta,\epsilon,\sigma}(1)$.
\item There are min-max $f$-functions $\zeta_1(y,\xbar)$ on $W$ and $\zeta_2(x,\ybar)$ on $V$ of complexity $O_{k,\delta,\epsilon,\sigma}(1)$, and tuples $\cbar_1,\ldots,\cbar_n\in V^{|\xbar|}$ and $\dbar_1,\ldots,\dbar_m\in W^{|\ybar|}$ satisfying the following properties.
\begin{enumerate}[$\ast$]
\item If $1\leq i\leq m$, then for all $b\in W_s$, 
\[
|\{a\in V_i:f(a,b)\approx_{3\delta+\frac{1}{2}\epsilon}\zeta_1(b,\cbar_i)\}|>(1-\sigma(mn))|V_i|
\]
\item If $1\leq j\leq n$, then for all $a\in V_s$,
\[
|\{b\in W_j:f(a,b)\approx_{3\delta+\frac{1}{2}\epsilon}\zeta_2(a,\dbar_j)\}|>(1-\sigma(mn))|W_j|.
\]

\item For all $(i,j)\in [m]\times [n]$, there are rational numbers $r_{i,j},s_{i,j}\in [0,1]$ of complexity $O_{k,\delta,\epsilon,\sigma}(1)$ such that,  if we define
\begin{align*}
W'_{i,j} &\coloneqq \{b\in W_j:\zeta_1(b,\cbar_i)\approx_{2\delta+\frac{1}{2}\epsilon}r_{i,j}\}\text{ and }\\
V'_{i,j} &\coloneqq \{a\in V_i:\zeta_2(a,\dbar_j)\approx_{2\delta+\frac{1}{2}\epsilon}s_{i,j}\},
\end{align*}
then $|W'_{i,j}|>(1-\sigma(mn))|W_j|$ and $|V'_{i,j}|>(1-\sigma(mn))|V_i|$.
\end{enumerate}
In particular, it follows from the above properties that for all $(i,j)\in [m]\times [n]$, $(V_i,W_j)$ is $(5\delta+\epsilon,\sigma(mn))$-homogeneous, witnessed by $r_{i,j}$, $s_{i,j}$, $W'_{i,j}$, and $V'_{i,j}$. 
\end{enumerate}
See Remark \ref{rem:complexity-proof} for a summary of how the proof yields these extra details.
\end{remark}

The decay function $\sigma$ in Theorem \ref{thm:main-finite-thm} leads to  strong control of the error in the homogeneous pairs. This will be used to match our work to the setting of analytic regularity for functions (see Section \ref{sec:analytic}). Another application of the decay function will appear in Section \ref{sec:equip}, where we modify Theorem \ref{thm:main-finite-thm} so that it yields equipartitions (at the cost of the definability described in Remark \ref{rem:complexity}). 

We now start toward the proof of Theorem \ref{thm:main-finite-thm}. Despite the similarity between this result and Theorem \ref{thm:main-upstairs}, the proof will not be as straightforward as corresponding results in discrete logic. There are essentially three reasons for this. The first is that the zerosets in Theorem \ref{thm:main-upstairs} are not necessarily definable (in the strict sense of continuous logic), and so we need to argue directly with the underlying formulas used to construct these zerosets. The second complication has to do with  the small discrepancy that exists between an abstract ultralimit of normalized counting measures on finite sets when compared to the  pseudofinite normalized average value functional (see Lemma \ref{lem:blur} below). Finally, rather than working directly with homogeneous pairs (which involve an ``if-then" statement with measures), it will be much cleaner to instead transfer the individual underlying components that control homogeneity, as given by Lemma \ref{lem:upstairs-hom}.

\subsection{Approximating the counting measure}\label{sec:transfer}
Let $\Def_{\Q}(\R,<)$ be the set of $D\seq\R$ that are first-order definable over $\Q$ in $(\R,<)$. 

Fix a countable continuous language $\cL$. Let $\cF$ be a countable set of (finitary) $\cL$-formulas, which is closed under the connectives  $\alpha_D$ for $D\in \Def_{\Q}(\R,<)$ (see Definition \ref{def:alphaU}). We define an expanded language $\cL^+$ consisting of $\cL$ together with the following new symbols.
\begin{enumerate}[$(i)$]
\item For each $\psi(\xbar)\in\cF$, add   a new $[0,1]$-valued predicate symbol $\boldsymbol{1}^+_{\psi}(\xbar)$.
\item For each rational $r\in [0,1]$ and  $\theta(x,\ybar)\in\cF$, with $x$ a singleton, add  new $[0,1]$-valued predicate symbols $P_{\theta,r}(\ybar)$ and $Q_{\theta,r}(\ybar)$.
\end{enumerate}
Each new predicate is given a trivial modulus of uniform continuity.

Let $M$ be a finite $\cL$-structure  with the discrete metric. We expand $M$ to an $\cL^+$-structure $M^+$ as follows. For each $\psi(\xbar)\in\cF$, interpret $\boldsymbol{1}^+_\psi(\xbar)$ in $M^+$ as the indicator function of $\psi^M(\xbar)>0$. For each rational $r\in [0,1]$ and $\theta(x,\ybar)\in \cF$, define 
\begin{align*}
P^{M^+}_{\theta,r}\colon M^{\ybar}\to [0,1] &\textnormal{ so that }\bbar\mapsto |\theta^M(x,\bbar)\leq r|/|M^x|,\text{ and }\\
Q^{M^+}_{\theta,r}\colon M^{\ybar}\to [0,1] &\textnormal{ so that }\bbar\mapsto |\theta^M(x,\bbar)\geq r|/|M^x|.
\end{align*}

Now let $(M_s)_{s\in\N}$ be a collection of finite $\cL$-structures with discrete metrics. Fix an ultrafilter $\cU$ on $\N$ and let $M^+=\prod_{\cU}M^+_s$. Let $M=\prod_{\cU}M_s$ be the reduct of $M^+$ to $\cL$. Note that $\cL$ and $\cL^+$ are both countable, and so $M$ and $M^+$ are both $\omega_1$-saturated by Fact \ref{fact:ultrasat}. We identify $M$ with its underlying universe, and use $M^+$ when emphasis on the expanded language is necessary.

Given a sort $x$, let $\mu^s_x$ and $\lf^s_x$ denote the normalized counting measure and average value functional on $(M_s^+)^x$. Let $\lf_x=\lim_{\cU}\lf^s_x$ be the pseudofinite average value functional, and let 
 $\mu_x$ denote the Keisler measure induced by $\lf_x$.  When there is no possibility of confusion, we will omit the subscript $x$ in the previous notation. 
 
 Given an $\cL^+$-formula $\psi(x,\bbar)$ over $M$ and a set $B\seq\R$, define
\[
\mu^*(\psi(x,\bbar)\in B)\coloneqq\lim_{\cU}\mu^s(\psi^{M_s}(x,\bbar^s)\in B),
\]
where $(\bbar^s)_{s\geq 0}$ is a choice of representative for $\bbar$. Note that this is well-defined since the metric on $M$ is discrete. 

\begin{lemma}\label{lem:blur}
Let $\psi(x,\bbar)$ be an $\cL$-formula over $M$, with $\psi(x,\ybar)\in\cF$.
\begin{enumerate}[$(a)$]
\item If $U\in \Def_{\Q}(\R,<)$ is open then $\mu(\psi(x,\bbar)\in U)\leq\mu^*(\psi(x,\bbar)\in U)$.
\item If $C\in \Def_{\Q}(\R,<)$ is closed then $\mu^*(\psi(x,\bbar)\in C)\leq\mu(\psi(x,\bbar)\in C)$.
\end{enumerate} 
\end{lemma}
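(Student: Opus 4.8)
The plan is to prove $(a)$ directly by a standard sandwiching argument and then deduce $(b)$ from it by passing to complements. The fact doing the real work is that $\mu$ is the Keisler measure induced by the pseudofinite average value functional $\lf_x=\lim_{\cU}\lf^s_x$, so that for every finitary $\cL$-formula $\chi(x,\bbar)$ over $M$ one has
\[
\int_{S_x(M)}\chi\,d\mu=\lf_x(\chi(x,\bbar))=\lim_{\cU}\frac{1}{|M_s^x|}\sum_{a\in M_s^x}\chi^{M_s}(a,\bbar^s)
\]
for any representative $(\bbar^s)_s$ of $\bbar$ (independence of the choice being exactly the discreteness-of-the-metric observation used to make $\mu^*$ well-defined). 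Since I will only ever integrate connective combinations of $\psi$, the expansion $\cL^+$ plays no role beyond supplying $\psi\in\cF$ as a formula to manipulate.

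For $(a)$, fix an open $U\in\Def_{\Q}(\R,<)$. Because $\alpha_U(t)>0$ exactly when $t\in U$, I would write $[\psi(x,\bbar)\in U]$ as the increasing union over $n\geq 1$ of the closed sets $[\alpha_U(\psi(x,\bbar))\geq\tfrac1n]$, whence $\mu(\psi(x,\bbar)\in U)=\sup_n\mu(\alpha_U(\psi(x,\bbar))\geq\tfrac1n)$ by continuity of $\mu$ from below. For each $n$ I would use the connective $\beta_n(t)\coloneqq\min\{n\alpha_U(t),1\}$, which is $[0,1]$-valued, dominates the indicator of $\{\alpha_U\geq\tfrac1n\}$, and vanishes off $U$. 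Integrating the first bound gives $\mu(\alpha_U(\psi(x,\bbar))\geq\tfrac1n)\leq\lf_x(\beta_n(\psi(x,\bbar)))$, and expanding $\lf_x$ as above while applying the ``vanishes off $U$'' bound pointwise over $a\in M_s^x$ gives $\lf_x(\beta_n(\psi(x,\bbar)))\leq\lim_{\cU}\mu^s(\psi^{M_s}(x,\bbar^s)\in U)=\mu^*(\psi(x,\bbar)\in U)$. Taking $\sup_n$ completes $(a)$.

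For $(b)$, let $C\in\Def_{\Q}(\R,<)$ be closed, so $U\coloneqq\R\setminus C$ is open and still lies in $\Def_{\Q}(\R,<)$ (a Boolean algebra). For each type $p$ — and likewise for each $a\in M_s^x$ — exactly one of $\psi(p,\bbar)\in C$, $\psi(p,\bbar)\in U$ holds, so $\mu(\psi(x,\bbar)\in C)=1-\mu(\psi(x,\bbar)\in U)$ and $\mu^*(\psi(x,\bbar)\in C)=1-\mu^*(\psi(x,\bbar)\in U)$, and $(b)$ falls out of $(a)$ applied to $U$ after rearranging. I do not expect a real obstacle; the only point needing care is the bookkeeping between $\int(\cdot)\,d\mu$ and the finite proportions $\mu^s$, together with the recognition that equality genuinely fails — this is precisely the discrepancy the lemma is meant to quantify, namely that $\alpha_U(\psi^{M_s}(a,\bbar^s))$ can be positive yet tend to $0$ along $\cU$, so that ``$\psi>0$ in $M$'' is strictly weaker than ``$\psi>0$ in $\cU$-many of the $M_s$''.
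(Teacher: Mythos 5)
Your argument is correct, but it takes a genuinely different route from the paper's. The paper first reduces to the single sub-basic case $U=(0,\infty)$ (using the closure of $\cF$ under the connectives $\alpha_D$ and complementation), and then leans on the expanded language $\cL^+$: the auxiliary predicate $\boldsymbol{1}^+_{\psi}$ induces a continuous $\{0,1\}$-valued function on $S_x(M^+)$ whose support has $\mu$-measure exactly $\mu^*(\psi(x,\bbar)>0)$, and the containment $[\psi(x,\bbar)>0]\seq\supp\boldsymbol{1}^+_{\psi}$ is proved by first checking it on realized points and then transferring to types via $\omega_1$-saturation of the ultraproduct. You avoid all of this: you sandwich the indicator of $U$ between the Lipschitz connectives $\beta_n(t)=\min\{n\alpha_U(t),1\}$, use countable additivity (continuity from below) of the Radon measure $\mu$ on the increasing union $[\psi(x,\bbar)\in U]=\bigcup_n[\alpha_U(\psi(x,\bbar))\geq\tfrac1n]$, and then apply the defining ultralimit formula for $\lf_x$ together with the pointwise bounds $\boldsymbol{1}_{[\alpha_U\geq 1/n]}\leq\beta_n\leq\boldsymbol{1}_U$; part $(b)$ is the same complementation step as in the paper. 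What each approach buys: yours is more elementary and self-contained for this lemma --- it needs neither the indicator predicates of clause $(i)$ of the $\cL^+$ expansion, nor saturation, nor even that $U$ be rationally definable or that $\psi\in\cF$ (only that $\beta_n(\psi(x,\ybar))$ is a finitary $\cL$-formula, which holds since $\beta_n$ is an $n$-Lipschitz connective, so the ultralimit description of $\lf$ applies) --- whereas the paper's proof fits the machinery it has already set up, with the same expansion-and-transfer technique reused (via the predicates $P_{\theta,r},Q_{\theta,r}$) in Lemma \ref{lem:finite-2}. The only bookkeeping point to keep in mind is that the measure induced by $\lf_x$ lives on $S_x(M^+)$, so your computation on $S_x(M)$ is really with the pushforward; since all the sets and formulas involved are $\cL$-definable, the values agree, exactly as the paper notes when discussing pushforwards of Keisler measures.
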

\begin{proof}
Note that  if $C\seq \R$ is closed then $\mu(\psi(x,\bbar)\in C)=1-\mu(\psi(x,\bbar)\in\R\backslash C)$ (and similarly for $\mu^*$). Moreover, if $U\seq \R$ is open, then $\psi(x,\ybar)\in U$ is logically equivalent to $\alpha_U(\psi(x,\ybar))>0$. So in light of the assumptions on $\cF$, it suffices to just prove part $(a)$, and only consider the case of $\psi(x,\bbar)>0$ (i.e., $U=(0,\infty)$). 

The predicate $\boldsymbol{1}^+_{\psi}(x,\bbar)$ induces a continuous $\{0,1\}$-valued function on $S_x(M^+)$. Let $X\seq S_x(M^+)$ denote the support of this function. Then
\begin{multline*}
\mu(X)=\int_{S_x(M^+)}\boldsymbol{1}^+_{\psi}(x,\bbar)\,~d\mu=\lf(\boldsymbol{1}^+_\psi(x,\bbar))=\lim_{\cU}\lf^s(\boldsymbol{1}^+_{\psi}(x,\bbar^s))\\
=\lim_{\cU}\mu^s(\psi^{M_s}(x,\bbar^s)>0)=\mu^*(\psi(x,\bbar)>0).
\end{multline*}
So to prove the result it suffices to show $[\psi(x,\bbar)>0]\seq X$. 

Fix $p\in[\psi(x,\bbar)>0]$, and set $r=\psi(p,\bbar)$. Suppose $a\in M^x$ is such that $|\psi(a,\bbar)-r|\leq\frac{r}{2}$. Since $\psi(a,\bbar)=\lim_{\cU}\psi(a^s,\bbar^s)$, it follows that for $\cU$-many $s\geq 0$, we have $|\psi(a^s,\bbar^s)-r|<r$. So for $\cU$-many $s\geq 0$, we have $\psi(a^s,\bbar^s)>0$, i.e., $\boldsymbol{1}^+_{\psi}(a^s,\bbar^s)=1$. Therefore  $\boldsymbol{1}^+_{\psi}(a,\bbar)=\lim_{\cU}\boldsymbol{1}^+_{\psi}(a^s,\bbar^s)=1$. Altogether, for any $a\in M^x$ if $|\psi(a,\bbar)-r|\leq\frac{r}{2}$ then $\boldsymbol{1}^+_{\psi}(a,\bbar)=1$. 

Now, since $M^+$ is $\omega$-saturated, it follows (e.g., via \cite[Proposition 7.14]{BBHU}) that for any $q\in S_x(M^+)$, if $|\psi(q,\bbar)-r|\leq\frac{r}{2}$ then $\boldsymbol{1}^+_{\psi}(q,\bbar)=1$. Since $\psi(p,\bbar)=r$, we therefore have $\boldsymbol{1}_{\psi}(p,\bbar)=1$, i.e., $p\in X$, as desired. 
\end{proof}

Note that the previous lemma did not involve the extra predicates introduced above in $(ii)$. These will be used in the next subsection.

\subsection{Proof of Theorem \ref{thm:main-finite-thm}}\label{sec:main-finite-proof}

Suppose the theorem fails for some fixed $k\geq 1$, $\delta,\epsilon>0$, and $\sigma\colon \N\to (0,1)$. Then for all $s\geq 0$, we have a $(k,\delta)$-stable function $f_s\colon V_s\times W_s\to [0,1]$ that admits no partition as in the statement of Theorem \ref{thm:main-finite-thm}, with $m,n\leq s$.

Let $\cL$ be a continuous language with two sorts $V$ and $W$, along with a $[0,1]$-valued binary predicate symbol $f$ on $V\times W$ with trivial modulus of uniform continuity.  For each $s\geq 0$, define an $\cL$-structure $M_s$ such that $V(M_s)=V_s$, $W(M_s)=W_s$, and  $f^{M_s}=f_s$. We equip $M_s$ with the discrete metric. 

Let $\cF$ be a countable set of $\cL$-formulas which contains $f(x,y)$ and is closed under variable substitution as well as the connectives $\max$, $\min$, $|\psi -\theta |$, $\psi\dotminus r$ and $r\dotminus \psi$ for rational $r$, and $\alpha_D$ for $D\in \Def_{\Q}(\R,<)$ (see Definition \ref{def:alphaU}).   Let $M^+_s$ be the expansion of $M_s$ to an $\cL^+$-structure as described in Section \ref{sec:transfer}. Fix a nonprincipal ultrafilter $\cU$ on $\N$, and let $M^+=\prod_{\cU}M^+_s$.  Let $M=\prod_{\cU}M_s$ be the reduct of $M^+$ to $\cL$, and let $\mu^s_x$, $\mu_x$,  and $\mu^*_x$ be as defined in Section \ref{sec:transfer}. We will only use the case where $x$ is a singleton (in either $V$ or $W$), and so we will just write $\mu$, $\mu^s$, and $\mu^*$ (the relevant sort $V$ or $W$ will be clear from context). Set $T=\Th(M)$.

\begin{lemma}\label{lem:finite-1}
$f(x,y)$ is $\delta'$-stable in $T$ for any $\delta'>\delta$.
\end{lemma}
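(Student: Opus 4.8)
The plan is to transfer $(k,\delta)$-stability of each $f_s$ to a statement about $f(x,y)$ in the ultraproduct $M$, using {\L}o\'{s}'s Theorem together with Lemma \ref{lem:stableopen} (the ``$\delta$-stability is open'' lemma). First I would recall that, by hypothesis, each $f_s\colon V_s\times W_s\to [0,1]$ is $(k,\delta)$-stable, meaning there do not exist $a_1,\ldots,a_k\in V_s$ and $b_1,\ldots,b_k\in W_s$ with $|f_s(a_i,b_j)-f_s(a_j,b_i)|\geq\delta$ for all $i<j$. I would then consider, for each $s$, the $\cL$-sentence asserting the negation of the existence of such a configuration; concretely, using the formula
\[
\theta(x_1,\ldots,x_k,y_1,\ldots,y_k)\coloneqq\max_{i<j}\bigl(\delta\dotminus |f(x_i,y_j)-f(x_j,y_i)|\bigr),
\]
$(k,\delta)$-stability of $f_s$ says precisely that $\inf^{M_s}_{\xbar,\ybar}\theta=0$ is \emph{not} the whole story — rather, $\theta(\abar,\bbar)>0$ for all tuples, i.e.\ $\theta^{M_s}(\xbar,\ybar)$ is strictly positive everywhere. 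The subtlety is that ``strictly positive everywhere'' in a \emph{finite} structure is equivalent to $\inf^{M_s}_{\xbar,\ybar}\theta(\xbar,\ybar)>0$, but the infimum could be tiny and depend on $s$, so this precise value need not survive the ultralimit.

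The key move, then, is to not insist on $\delta$-stability in $M$ but to settle for $\delta'$-stability for every $\delta'>\delta$, which is exactly what the statement asks. Fix $\delta'>\delta$ and set $\delta''=\tfrac{1}{2}(\delta+\delta')$, so $\delta<\delta''<\delta'$. Since $f_s$ is $(k,\delta)$-stable and $\delta''>\delta$, $f_s$ is also $(k,\delta'')$-stable; but more usefully, I claim $\inf^{M_s}_{\xbar,\ybar}\theta''(\xbar,\ybar)\geq\delta''-\delta>0$ where $\theta''$ is $\theta$ with $\delta$ replaced by $\delta''$. Indeed, for any tuples, some pair $i<j$ has $|f_s(a_i,b_j)-f_s(a_j,b_i)|<\delta$ (by $(k,\delta)$-stability), hence $\delta''\dotminus|f_s(a_i,b_j)-f_s(a_j,b_i)|>\delta''-\delta$, so the max is $>\delta''-\delta$. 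By {\L}o\'{s}'s Theorem, $\inf^{M}_{\xbar,\ybar}\theta''(\xbar,\ybar)\geq\delta''-\delta>0$ in the ultraproduct $M$, which unpacks to: there are no $a_1,\ldots,a_k\in M^x$, $b_1,\ldots,b_k\in M^y$ with $|f(a_i,b_j)-f(a_j,b_i)|\geq\delta''$ for all $i<j$. Thus $f(x,y)$ is $(k,\delta'')$-stable in $T=\Th(M)$. Since $M$ is $\omega_1$-saturated (hence $\omega$-saturated), and since $(k,\delta'')$-stability of a bounded formula in a saturated model implies $\delta''$-stability (as remarked after Definition \ref{def:stable}), $f(x,y)$ is $\delta''$-stable in $T$, and a fortiori $\delta'$-stable since $\delta'>\delta''$.

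I expect the main (minor) obstacle is being careful about the distinction between $(k,\delta)$-stability and full $\delta$-stability, and making sure the passage to the ultraproduct genuinely gains an $\omega$-saturated model so that $(k,\delta'')$-stability upgrades to $\delta''$-stability. A cleaner alternative that avoids invoking that upgrade: apply Lemma \ref{lem:stableopen} directly. By compactness (as in the proof of Lemma \ref{lem:stableopen}), from $(k,\delta'')$-stability of $f$ in every model of $T$ one gets that $f$ is $\delta'$-stable in $T$ for the chosen $\delta'>\delta''$; in fact the argument above producing a uniform lower bound $\delta''-\delta$ on $\inf\theta''$ \emph{is} exactly the compactness input, so $f(x,y)$ is $\delta''$-stable in $T$, hence $\delta'$-stable for all $\delta'>\delta$. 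Either route works; I would present the ``$\inf\theta''>0$ via {\L}o\'{s}'' computation as the core, and cite Lemma \ref{lem:stableopen} or the remark after Definition \ref{def:stable} to finish.
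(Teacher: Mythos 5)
Your proof is correct and follows the same route as the paper: express $(k,\delta)$-stability via the connective $\dotminus$, derive a \emph{uniform} positive lower bound on the infimum of the resulting formula over each $M_s$ (this uniform bound $>0$ is the key point, as you correctly identify; merely knowing ``strictly positive everywhere'' per $s$ does not survive the ultralimit), transfer that bound by {\L}o\'{s}'s Theorem, and then invoke saturation of the ultraproduct (Fact \ref{fact:ultrasat}) to upgrade $(k,\cdot)$-stability of $f^M$ to stability of $f$ in $T$. The one superfluous step is the intermediate $\delta''=\tfrac12(\delta+\delta')$: you can work directly with $\theta(\xbar,\ybar)\coloneqq\max_{i<j}(\delta'\dotminus|f(x_i,y_j)-f(x_j,y_i)|)$, since $(k,\delta)$-stability of $f_s$ already yields $\theta(\abar,\bbar)>\delta'-\delta>0$ for all tuples, whence $\inf^{M_s}\theta\geq\delta'-\delta$ and, by {\L}o\'{s}, $\inf^M\theta\geq\delta'-\delta>0$ — exactly what the paper does. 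Also a small imprecision: the direction ``$(k,\delta)$-stable $\Rightarrow$ $\delta$-stable'' for a single structure needs no saturation at all (an infinite bad configuration restricts to one of length $k$); saturation is what lets you pass from $\delta'$-stability of $f^M$ to $\delta'$-stability of $f$ in all models of $T$, which is the point of the sentence before the lemma and of citing Fact \ref{fact:ultrasat}.
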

\begin{proof}
Set $\xbar=(x_1,\ldots,x_k)$ and $\ybar=(y_1,\ldots,y_k)$.  Fix $\delta'>\delta$ and define the formula
\[
\textstyle\theta(\xbar,\ybar)\coloneqq \max_{i<j}(\delta'\dotminus |f(x_i,y_j)-f(x_j,y_i)|).
\]
Then for any $s\geq 0$ and $\abar\in V_s^k$, $\bbar\in W_s^k$, we have $\theta(\abar,\bbar)>\delta'-\delta>0$. Therefore $\inf^M_{\xbar,\ybar}\theta(\xbar,\ybar)\geq \delta'-\delta>0$. So $f(x,y)$ is $\delta'$-stable in $T$ (via Fact \ref{fact:ultrasat}).
\end{proof}

We now view $\mu$ as determining local Keisler measures on $S_f(M)$ and on $S_{f^*}(M)$ (specifically, we will work with the pushforward of $\mu$ to these local type spaces, while still using the symbol $\mu$). In the next lemma, we transfer the properties of Lemma \ref{lem:upstairs-hom} to obtain suitable statements in the finite setting. 

\begin{lemma}\label{lem:finite-2}
Let $\psi(x,\bbar)$ be an explicit $f$-formula over $M$, and let $\theta(y,\abar)$ and $\zeta(y,\cbar)$ be explicit $f^*$-formulas over $M$. Fix $\delta',\gamma\in (0,1)$ and $\eta>\lambda>0$. Set $\rho=\eta-\lambda$. For $s\in\N$, define
\[
V_{s,*}=\{a\in V_s:\psi(a,\bbar^s)\geq\lambda\}\mand W_{s,*}=\{b\in W_s:\theta(b,\abar^s)\geq\lambda\}.
\]
\begin{enumerate}[$(a)$]
\item Suppose that for all $b\in M^y$,
\[
\mu\!\left(\psi(x,\bbar)\geq\eta\wedge f(x,b)\approx_{\delta'}\zeta(b,\cbar)\right)~>~(1-\gamma)\mu(\psi(x,\bbar)>0).
\]
Then for $\cU$-many $s\in\N$ we have that for all $b\in W_s$,
\[
|\{a\in V_{s,*}:f(a,b)\approx_{\delta'+\rho}\zeta(b,\cbar^s)\}|>(1-2\gamma)|V_{s,*}|.
\]
\item Suppose that there is some $r\in [0,1]$ such that
\[
\mu(\theta(y,\abar)\geq\eta\wedge\zeta(y,\cbar)\approx_{\delta'}r)~>~(1-\gamma)\mu(\theta(y,\abar)>0).
\]
Then for $\cU$-many $s\in\N$, we have
\[
|\{b\in W_{s,*}:\zeta(b,\cbar^s)\approx_{\delta'+\rho}r\}|>(1-\gamma)|W_{s,*}|.
\]
\end{enumerate}
\end{lemma}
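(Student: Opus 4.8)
The plan is to argue by contradiction in both parts, using the blurred measure $\mu^*$ as a bridge: an inequality holding for the normalized counting measures $\mu^s$ on a $\cU$-large set of $s$ passes, through $\lim_\cU$, to an inequality for $\mu^*$, and Lemma \ref{lem:blur} then converts it into one for $\mu$, where the hypothesis lives. First I would record two preliminary points. One: the hypothesis in either part forces $\mu(\psi(x,\bbar)>0)>0$ (resp.\ $\mu(\theta(y,\abar)>0)>0$), since otherwise its right-hand side is $0$ while its left-hand side is the measure of a subset of $[\psi(x,\bbar)>0]$, hence $0$ as well, contradicting strictness. Two: one may assume $\psi,\theta,\zeta\in\cF$ (automatic in the application, where the intervals defining them are taken rational), hence also $g_0(x,y,\xbar):=|f(x,y)-\zeta(y,\xbar)|\in\cF$. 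Since $\mu(\psi(x,\bbar)>0)>0$, for $\cU$-many $s$ the set $V_{s,*}$ is nonempty (as will follow from the lower-bound estimate used below), so for part $(a)$ it suffices to show that for $\cU$-many $s$ and all $b\in W_s$,
\[
\mu^s\!\left([\psi^{M_s}(x,\bbar^s)\ge\lambda]\cap[\,|f(x,b)-\zeta^{M_s}(b,\cbar^s)|\le\delta'+\rho\,]\right)>(1-2\gamma)\,\mu^s\!\left([\psi^{M_s}(x,\bbar^s)\ge\lambda]\right);
\]
similarly for part $(b)$ with $(1-\gamma)$, $\theta$, and $|\zeta^{M_s}(y,\cbar^s)-r|$.

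For part $(a)$: if this fails on a $\cU$-large set of $s$, pick a witness $b_s\in W_s$ for each such $s$ (arbitrary $b_s$ otherwise), set $b:=(b_s)_\cU\in M^y$, and write $g$ for $g_0(x,b,\cbar)$, so that the restriction of $g$ to $M_s$ is $x\mapsto|f(x,b_s)-\zeta^{M_s}(b_s,\cbar^s)|$. Fix rationals $\lambda_0\in(0,\lambda)$, $\lambda_1\in[\lambda,\eta)$, $q\in(\delta',\delta'+\rho]$. For every $s$, the left-hand set of the display contains the \emph{open} set $[\psi^{M_s}(x,\bbar^s)>\lambda_1]\cap[g^{M_s}(x)<q]$, and the latter is $[\chi^{M_s}>0]$ for a single $\chi\in\cF$ assembled from $\psi$, $g_0$ via $\min$ and connectives $\alpha_D$ with $D\in\Def_\Q(\R,<)$; hence by the definition of $\mu^*$ and Lemma \ref{lem:blur}$(a)$,
\[
\lim_\cU\mu^s([\psi^{M_s}(x,\bbar^s)\ge\lambda]\cap[g^{M_s}(x)\le\delta'+\rho])\ge\mu^*(\chi>0)\ge\mu(\chi>0)=\mu([\psi(x,\bbar)>\lambda_1]\cap[g<q]),
\]
and since $\lambda_1<\eta$ and $q>\delta'$ the right side is at least $\mu([\psi(x,\bbar)\ge\eta]\cap[g\le\delta'])>(1-\gamma)\mu(\psi(x,\bbar)>0)$ by the hypothesis applied to $b$. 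For the other side of the display, $[\psi^{M_s}(x,\bbar^s)\ge\lambda]\subseteq[\psi^{M_s}(x,\bbar^s)\ge\lambda_0]$ for all $s$, so Lemma \ref{lem:blur}$(b)$ applied to the closed set $[\lambda_0,\infty)$, together with $\lambda_0>0$, gives $\lim_\cU\mu^s([\psi^{M_s}(x,\bbar^s)\ge\lambda])\le\mu^*(\psi(x,\bbar)\ge\lambda_0)\le\mu(\psi(x,\bbar)\ge\lambda_0)\le\mu(\psi(x,\bbar)>0)$. Passing to $\cU$-limits in the failed inequality thus yields $(1-\gamma)\mu(\psi(x,\bbar)>0)<(1-2\gamma)\mu(\psi(x,\bbar)>0)$, which is impossible since $\mu(\psi(x,\bbar)>0)>0$.

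Part $(b)$ is the same computation with $\theta$ for $\psi$ and the number $r$ for $g$. The only new point is that $r$ may be irrational: I would pick a rational $r_1$ with $|r-r_1|<\rho/2$ and a rational $q$ with $\delta'+|r-r_1|<q<\delta'+\rho-|r-r_1|$, so that the open interval $(r_1-q,r_1+q)\in\Def_\Q(\R,<)$ is squeezed between $\{t:|t-r|\le\delta'\}$ and $\{t:|t-r|\le\delta'+\rho\}$; then, with a rational $\lambda_1\in[\lambda,\eta)$, the same open-set/Lemma \ref{lem:blur}$(a)$/hypothesis chain bounds $\lim_\cU$ of the left side below by $(1-\gamma)\mu(\theta(y,\abar)>0)$, while $\lim_\cU\mu^s([\theta^{M_s}(y,\abar^s)\ge\lambda])\le\mu(\theta(y,\abar)>0)$ as before, giving the contradiction $(1-\gamma)\mu(\theta(y,\abar)>0)<(1-\gamma)\mu(\theta(y,\abar)>0)$.

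I expect the main point requiring care to be the bookkeeping around the ``buffer'' $\rho=\eta-\lambda$, which must simultaneously (i) provide room to slip from the closed sets in the hypothesis into the \emph{open} sets needed for Lemma \ref{lem:blur}$(a)$ (and dually to enlarge into closed sets for Lemma \ref{lem:blur}$(b)$), (ii) provide room to replace the real thresholds $\eta,\lambda,\delta'$ — and, in part $(b)$, the value $r$ — by nearby rationals so that all sets used lie in $\Def_\Q(\R,<)$ and all formulas used lie in $\cF$, and (iii) absorb the genuine gap between $\mu$ and $\mu^*$, which is why the conclusion of $(a)$ is stated with the slacker error $(1-2\gamma)$ (the argument above in fact already gives $(1-\gamma)$). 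None of the individual inclusions is deep; the discipline to keep straight is to enlarge sets when bounding a measure from above and to shrink to open sets when bounding from below.
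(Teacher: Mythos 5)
Your proof is correct, and for part $(a)$ it takes a genuinely different route from the paper. The paper's proof of $(a)$ relies on the auxiliary predicates $P_{\xi,\rho}$ and $Q_{\psi,\lambda}$ built into the expanded language $\cL^+$: it encodes the target measure inequality into a single formula $\pi(y,\ybar,\zbar)$ with $\sup^M_y\pi=0$, and applies {\L}o\'{s}'s Theorem to transfer the bound uniformly over all $b\in W_s$. The cost is that {\L}o\'{s} only yields $\sup^{M_s}_y\pi\le\gamma\tau$ for a previously fixed threshold $\tau>0$, and absorbing the $\gamma\tau$ term is precisely where the conclusion degrades from $(1-\gamma)$ to $(1-2\gamma)$. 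Your proof instead handles the universal quantifier over $b\in W_s$ by contradiction: a $\cU$-large family of bad witnesses $b_s\in W_s$ assembles into a single parameter $b\in M^y$ to which the hypothesis applies directly, and passing through Lemma \ref{lem:blur} on both sides of the failed inequality gives the contradiction with no slack. As you note, this actually yields the stronger conclusion with $(1-\gamma)$, matching part $(b)$. It also bypasses the $P$ and $Q$ predicates entirely (Lemma \ref{lem:blur} itself only needs the $\boldsymbol{1}^+_\psi$ predicates), which is a clean observation about which part of the $\cL^+$-machinery is actually load-bearing. For part $(b)$, your argument and the paper's are substantively the same chain through $\mu^*$ and Lemma \ref{lem:blur}, just phrased contrapositively; you are somewhat more careful than the paper about the point that $r$ may be irrational (so the formula $|\zeta-r|\dotminus\delta'$ need not lie in $\cF$, and one must pass to a nearby rational $r_1$ and a squeezed rational interval), a detail the paper's ``WLOG $\delta',\eta,\lambda$ rational'' does not by itself cover.
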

\begin{proof}
Without loss of generality, we may assume $\delta'$, $\eta$, and $\lambda$ are rational.

Part $(a)$. First, note that our assumptions imply $\mu(\psi(x,\bbar)\geq\eta)>0$. So by Lemma \ref{lem:blur}, we have $\mu^*(\psi(x,\bbar)\geq\lambda)>0$. Let $\tau=\frac{1}{2}\mu^*(\psi(x,\bbar)\geq\lambda)$. Then there is some $X_1\in\cU$ such that  $\mu^s(\psi(x,\bbar^s)\geq\lambda)>\tau$ for all $s\in X_1$.

Now define the $\cL$-formula
\[
\xi(x,y,\ybar,\zbar) \coloneqq \max\{\eta\dotminus \psi(x,\ybar),|f(x,y)-\zeta(y,\zbar)|\dotminus \delta'\},
\]
which is in $\cF$. By assumption, for all $b\in M^y$ we have 
\[
\mu(\xi(x,b,\bbar,\cbar)=0)~>~(1-\gamma)\mu(\psi(x,\bbar)>0).
\]
By Lemma \ref{lem:blur}, for all $b\in M^y$ we have
\[
\mu^*(\xi(x,b,\bbar,\cbar)\leq\rho)~>~(1-\gamma)\mu^*(\psi(x,\bbar)\geq\lambda).
\]
So if we define the $\cL^+$-formula 
\[
\pi(y,\ybar,\zbar)\coloneqq (1-\gamma)Q_{\psi,\lambda}(\ybar)\dotminus P_{\xi,\rho}(y,\ybar,\zbar),
\]
then  $\sup^{M^+}_y \pi(y,\bbar,\cbar)=0$. By {\L}o\'{s}'s Theorem, there is some $X_2\in \cU$ such that if $s\in X_2$ then $\sup^{M^+_s}_y\pi(y,\bbar^s,\cbar^s)\leq\gamma\tau$. 
Therefore, if $s\in X_1\cap X_2$ then, for all $b\in W_s$, 
\[
\mu^s(\xi(x,b,\bbar^s,\cbar^s)\leq \rho)~\geq~ (1-\gamma)\mu^s(\psi(x,\bbar^s)\geq\lambda)-\gamma\tau~>~(1-2\gamma)\mu^s(\psi(x,\bbar^s)\geq\lambda), 
\]
and so 
$|\{a\in V_{s,*}:f(a,b)\approx_{\delta'+\rho}\zeta(b,\cbar^s)\}|>(1-2\gamma)|V_{s,*}|$.

Part $(b)$. Define the $\cL$-formula
 \[
 \chi(y,\xbar,\zbar) \coloneqq \max\{\eta\dotminus\theta(y,\xbar),|\zeta(y,\zbar)-r|\dotminus \delta'\}.
 \]
Then $\mu(\chi(y,\abar,\cbar)=0)>(1-\gamma)\mu(\theta(y,\abar)>0)$ by  assumption.  By Lemma \ref{lem:blur}, 
\[
\mu^*(\chi(y,\abar,\cbar)\leq \rho)~\geq~\mu(\chi(y,\abar,\cbar)=0)~>~(1-\gamma)\mu^*(\theta(y,\abar)\geq\lambda).
\]
So for $\cU$-many $s\in\N$, we have
\[
\mu^s(\chi(y,\abar^s,\cbar^s)\leq \rho)~>~(1-\gamma)\mu^s(\theta(y,\abar^s)\geq\lambda),
\]
i.e., $|\{b\in W_{s,*}:\zeta(b,\cbar^s)\approx_{\delta'+\rho}r\}|>(1-\gamma)|W_{s,*}|$. 
\end{proof}

 Choose some $\delta'\in (\delta,\delta+\frac{1}{6}\epsilon)$. By Lemma \ref{lem:finite-1}, $f(x,y)$ is $(k,\delta')$-stable in $T$. We apply Lemma \ref{lem:upstairs-hom}  (with our fixed $\epsilon>0$) to obtain some $m,n\geq 1$, and then choose $\gamma\coloneqq\frac{1}{2}\sigma(mn)$ in the conclusion. For the reader's convenience, we  reiterate the full statement. Set $\zeta_1(y,\xbar)\coloneqq \zeta^{\delta'}_f(y,\xbar)$ and $\zeta_2(x,\ybar)\coloneqq \zeta^{\delta'}_{f^*}(x,\ybar)$.
 Then there are:
\begin{enumerate}[\hspace{5pt}$\ast$]
\item explicit $f$-formulas $\psi_1(x,\bbar_1),\ldots,\psi_m(x,\bbar_m)$ over $M$,
\item explicit $f^*$-formulas $\theta_1(y,\abar_1),\ldots,\theta_n(y,\abar_n)$ over $M$,
\item finite tuples $\cbar_1,\ldots,\cbar_m,\dbar_1,\ldots,\dbar_n$ from $M$,  and
\item some $\eta>0$,
\end{enumerate}
satisfying the following properties.
\begin{enumerate}[$(1)$]
\item $[\psi_1(x,\bbar_1)>0],\ldots,[\psi_m(x,\bbar_m)>0]$ are pairwise disjoint.
\item $[\theta_1(y,\abar_1)>0],\ldots,[\theta_n(y,\abar_n)>0]$ are pairwise disjoint.
\item $\mu(\bigvee_{i=1}^m\psi_i(x,\bbar_i)\geq\eta)~>~1-\epsilon\mu(\psi_1(x,\bbar_1)\geq\eta)$.
\item $\mu(\bigvee_{j=1}^n \theta_j(y,\abar_j)\geq\eta)~>~1-\epsilon\mu(\theta_1(y,\abar_1)\geq\eta)$.
\item If $1\leq i\leq m$ then, for all $b\in M^y$,
\[
\mu\!\left(\psi_i(x,\bbar_i)\geq\eta\wedge f(x,b)\approx_{3\delta'}\zeta_1(b,\cbar_i)\right)~>~(1-\gamma)\mu(\psi_i(x,\bbar_i)>0).
\]
\item If  $1\leq j\leq n$ then, for all $a\in M^x$, 
\[
\mu\!\left(\theta_j(y,\cbar_j)\geq\eta\wedge f(a,y)\approx_{3\delta'}\zeta_2(a,\dbar_j)\right)~>~(1-\gamma)\mu(\theta_j(y,\cbar_j)>0).
\]
\item For all $(i,j)\in [m]\times [n]$, there is some $r_{i,j}\in [0,1]$ such that 
\[
\mu\!\left(\theta_j(y,\cbar_j)\geq\eta\wedge\zeta_1(y,\cbar_i)\approx_{2\delta'} r_{i,j}\right)~>~(1-\gamma)\mu(\theta_j(y)>0).
\]
\item For all $(i,j)\in [m]\times [n]$, there is some $s_{i,j}\in [0,1]$ such that 
\[
\mu\!\left(\psi_i(x,\bbar_i)\geq\eta\wedge\zeta_2(x,\dbar_j)\approx_{2\delta'} s_{i,j}\right)~>~(1-\gamma)\mu(\psi_i(x,\bbar_i)>0).
\]
\end{enumerate}
Note that properties $(3)$-$(8)$  remain true if $\eta$ is replaced by something smaller. So we may assume $\eta\leq 6\delta-6\delta'+\epsilon$ (the right  side is positive by choice of $\delta'$).  Let $\abar=(\abar_1,\ldots,\abar_n)$ and $\bbar=(\bbar_1,\ldots,\bbar_m)$, and define
\[
\psi(x,\bbar)=\min_{1\leq i\leq m}\psi_i(x,\bbar_i)\mand \theta(y,\abar)=\min_{1\leq j\leq n}\theta_j(y,\abar_j).
\]
Given $s\geq 0$, $1\leq i\leq m$, and $1\leq j\leq n$, set $V_{s,i}=\{a\in V_s:\psi_i(a,\bbar^s_i)\geq\frac{1}{2}\eta\}$ and $W_{s,j}=\{b\in W_s:\theta_j(b,\abar^s_j)\geq\frac{1}{2}\eta\}$.

\begin{Claim}\label{cl:finite-3}
The following properties hold for $\cU$-many $s\in\N$. 
\begin{enumerate}[$(a)$]
\item $V_{s,1},\ldots,V_{s,m}$ are pairwise disjoint and $W_{s,1},\ldots,W_{s,n}$ are pairwise disjoint.
\item $|\bigcup_{i=1}^m V_{s,i}|>|V_s|-\epsilon|V_{s,1}|$ and $|\bigcup_{j=1}^n W_{s,j}|>|W_s|-\epsilon|W_{s,1}|$.
\item If $1\leq i\leq m$, then for all $b\in W_s$,
\[
|\{a\in V_{s,i}:f(a,b)\approx_{3\delta+\frac{1}{2}\epsilon}\zeta_1(b,\cbar_i^s)\}|>(1-\sigma(mn))|V_{s,i}|.
\]
\item If $1\leq j\leq n$, then for all $a\in V_s$,
\[
|\{b\in W_{s,j}:f(a,b)\approx_{3\delta+\frac{1}{2}\epsilon}\zeta_2(a,\dbar_j^s)\}|>(1-\sigma(mn))|W_{s,j}|.
\]

\item For all $(i,j)\in [m]\times [n]$, $|\{b\in W_{s,j}:\zeta_1(b,\cbar^s_i)\approx_{2\delta+\frac{1}{2}\epsilon}r_{i,j}\}|>(1-\sigma(mn))|W_{s,j}|$.
\item For all $(i,j)\in [m]\times[n]$, $|\{a\in V_{s,i}:\zeta_2(a,\dbar^s_j)\approx_{2\delta+\frac{1}{2}\epsilon}s_{i,j}\}|>(1-\sigma(mn))|V_{s,i}|$.
\end{enumerate}
\end{Claim}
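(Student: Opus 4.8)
The plan is to prove each of (a)--(f) for $\cU$-many $s\in\N$ and then intersect the resulting finitely many members of $\cU$. Items (a) and the counting facts behind (b) are transferred by {\L}o\'{s}'s theorem, while (b)--(f) require Lemma~\ref{lem:blur} to bridge the Keisler measure $\mu$ on $M$ and the normalized counting measures $\mu^s$ on the $M_s$. Throughout we may assume the open intervals defining the explicit $f$-formulas $\psi_i,\theta_j$ have rational endpoints (Remark~\ref{rem:basic-open}) and that $\eta$ is rational, so that all formulas fed into Lemmas~\ref{lem:blur} and~\ref{lem:finite-2} lie in $\cF$; we also perturb each $r_{i,j},s_{i,j}$ to a nearby rational, shrinking $\eta$ correspondingly, which only tightens the (strict) error bounds obtained below.

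For (a): pairwise disjointness of $[\psi_i(x,\bbar_i)>0]$ in $S_f(M)$ says exactly that the $\cL$-formula $\max_{i\ne j}\min(\psi_i(x,\bbar_i),\psi_j(x,\bbar_j))$ is identically $0$ on $S_f(M)$, so its $\sup_x$ equals $0$ in $M$; by {\L}o\'{s}'s theorem this supremum is $<\tfrac12\eta$ for $\cU$-many $s$, forcing $V_{s,i}\cap V_{s,j}=\emptyset$, and dually for the $W_{s,j}$ using property~(2). For (b): put $\Psi(x,\bbar)=\max_i\psi_i(x,\bbar_i)\in\cF$. Lemma~\ref{lem:blur}(a) applied to $(\tfrac12\eta,\infty)\in\Def_\Q(\R,<)$ gives $\mu^*(\Psi\ge\tfrac12\eta)\ge\mu^*(\Psi>\tfrac12\eta)\ge\mu(\Psi>\tfrac12\eta)\ge\mu(\Psi\ge\eta)$, and likewise $\mu^*(\psi_1\ge\tfrac12\eta)\ge\mu(\psi_1\ge\eta)$. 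Combining with property~(3), namely $\mu(\Psi\ge\eta)>1-\epsilon\,\mu(\psi_1\ge\eta)$, we obtain $\mu^*(\Psi\ge\tfrac12\eta)>1-\epsilon\,\mu^*(\psi_1\ge\tfrac12\eta)$. Since $\mu^*(\,\cdot\,)=\lim_\cU\mu^s(\,\cdot\,)$, this strict inequality of ultralimits holds for $\cU$-many $s$, and multiplying through by $|V_s|$ is precisely $|\bigcup_i V_{s,i}|>|V_s|-\epsilon|V_{s,1}|$; the dual statement for $W$ uses property~(4).

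For (c)--(f) I would feed Lemma~\ref{lem:finite-2} the data just listed. For (c): apply Lemma~\ref{lem:finite-2}(a) once per $i\in[m]$ with ``$\psi$''${}=\psi_i(x,\bbar_i)$, ``$\zeta$''${}=\zeta_1(y,\cbar_i)$, ``$\delta'$''${}=3\delta'$, ``$\eta$''${}=\eta$, ``$\lambda$''${}=\tfrac12\eta$ (so ``$\rho$''${}=\tfrac12\eta$), ``$\gamma$''${}=\gamma$; the hypothesis is property~(5) and the lemma's set $V_{s,*}$ is exactly our $V_{s,i}$, so for $\cU$-many $s$ and all $b\in W_s$ we get $|\{a\in V_{s,i}:f(a,b)\approx_{3\delta'+\frac12\eta}\zeta_1(b,\cbar_i^s)\}|>(1-2\gamma)|V_{s,i}|$. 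Now $1-2\gamma=1-\sigma(mn)$ by the choice $\gamma=\tfrac12\sigma(mn)$, and $3\delta'+\tfrac12\eta\le 3\delta+\tfrac12\epsilon$ since $\delta'<\delta+\tfrac16\epsilon$ and $\eta\le 6\delta-6\delta'+\epsilon$, so the displayed set is contained in the set in (c). Part (d) is the same argument with $f,V,W$ replaced by $f^*,W,V$, using $\theta_j(y,\abar_j)$ for ``$\psi$'', $\zeta_2(x,\dbar_j)$ for ``$\zeta$'', and property~(6). For (e): apply Lemma~\ref{lem:finite-2}(b) once per $(i,j)$ with ``$\theta$''${}=\theta_j(y,\abar_j)$, ``$\zeta$''${}=\zeta_1(y,\cbar_i)$, ``$r$''${}=r_{i,j}$, ``$\delta'$''${}=2\delta'$, ``$\eta$''${}=\eta$, ``$\lambda$''${}=\tfrac12\eta$, ``$\gamma$''${}=\gamma$, with hypothesis property~(7); the conclusion is $|\{b\in W_{s,j}:\zeta_1(b,\cbar_i^s)\approx_{2\delta'+\frac12\eta}r_{i,j}\}|>(1-\gamma)|W_{s,j}|$ for $\cU$-many $s$, and since $2\delta'+\tfrac12\eta<2\delta+\tfrac12\epsilon$ (using also $\delta'>\delta$) and $1-\gamma\ge 1-\sigma(mn)$, this gives (e); part (f) uses property~(8).

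The main obstacle is exactly this threshold-and-error accounting forced by the gap between $\mu$ and $\mu^*$: Lemma~\ref{lem:blur} only compares $\mu$ on a \emph{closed} condition $\{\psi\ge\eta\}$ with $\mu^*$ on the slightly larger \emph{open} condition $\{\psi>\tfrac12\eta\}$, which is why the finite sets $V_{s,i},W_{s,j}$ must be cut at the lower level $\tfrac12\eta$ and why Lemma~\ref{lem:finite-2} inflates the homogeneity error by $\rho=\tfrac12\eta$. Making the inflated errors $3\delta'+\tfrac12\eta$ and $2\delta'+\tfrac12\eta$ land below $3\delta+\tfrac12\epsilon$ and $2\delta+\tfrac12\epsilon$---so that, via the triangle-inequality assembly of Remark~\ref{rem:upstairs-hom}, the resulting pairs end up $(5\delta+\epsilon;\sigma(mn))$-homogeneous---is precisely what the earlier tunings $\delta'\in(\delta,\delta+\tfrac16\epsilon)$, $\gamma=\tfrac12\sigma(mn)$ and $\eta\le 6\delta-6\delta'+\epsilon$ are for; the bulk of the work is checking these inequalities and verifying that the sets $V_{s,*}$, $W_{s,*}$ produced by Lemma~\ref{lem:finite-2} coincide with $V_{s,i}$, $W_{s,j}$.
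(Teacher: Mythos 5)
Your proposal is correct and follows essentially the same route as the paper's (terse) proof: part~(a) by {\L}o\'{s}, part~(b) by Lemma~\ref{lem:blur} applied to properties~(3)--(4), and parts~(c)--(f) by Lemma~\ref{lem:finite-2} with $\lambda=\tfrac12\eta$ applied to properties~(5)--(8). You supply the threshold bookkeeping ($3\delta'+\tfrac12\eta\le 3\delta+\tfrac12\epsilon$, $2\delta'+\tfrac12\eta<2\delta+\tfrac12\epsilon$, $1-2\gamma=1-\sigma(mn)$) that the paper leaves implicit, and it all checks out.
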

\begin{proof}
Since the metric on $M$ is discrete, part $(a)$ follows easily  from {\L}o\'{s}'s Theorem and properties $(1)$ and $(2)$ above. For part $(b)$, note that by $(3)$ and  Lemma \ref{lem:blur}, we have $\mu^*(\bigvee_{i=1}^m\psi_i(x,\bbar_i)\geq\frac{1}{2}\eta)>1-\epsilon\mu^*(\psi_1(x,\bbar_i)\geq\frac{1}{2}\eta)$, and so $|\bigcup_{i=1}^m V_{s,i}|>|V_s|-\epsilon|V_{s,1}|$ holds for $\cU$-many $s\in\N$. We similarly get $|\bigcup_{j=1}^n W_{s,j}|>|W_s|-\epsilon|W_{s,1}|$ for $\cU$-many $s\in\N$ from $(4)$ and Lemma \ref{lem:blur}. 
For parts $(c)$ through $(f)$, apply Lemma \ref{lem:finite-2} to  $(5)$ through $(8)$, while choosing $\lambda=\frac{1}{2}\eta$. 
\end{proof}

Since $\cU$ is nonprincipal, we may choose $s\geq m,n$ satisfying the properties in the previous claim. Set $V_{s,0}=V_s\backslash \bigcup_{i=1}^m V_{s,i}$ and $W_{s,0}=W_s\backslash\bigcup_{j=1}^n W_{s,j}$. Then $|V_{s,0}|< \epsilon |V_{s,1}|$ and $|W_{s,0}|<\epsilon|W_{s,1}|$ by part $(b)$ of Claim \ref{cl:finite-3}. By parts $(c)$, $(d)$, $(e)$, and $(f)$ of Claim \ref{cl:finite-3}, and the triangle inequality, it follows that for all $(i,j)\in [m]\times[n]$, $(V_{s,i},W_{s,j})$ is $(5\delta+\epsilon;\sigma(mn))$-homogeneous. Altogether, this contradicts the choice of $f_s\colon V_s\times W_s\to [0,1]$, and we have finished the proof of Theorem \ref{thm:main-finite-thm}.

\begin{remark}\label{rem:complexity-proof}
In order to obtain the extra definability conditions described in Remark \ref{rem:complexity}, one only needs to further assume that $f_s$ admits no such partition in which the complexity of the ingredients is bounded by $s$. Then at the end of the proof, choose $s$ to be larger than the complexity of the objects constructed. This requires one to also assume that various parameters are rational, in particular, $\eta$, $r_{i,j}$, $s_{i,j}$, and the endpoints of the intervals involved in each $\psi_i$ and $\theta_j$.  For $\eta$, this is easy, and for $\psi_i$ and $\theta_j$ use Remark \ref{rem:basic-open}. Finally, replace  $r_{i,j}$ and $s_{i,j}$ by a sufficiently close rational number, and use $\epsilon$ to absorb the difference. 
\end{remark}

\section{Further results}

\subsection{Equipartitions}\label{sec:equip}

A common tension between regularity lemmas proved using finitary methods versus those proved using model theoretic methods is that the model theoretic proofs typically do not provide equipartitions without further work.  In the case of regularity for \emph{arbitrary} graphs or functions, a partition can be turned into an equipartition using a number of standard methods. However, for stable regularity, which involves homogeneity and \emph{and no irregular pairs}, more care is required to build an equipartition. In this section, we will demonstrate how the decay function in Theorem \ref{thm:main-finite-thm} makes this relatively easy.\footnote{This was also observed by Terry in the setting of stable graph regularity.} On the other hand, we note that the process of turning a partition into an equipartition usually results in a loss of  ``definability" of the pieces (as in Remark \ref{rem:complexity}).

\begin{theorem}\label{thm:equip} 
Let $V$ and $W$ be finite sets, and suppose $f\colon V\times W\to [0,1]$ is a $(k,\delta)$-stable function. Then for any $\epsilon>0$ and any function $\sigma\colon \N\to (0,1)$, there are partitions $V=V_0\cup V_1\cup\ldots\cup V_m$ and $W=W_0\cup W_1\cup\ldots\cup W_n$, with $m,n\leq O_{k,\delta,\epsilon,\sigma}(1)$, satisfying the following properties.
\begin{enumerate}[$(i)$]
\item For all $(i,j)\in [m]\times [n]$, the pair $(V_i,W_j)$ is $(5\delta+\epsilon;\sigma(mn))$-homogeneous.
\item $|V_i|=|V_j|$ for all $1\leq i,j\leq m$; and $|W_i|=|W_j|$ for all $1\leq i,j\leq n$.
\item $|V_0|\leq\epsilon|V|$ and $|W_0|\leq \epsilon|W|$.
\end{enumerate}
\end{theorem}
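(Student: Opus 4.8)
The plan is to derive Theorem~\ref{thm:equip} from Theorem~\ref{thm:main-finite-thm} by a block-chopping argument, the point being that the freely chosen decay function can absorb the (genuinely unavoidable) degradation of homogeneity incurred when passing to subsets. First I would dispose of a degenerate case: there will be a constant $C_1=C_1(k,\delta,\epsilon,\sigma)$ (namely $4/\epsilon$ times the bound on $m,n$ produced below) so that if $\min(|V|,|W|)\leq C_1$ we can argue by hand. Suppose $|V|\leq C_1$. Split $V$ into singletons $\{v_1\},\dots,\{v_m\}$ with $V_0=\emptyset$, fix a partition of $[0,1]$ into $\lceil 1/\delta\rceil$ intervals of length at most $\delta$ with centres $c_J$, and let $W$ be partitioned into the boundedly many nonempty common fibres of the maps $b\mapsto$ (the interval containing $f(v_l,b)$), for $l\leq m$. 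On each fibre $W_*$ every $f(v_l,\cdot)$ is within $\delta$ of the corresponding centre, so each pair $(\{v_l\},W_*)$ is $(\delta;\gamma)$-homogeneous for every $\gamma>0$ (take $r=s$ to be that centre, $V'=\{v_l\}$, $W'=W_*$). Finally chop each fibre into blocks of equal size $\max\{1,\lfloor\epsilon|W|/(\#\text{fibres})\rfloor\}$, moving remainders into $W_0$; since $f(v_l,\cdot)$ stays nearly constant on each block, all pairs remain homogeneous, $|W_0|\leq\epsilon|W|$, and the number of blocks is $O_{k,\delta,\epsilon,\sigma}(1)$. (The case $|W|\leq C_1$ is symmetric.)

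For the main case $\min(|V|,|W|)>C_1$, the first step is to fix, in advance, the decay function
\[
\sigma'(N)\;=\;\frac{\epsilon}{4N}\cdot\min\bigl\{\sigma(\ell):1\leq\ell\leq\lceil 16N/\epsilon^2\rceil\bigr\}
\]
(one may assume $\epsilon<\tfrac12$, as otherwise $5\delta+\epsilon\geq\tfrac12$ and $m=n=1$ works trivially; then $\sigma'$ is $(0,1)$-valued). Apply Theorem~\ref{thm:main-finite-thm} with $\epsilon/2$ in place of $\epsilon$ and with this $\sigma'$ to get $m,n\leq O_{k,\delta,\epsilon,\sigma}(1)$, partitions $V=V_0\cup\dots\cup V_m$ and $W=W_0\cup\dots\cup W_n$ with $|V_0|\leq\tfrac{\epsilon}{2}|V_1|$ and $|W_0|\leq\tfrac{\epsilon}{2}|W_1|$, and all pairs $(V_i,W_j)$ being $(5\delta+\tfrac{\epsilon}{2};\gamma_0)$-homogeneous, where $\gamma_0=\sigma'(mn)$. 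Now put $d=\lfloor\epsilon|V|/(2m)\rfloor$ and $d_W=\lfloor\epsilon|W|/(2n)\rfloor$ (both $\geq 1$ by the case hypothesis), split each $V_i$ into $\lfloor|V_i|/d\rfloor$ blocks of size exactly $d$ plus a remainder of size $<d$, do likewise for $W$, and collect all remainders together with $V_0$ (resp.\ $W_0$) into new exceptional sets. Then $|V_0^{\mathrm{new}}|<\tfrac{\epsilon}{2}|V|+md\leq\epsilon|V|$, and likewise for $W$; the surviving blocks have equal sizes by construction; and their numbers satisfy $m'\leq|V|/d<4m/\epsilon$ and $n'<4n/\epsilon$ (and $m',n'\geq 1$ since the largest $V_i$ has more than $d$ elements), hence $1\leq m'n'<16mn/\epsilon^2$, all $O_{k,\delta,\epsilon,\sigma}(1)$.

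The remaining task, and the only nontrivial one, is the homogeneity of the new pairs. The tool is an elementary \emph{sub-block inheritance} observation: if $(V_*,W_*)$ is $(\eta;\gamma)$-homogeneous and $A\seq V_*$, $B\seq W_*$, then $(A,B)$ is $\bigl(\eta;\,\gamma\max(|V_*|/|A|,|W_*|/|B|)\bigr)$-homogeneous --- one intersects the witnessing $V',W'$ with $A,B$, keeps the same constants $r,s$, and notes that a shortfall of at most $\gamma|V_*|$ within $A$ is a $\gamma|V_*|/|A|$ fraction of $A$. Applying this to $A=V_a\seq V_i$ of size $d$ and $B=W_b\seq W_j$ of size $d_W$, the blow-up factor is at most $\max(|V|/d,|W|/d_W)<4\max(m,n)/\epsilon\leq 4mn/\epsilon$, so (using monotonicity of homogeneity in $\gamma$) each new pair is $(5\delta+\tfrac{\epsilon}{2};\tfrac{4mn}{\epsilon}\gamma_0)$-homogeneous. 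But $\tfrac{4mn}{\epsilon}\gamma_0=\tfrac{4mn}{\epsilon}\sigma'(mn)=\min\{\sigma(\ell):1\leq\ell\leq\lceil 16mn/\epsilon^2\rceil\}\leq\sigma(m'n')$ by the choice of $\sigma'$ and the bound $m'n'<16mn/\epsilon^2$; since homogeneity is monotone in both parameters, each new pair is $(5\delta+\epsilon;\sigma(m'n'))$-homogeneous, and relabelling the blocks completes the argument.

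The hard part here is not any single computation but the apparent circularity in the quantifiers: the degradation one can tolerate depends on the final partition size $m'n'$, which depends on $m,n$, which depend on the decay function $\sigma'$ fed into Theorem~\ref{thm:main-finite-thm}. This is broken because the map $\sigma\mapsto\sigma'$ above is fixed before $m,n$ are known, and $\sigma'(N)$ is built to beat $\sigma(\ell)/(\text{blow-up})$ for \emph{every} $\ell$ in the finite window in which $m'n'$ must fall --- here $1\leq m'n'<16mn/\epsilon^2$ with blow-up at most $4mn/\epsilon$, and $N=mn$. The only other care needed is the bookkeeping of the $\epsilon$-budget: splitting it between the exceptional set already produced by Theorem~\ref{thm:main-finite-thm} and the remainders lost to block-chopping is exactly why the theorem is invoked with $\epsilon/2$ and the block size is taken comparable to $\epsilon|V|/(2m)$.
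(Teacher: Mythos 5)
Your proof is correct and follows essentially the same route as the paper's: apply Theorem~\ref{thm:main-finite-thm} with a pre-modified decay function, chop each homogeneous piece into equal-sized blocks (the decay absorbing the resulting $\max(|V_i|/|A|,|W_j|/|B|)$-factor degradation in homogeneity, exactly your ``sub-block inheritance''), and dump the remainders together with the old exceptional sets into $V_0$ and $W_0$. The only notable difference is that the paper takes blocks of size $\lceil\frac{\epsilon}{2m'}|V|\rceil$ (ceiling rather than floor), which is automatically $\geq 1$ and so makes your separate degenerate case unnecessary, and it assumes $\sigma$ is decreasing in place of your min-over-a-window device.
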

\begin{proof}
Without loss of generality, assume $\sigma$ is decreasing. 
Let $\tau\colon \N\to (0,1)$ be defined by $\tau(n)=\frac{\epsilon}{2n}\sigma(4n^2\lceil\epsilon\inv\rceil^2)$, and let $N$ be the  bound $O_{k,\delta,\epsilon/2,\tau}(1)$ from Theorem \ref{thm:main-finite-thm}.  Now let $f\colon V\times W\to [0,1]$ be $(k,\delta)$-stable. 
By Theorem \ref{thm:main-finite-thm}, there are partitions $V=V'_0\cup V'_1\cup\ldots\cup V'_{m'}$ and $W=W'_0\cup W'_1\cup\ldots\cup W'_{n'}$, with $m',n'\leq N$, such that $|V'_0|\leq\epsilon|V'_1|/2$, $|W'_0|\leq\epsilon|W'_1|/2$, and $(V'_i,W'_j)$ is $(5\delta+\epsilon;\tau(m'n'))$-homogeneous for all $(i,j)\in [m']\times[n']$. Set $N^*=2N^2\epsilon\inv=O_{k,\delta,\epsilon,\sigma}(1)$.

For each $1\leq i\leq m'$, partition $V'_i=V'_{i,1}\cup\ldots\cup V'_{i,t_i}\cup X_i$ so that $|V'_{i,p}|=\lceil \frac{\epsilon}{2m'}|V|\rceil$ for all $1\leq p\leq t_i$, and $|X_i|\leq \frac{\epsilon}{2m'}|V|$ (note that we allow $t_i=0$). Similarly, for each $1\leq j\leq n'$, partition $W'_j=W'_{j,1}\cup\ldots\cup W'_{j,u_j}\cup Y_j$ so that $|W'_{i,q}|=\lceil\frac{\epsilon}{2n'}|W|\rceil$ and $|Y_j|\leq \frac{\epsilon}{2n'}|W|$. Let $V_1,\ldots,V_m$ enumerate $\{V'_{i,p}:1\leq i\leq m',~1\leq p\leq t_i\}$; and let $W_1,\ldots,W_n$ enumerate $\{W'_{j,q}:1\leq j\leq n',~1\leq q\leq u_j\}$. Then $(ii)$ holds by construction. For each $1\leq i\leq m'$, we have $t_i\frac{\epsilon}{2m'}|V|\leq |V'_i|\leq |V|$, and so $t_i\leq 2m'\epsilon\inv$. Thus $m\leq 2(m')^2\epsilon\inv\leq N^*$. Similarly, $n\leq 2(n')^2\epsilon\inv \leq N^*$.

To show $(i)$, we fix $V'_{i,p}$ and $W'_{j,q}$, and show that $(V'_{i,p},W'_{j,q})$ is $(5\delta+\epsilon;\sigma(mn))$-homogeneous. By construction, there are  $r\in [0,1]$ and $W'\seq W'_j$ such that $|W'|\geq (1-\tau(m'n'))|W'_j|$ and, for all $b\in W'$,
\begin{equation*}
|\{a\in V'_i:f(a,b)\approx_{5\delta+\epsilon}r\}|\geq (1-\tau(m'n'))|V'_i|.\tag{$\dagger$}
\end{equation*}
Let $W''=W'\cap W'_{j,q}$. Then 
\[
|W''|\geq |W'_{j,q}|-\tau(m'n')|W'_j|\geq|W'_{j,q}|-\tau(m'n')|W|
\textstyle \geq|W'_{j,q}|-\tau(m'n')\frac{2n'}{\epsilon}|W'_{i,q}|.
\]
Recall that $mn\leq 4(m'n')^2\epsilon^{\nv 2}$, and so $\tau(m'n')\frac{2n'}{\epsilon}\leq \sigma(mn)$ by choice of $\tau$. So $|W''|\geq (1-\sigma(mn))|W'_{i,q}|$. Moreover, if $b\in W''$ then by $(\dagger)$,
\[
|\{a\in V'_{i,p}:f(a,b)\approx_{5\delta+\epsilon}r\}|\geq |V'_{i,p}|-\tau(m'n')|V'_i|\geq (1-\sigma(mn))|V'_{i,p}|,
\]
where the final inequality follows by similar calculations. By a symmetric argument, we obtain the desired homogeneity for $(V'_{i,p},W'_{j,q})$.

Finally, set $V_0=V'_0\cup\bigcup_{i=1}^{m'}X_i$ and $W_0=W'_0\cup\bigcup_{j=1}^{n'}Y_j$. Note that we now have partitions $V=V_0\cup V_1\cup\ldots\cup V_m$ and $W=W_0\cup W_1\cup\ldots\cup W_n$ satisfying $(i)$ and $(ii)$. So it remains to prove the bounds in $(iii)$. 
For this, we have
\[
\textstyle |V_0|= |V'_0|+\sum_{i=1}^{m'}|X_i|\leq \frac{\epsilon}{2}|V'_1|+m'{\left(\frac{\epsilon}{2m'}|V|\right)}\leq\epsilon|V|.
\]
By a similar argument, we get $|W_0|\leq\epsilon|W|$. 
\end{proof}

\begin{remark}
As in Theorem \ref{thm:main-finite-thm}, the sets $V_0$ and $W_0$ are exceptional sets of vertices, which are used to ensure strong homogeneity with a decay function (c.f. Theorem \ref{thm:main-finite2}) \emph{and} to achieve perfectly balanced equipartitions. The use of an exceptional set to achieve the latter feature is typical in general regularity as well (see, e.g., \cite[Theorem 1.7]{KomSim}). A standard alternate approach is to evenly distribute the exceptional set among the remaining pieces of the partition, which yields a new partition satisfying $||V_i|-|V_j||\leq 1$ for all $i,j$ (see, e.g., \cite[Theorem 1.8]{KomSim}). In our situation, one could  do this to remove $V_0$ and $W_0$, but it would again result in homogeneity controlled only by $\epsilon$ rather than $\sigma$ (as in Theorem \ref{thm:main-finite2}). 
\end{remark}

\subsection{The case of graphs}

Continuing with  Remarks \ref{rem:stable-graphs} and \ref{rem:MaPi}, we note that our previous results yield  a qualitative version of stable graph regularity \cite{MaSh}  with homogeneity controlled by a decay function. Our formulation is in terms of bipartitioned graph relations, which differs  from \cite{MaSh} (see Section \ref{sec:bipartite} for further discussion).

\begin{corollary}\label{cor:graph-version}
Let $V$ and $W$ be finite sets and suppose $E\seq V\times W$ is  $k$-stable. Then for any $\epsilon>0$ and any function $\sigma\colon\N\to (0,1)$, there are partitions $V=V_0\cup V_1\cup\ldots\cup V_m$ and $W=W_0\cup W_1\cup\ldots \cup W_n$,  with $m,n\leq O_{k,\epsilon,\sigma}(1)$, such that for all  $(i,j)\in [m]\times [n]$, 
\[
\text{$|E\cap (V_i\times W_j)|\leq \sigma(mn)|V_i||W_j|$ or $|E\cap (V_i\times W_j)|\geq (1-\sigma(mn))|V_i||W_j|$.}
\] 
Moreover, one of the follow cases holds.
\begin{enumerate}[$(i)$]
\item $|V_0|\leq\epsilon|V_1|$, $|W_0|\leq \epsilon|W_1|$, and $V_1,\ldots,V_m, W_1,\ldots,W_n$ are  $E$-definable of complexity $O_{k,\epsilon,\sigma}(1)$.
\item$|V_0|\leq\epsilon|V|$, $|W_0|\leq\epsilon|W|$, $|V_i|=|V_j|$ for all $1\leq i,j\leq m$, and $|W_i|=|W_j|$ for all $1\leq i,j\leq n$.
\end{enumerate}
\end{corollary}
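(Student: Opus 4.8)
The plan is to deduce Corollary~\ref{cor:graph-version} from Theorem~\ref{thm:main-finite-thm} (for case~$(i)$) and Theorem~\ref{thm:equip} (for case~$(ii)$) by passing between the relation $E$ and the indicator function $\boldsymbol{1}_E\colon V\times W\to[0,1]$. First I would recall from Remark~\ref{rem:stable-graphs} that $k$-stability of $E$ implies, via a routine Ramsey argument, that $\boldsymbol{1}_E$ is $(k',1)$-stable for some $k'=O_k(1)$; in particular $\boldsymbol{1}_E$ is $(k',\delta)$-stable for any $\delta\in(0,1)$. So fix $\delta=\tfrac14$ (any value in $(0,\tfrac15)$ works), and fix $\epsilon>0$ and $\sigma\colon\N\to(0,1)$. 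Without loss of generality assume $\sigma$ is decreasing and $\sigma(n)\le\tfrac12$ for all $n$ (otherwise replace $\sigma$ by $\min\{\sigma,\tfrac12\}$, which only strengthens the conclusion).

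Next, for case~$(i)$ I would apply Theorem~\ref{thm:main-finite-thm} to $\boldsymbol{1}_E$ with this $\delta$, the given $\epsilon$, and the decay function $\sigma$, obtaining partitions $V=V_0\cup\cdots\cup V_m$, $W=W_0\cup\cdots\cup W_n$ with $m,n\le O_{k,\epsilon,\sigma}(1)$ (the dependence on $k',\delta$ collapses into $k$ since $k'=O_k(1)$ and $\delta$ is an absolute constant), such that each $(V_i,W_j)$ is $(5\delta+\epsilon;\sigma(mn))$-homogeneous, $|V_0|\le\epsilon|V_1|$, $|W_0|\le\epsilon|W_1|$, and moreover (Remark~\ref{rem:complexity}) the sets $V_1,\dots,V_m,W_1,\dots,W_n$ are $\boldsymbol{1}_E$-definable of complexity $O_{k,\epsilon,\sigma}(1)$; since a subset of $V$ (or $W$) defined by conditions ``$\boldsymbol{1}_E(a,b)\in D$'' for closed rational intervals $D$ is equivalently defined by conditions on whether $E(a,b)$ holds, these are $E$-definable of complexity $O_{k,\epsilon,\sigma}(1)$. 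The remaining point is to convert homogeneity of the pair $(V_i,W_j)$ with respect to $\boldsymbol{1}_E$ into the edge-density dichotomy. By Remark~\ref{rem:hompairs}, writing $f'=\boldsymbol{1}_E|_{V_i\times W_j}$ and letting $r$ be the witnessing value, we get $\|f'-r\|_1\le \delta+2\sigma(mn)$. Since $f'$ is $\{0,1\}$-valued, $\|f'-r\|_1 \ge \min\{r,1-r\}$, so $\min\{r,1-r\}\le \delta+2\sigma(mn)$; and then the number of edges $|E\cap(V_i\times W_j)| = \|f'\|_1\,|V_i||W_j|$ satisfies either $\|f'\|_1\le r+\|f'-r\|_1$ or $\|f'\|_1\ge r-\|f'-r\|_1$, which after the case split on whether $r<\tfrac12$ gives $|E\cap(V_i\times W_j)|\le (\delta+2\sigma(mn)+(\delta+2\sigma(mn)))|V_i||W_j|$ in one case and the symmetric bound in the other --- i.e.\ the density is within $2\delta+4\sigma(mn)$ of $0$ or $1$. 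Here the crude bound $2\delta+4\sigma(mn)$ exceeds $\sigma(mn)$, so to land exactly on ``$\le\sigma(mn)$ or $\ge 1-\sigma(mn)$'' I should instead run Theorem~\ref{thm:main-finite-thm} with a \emph{smaller} tolerance: apply it with $\delta$ replaced by a small $\delta_0$ and decay function $\sigma_0(n):=\tfrac13\sigma(n)$, so that $2\delta_0+4\sigma_0(mn) = 2\delta_0+\tfrac43\sigma(mn)\le\sigma(mn)$ once $\delta_0<\tfrac16\sigma(mn)$ --- but $\sigma(mn)$ depends on the output size. The clean fix is: since $E$ is $k$-stable, $\boldsymbol{1}_E$ is $(k',1)$-stable, hence $(k',\delta)$-stable for \emph{every} $\delta>0$; so first get a provisional $m,n=O_{k,\sigma}(1)$ bound from, say, $\delta=\tfrac1{10}$, then re-apply with $\delta_0 := \tfrac16\sigma(N^2)$ where $N$ is that bound (the size of the partition only decreases or is controlled by $N$ in the pseudofinite argument --- alternatively just absorb everything by noting $2\delta_0+4\sigma_0(mn)\le\sigma(mn)$ holds for the final $m,n$ since $\sigma_0=\sigma/3$ and $\delta_0$ small). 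I will present this cleanly by fixing $\delta_0=\tfrac1{12}\sigma(\bar N)$ where $\bar N$ is an \emph{a priori} bound for the partition size coming from $(k',\tfrac1{12})$-stability, which is legitimate because $O_{k,\epsilon,\sigma}(1)$ bounds are already non-effective.

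For case~$(ii)$, the argument is identical except that I apply Theorem~\ref{thm:equip} in place of Theorem~\ref{thm:main-finite-thm}: this yields the same homogeneity dichotomy (converted to edge densities exactly as above), together with $|V_0|\le\epsilon|V|$, $|W_0|\le\epsilon|W|$, $|V_i|=|V_j|$ for $1\le i,j\le m$, and $|W_i|=|W_j|$ for $1\le i,j\le n$, at the expense of the definability in $(i)$ (as noted in Section~\ref{sec:equip}). Finally, to produce a single statement ``one of the following cases holds,'' I simply observe that running the argument once with Theorem~\ref{thm:main-finite-thm} gives $(i)$ and running it once with Theorem~\ref{thm:equip} gives $(ii)$; the corollary as phrased only asks that (at least) one of these dichotomous conclusions be attainable, so exhibiting both suffices.

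The main obstacle is the bookkeeping of the error parameters: the passage from $\boldsymbol{1}_E$-homogeneity to the edge-density dichotomy costs a constant factor, so one must apply the function-regularity theorems with a strictly smaller tolerance, and since the permissible tolerance depends on the (non-effective) output size, the cleanest route is to first extract an a~priori bound on $m,n$ using full stability of $\boldsymbol{1}_E$ (available precisely because $E$ is $k$-stable, so $\boldsymbol{1}_E$ is $(k',\delta)$-stable for all $\delta>0$), then re-run with $\delta_0$ and $\sigma_0$ chosen small enough relative to $\sigma$ evaluated at that bound. Everything else --- the Ramsey step, the $\ell^1$-estimate, the identification of $\boldsymbol{1}_E$-definable sets with $E$-definable sets --- is routine.
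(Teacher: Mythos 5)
Your overall plan --- apply Theorems~\ref{thm:main-finite-thm} and~\ref{thm:equip} to $f=\boldsymbol{1}_E$, use Remark~\ref{rem:stable-graphs} to get $(k',\delta)$-stability of $\boldsymbol{1}_E$ for all $\delta>0$, identify $\boldsymbol{1}_E$-definability with $E$-definability, and run the argument once per case --- is the paper's plan. But the step where you convert homogeneity of $(V_i,W_j)$ into the edge-density dichotomy is where things go wrong, and the fix you propose has a real circularity.

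The problem: your route through the $\ell^1$-estimate of Remark~\ref{rem:hompairs} (which, correctly quoted, gives $\|f'-r\|_1\leq(5\delta+\epsilon)+2\sigma(mn)$, not $\delta+2\sigma(mn)$ as you wrote) produces a density bound of the shape $2(5\delta+\epsilon)+4\sigma(mn)$. The $2(5\delta+\epsilon)$ term cannot be absorbed by scaling $\sigma$, and your attempt to kill it by taking $\delta$ small relative to $\sigma(mn)$ runs into the fact that $m,n$ themselves depend on $\delta$: the bound in Theorem~\ref{thm:main-finite-thm} is $O_{k,\delta,\epsilon,\sigma}(1)$, and shrinking $\delta$ may increase it, which (for decreasing $\sigma$) decreases $\sigma(mn)$, which then demands an even smaller $\delta$. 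Your claim that ``the size of the partition only decreases or is controlled by $N$ in the pseudofinite argument'' is not established anywhere in the paper and does not follow from the proof of Theorem~\ref{thm:main-finite-thm} (the integers $m,n$ in Lemma~\ref{lem:upstairs-hom} come from Corollary~\ref{cor:sumtypes} and depend on $\delta$ through $\delta$-stability). The ``a priori bound, then re-apply'' scheme is therefore unjustified. There are also two arithmetic slips compounding this: $\tfrac14\notin(0,\tfrac15)$, and $2\delta_0+\tfrac{4}{3}\sigma(mn)\leq\sigma(mn)$ is impossible for any $\delta_0\geq 0$, so the proposed $\sigma_0=\sigma/3$ cannot work.

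The insight you are missing, and which the paper uses, is that for a $\{0,1\}$-valued $f$ one does \emph{not} need $\delta$ small relative to $\sigma$: any fixed $\delta$ with $5\delta+\epsilon<\tfrac12$ suffices. Once $5\delta+\epsilon<\tfrac12$, the condition $f(a,b)\approx_{5\delta+\epsilon}r$ determines $f(a,b)$ exactly (it equals $0$ if $r<\tfrac12$, and $1$ if $r>\tfrac12$), so in a $(5\delta+\epsilon;\gamma)$-homogeneous pair the witnessing value $r_{i,j}$ may be replaced by the element of $\{0,1\}$ on its side of $\tfrac12$, and the count $|E\cap(V_i\times W_j)|\leq 2\gamma|V_i||W_j|$ (or $\geq(1-2\gamma)|V_i||W_j|$) follows directly by summing over the good $b$'s and over the bad $b$'s separately. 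Since the only surviving error is the $2\gamma$ factor, applying Theorem~\ref{thm:main-finite-thm} (resp.\ Theorem~\ref{thm:equip}) with $\delta<\tfrac{1}{10}-\epsilon$ and decay function $\tfrac12\sigma$ yields the corollary with no circularity and no dependence of $\delta$ on the output. That is the fix you should make; the rest of your outline (the Ramsey step via Remark~\ref{rem:stable-graphs}, Remark~\ref{rem:complexity} for definability, and the split into cases $(i)$ and $(ii)$) is fine as stated.
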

\begin{proof}
Let $f=\boldsymbol{1}_E$. As explained in Remark \ref{rem:stable-graphs}, $f$ is $(O_k(1),\delta)$-stable for any $\delta>0$. So we can apply Theorem \ref{thm:main-finite-thm} with $\delta<\frac{1}{10}-\epsilon$ (without loss of generality, assume $\epsilon<\frac{1}{10}$). It follows  that in a $(5\delta+\epsilon;\sigma(mn))$-homogeneous pair $(V_i,W_j)$, we may take the uniform value $r_{i,j}$ to be either $0$ or $1$. This yields the main claim with case $(i)$ (via Remark \ref{rem:complexity}). To instead obtain case $(ii)$, replace Theorem \ref{thm:main-finite-thm} with  Theorem \ref{thm:equip} in the previous argument.
\end{proof}

A quantitative proof of the previous result for graphs (stated with case $(ii)$) was recently given by Terry and Wolf \cite[Theorem 4.7]{TeWo}.

\subsection{Function decomposition}\label{sec:analytic}
In this section, we use Theorem \ref{thm:main-finite-thm} to derive a stable analogue of the analytic form of Szemer\'{e}di's regularity lemma for functions. We follow the formalism described by Tao in \cite{TaoSR}. Since the analytic setting involves decompositions of functions into \emph{sums} of various components, we will need to develop terminology for functions that are not necessarily $[0,1]$-valued. 

Given a (nonempty) finite set $X$ and functions $f,g\colon X\to \R$,  define the \emph{normalized} inner product 
\[
\textstyle\langle f,g\rangle=\frac{1}{|X|}\sum_{x\in X}f(x)g(x).
\]
The \emph{normalized} $\ell^2$-norm of $f$ is  $\|f\|_2=\sqrt{\langle f,f\rangle}$. Note that  $\|f\|_1=\langle |f|,\boldsymbol{1}\rangle$, and recall also that $\|f\|_\infty=\max_{x\in X}|f(x)|$. So the inequalities $\|f\|_1\leq \|f\|_2\leq\|f\|_\infty$ hold for any $f$ (the first inequality follows from Cauchy-Schwarz).

\begin{definition}
Let $V$ and $W$ be finite sets, and fix  $f\colon V\times W\to [\nv 1,1]$.  
\begin{enumerate}[$(1)$]
 \item  $f$ is  \textbf{$(m,n)$-structured} it is of the form $\sum_{i,j}r_{i,j}\boldsymbol{1}_{V_i\times W_j}$ for some partitions $V=V_0\cup V_1\cup\ldots\cup V_m$ and $W=W_0\cup W_1\cup\ldots\cup W_n$, with   $r_{i,j}\in [\nv 1,1]$. 
\item  $f$ is \textbf{$\epsilon$-pseudorandom} if $|\langle f,\boldsymbol{1}_{A\times B}\rangle|\leq\epsilon$ for all $A\seq V$ and $B\seq W$.  
 \end{enumerate}
 \end{definition}
 
 The previous definitions are adapted from \cite{TaoSR}, although we remark that the  ``structured" notion there applies to $\R$-valued functions, and thus includes an additional parameter $K$ bounding $|r_{i,j}|$ (which we do not require). The notion of pseudorandomness is a special case of the definition from   \cite{TaoSR}, which involves  a  more general schematic. We have specialized to the  ``product structure" setting described in \cite[Example 2.3]{TaoSR}. We can now state the analytic regularity lemma for functions (quoting \cite[Lemma 1.1]{Green-Tao}, with some clarification to follow).

 \begin{lemma}[Szemer\'{e}di's Regularity Lemma, analytic form]\label{lem:SRLa}
 Let $V$ and $W$ be finite sets, and let $f\colon V\times W\to [0,1]$ be a function. Then for any $\epsilon>0$ and $\sigma\colon \N\to (0,1)$, there are $m,n\leq O_{\epsilon,\sigma}(1)$ and a decomposition $f=f_{\str}+f_{\psd}+f_{\err}$ such that $f_{\str}$ is $(m,n)$-structured, $f_{\psd}$ is $\sigma(mn)$-pseudorandom, and $\|f_{\err}\|_2\leq \epsilon$. Moreover, $f_{\psd}$ is $[\nv 1,1]$-valued, while $f_{\str}$ and $f_{\str}+f_{\err}$ are $[0,1]$-valued. 
 \end{lemma}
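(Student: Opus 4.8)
The plan is to derive this directly from a suitable instance of Szemerédi's regularity lemma in its standard ($\epsilon$-regularity) form, or more precisely from a partition-based regularity statement, and then to extract the three pieces by a routine averaging argument. First I would apply the usual Szemerédi regularity lemma to the bipartite "weighted graph" $f\colon V\times W\to[0,1]$ — or, what amounts to the same, partition $V$ and $W$ into $V_0\cup V_1\cup\cdots\cup V_m$ and $W_0\cup W_1\cup\cdots\cup W_n$ so that (i) the exceptional classes $V_0,W_0$ are negligible in the normalized counting measures, (ii) the nonexceptional classes are of roughly equal size, and (iii) all but a $\sigma(mn)$-fraction of pairs $(V_i,W_j)$ are $\sigma(mn)$-regular for $f$ (the density being $r_{i,j}\coloneqq\langle f,\boldsymbol 1_{V_i\times W_j}\rangle/(\mu(V_i)\nu(W_j))$, where $\mu,\nu$ are the normalized counting measures on $V,W$). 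The bound $m,n\leq O_{\epsilon,\sigma}(1)$ comes from the usual iteration/energy-increment argument; the subtlety that the target accuracy $\sigma(mn)$ depends on the size of the partition being produced is handled in the standard way by running the energy increment against the function $n\mapsto\sigma(n)$ rather than a fixed constant (this is exactly the kind of "decay function" bookkeeping already used in Theorem \ref{thm:main-finite-thm}).

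Given such a partition, I would set
\[
f_{\str}=\sum_{i=1}^m\sum_{j=1}^n r_{i,j}\,\boldsymbol 1_{V_i\times W_j},
\]
which is $(m,n)$-structured and $[0,1]$-valued since each $r_{i,j}\in[0,1]$. Next I would put $f_{\err}$ to be the part of $f-f_{\str}$ supported on the "bad" cells — namely on $(V_0\times W)\cup(V\times W_0)$ together with the union of the irregular pairs $V_i\times W_j$ and those pairs whose density is too close to $0$ or $1$ — and set $f_{\psd}=f-f_{\str}-f_{\err}$, supported on the remaining (good, $\sigma(mn)$-regular, bounded-density) cells. Because the bad cells occupy at most an $O(\sigma(mn))$-fraction of $V\times W$ and $|f-f_{\str}|\leq 1$ pointwise, one gets $\|f_{\err}\|_2^2\leq O(\sigma(mn))$, which is $\leq\epsilon^2$ after adjusting the implicit constants (or after replacing $\sigma$ by a slightly smaller decay function at the outset); on the good cells $f_{\psd}$ is $[\nv1,1]$-valued with cell-averages $0$, so $f_{\str}+f_{\err}=f$ on the bad cells is $[0,1]$-valued and $f_{\str}+f_{\err}=f_{\str}$ elsewhere is also $[0,1]$-valued, giving the last sentence of the statement.

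The one genuine point to check is that $f_{\psd}$ is $\sigma(mn)$-pseudorandom, i.e. $|\langle f_{\psd},\boldsymbol 1_{A\times B}\rangle|\leq\sigma(mn)$ for all $A\subseteq V$, $B\subseteq W$. Here one splits $\langle f_{\psd},\boldsymbol 1_{A\times B}\rangle$ cell by cell over the good pairs $V_i\times W_j$; on each such cell $f_{\psd}=f-r_{i,j}$ has mean zero, and $\sigma(mn)$-regularity of the pair says precisely that for $A'=A\cap V_i$, $B'=B\cap W_j$ with $|A'|\geq\sigma(mn)|V_i|$, $|B'|\geq\sigma(mn)|W_j|$ the density of $f$ on $A'\times B'$ differs from $r_{i,j}$ by at most $\sigma(mn)$, so the contribution of that cell is at most $\sigma(mn)\,\mu(V_i)\nu(W_j)$ up to lower-order terms coming from the small sub-blocks where $A'$ or $B'$ is tiny; summing over all cells and absorbing the lower-order terms (again by a harmless tightening of the constants or of $\sigma$) yields the bound. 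This cell-by-cell accounting is the main obstacle in that it requires care with the multiple sources of error (irregular pairs, boundary classes, extreme densities, small sub-blocks), but each source is individually of size $O(\sigma(mn))$ in the normalized measure, so no new idea beyond careful bookkeeping is needed. I note that an essentially identical deduction is written out in \cite[Lemma 2.11]{TaoSR} and \cite{Green-Tao}; the present lemma is quoted from there, and I would simply invoke that argument, adapted to carry the decay function $\sigma$ through the energy increment.
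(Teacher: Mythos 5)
The paper offers no proof of this lemma: it is quoted verbatim from \cite[Lemma 1.1]{Green-Tao}, where it is established directly by a two-scale stopping-time energy increment on conditional expectations of $f$, with $f_{\str}$ the conditional expectation on a coarse rectangular partition, $f_{\err}$ the difference between the conditional expectations at a fine and a coarse scale, and $f_{\psd}$ the remainder. Your plan takes a genuinely different --- and, in the usual telling, backwards --- route: you deduce the analytic form from graph regularity, whereas \cite[Lemma 2.11]{TaoSR}, which you cite as ``an essentially identical deduction,'' goes the other way, recovering graph regularity \emph{from} the analytic form.

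The substantive gap is in your first step. A partition of size $m,n\leq O_{\epsilon,\sigma}(1)$ in which all but a $\sigma(mn)$-fraction of pairs are $\sigma(mn)$-regular is \emph{not} what ``the usual Szemer\'edi regularity lemma'' provides; it is the strong regularity lemma with an arbitrary decay function, and one cannot obtain it by naively iterating the ordinary lemma (each refinement shrinks the target accuracy $\sigma(mn)$, which blows up the next bound, so the iteration does not terminate). Proving the strong form requires exactly the two-level stopping-time energy increment that proves the analytic statement directly, so the detour through graph regularity does not actually reduce the work. You also correctly flag that one must tighten the decay function to $\min(c\epsilon^2,\sigma)$ so that the irregular and exceptional cells have measure $O(\epsilon^2)$; otherwise $\sigma(mn)$ need not be $\leq\epsilon^2$ and the $\ell^2$-bound on $f_{\err}$ fails. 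Granting the strong regularity lemma and that tightening, your decomposition (putting $f_{\err}$ on the bad cells and $f_{\psd}$ on the good ones) and your cell-by-cell estimate for pseudorandomness are sound, so the plan is workable; but it should be framed as a (re)proof of strong regularity rather than as a reduction to ordinary Szemer\'edi regularity, and in practice the direct energy-increment proof of the analytic form is the simpler route.
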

 
 In \cite{Green-Tao}, pseudorandomness is formulated using the  ``box norm", and the equivalence with the definition above is a fundamental result in graph theory (see, e.g.,  \cite[Theorem 2.4]{GowQRG} for a precise statement in the context of functions). We also note that the previous lemma is stated in a ``bipartite form", which differs from \cite{Green-Tao} (see Section \ref{sec:bipartite} for further discussion).

As a segue to stability, let us  discuss how  classical graph regularity  can be used to obtain a prototype of Lemma \ref{lem:SRLa} for the indicator function of a bipartite graph. Fix  some $E\seq V\times W$ and $\epsilon>0$. Given (nonempty) $A\seq V$ and $B\seq W$, set $\alpha_{A,B}\coloneqq |E\cap (A\times B)|/|A\times B|$ (the density of $E$ on $A\times B$). Then Szemer\'{e}di's regularity lemma provides partitions $V=V_1\cup\ldots\cup V_m$ and $W=W_1\cup\ldots\cup W_n$ (with $m,n\leq O_\epsilon(1)$), and a set $\Sigma$ of pairs $(i,j)$, such that if $Z=\bigcup_{(i,j)\in\Sigma}V_i\times W_j$ then $|Z|\leq\epsilon|V\times W|$, and if $(i,j)\not\in\Sigma$ then  $(V_i,W_j)$ is $\epsilon$-regular. So for $(i,j)\not\in\Sigma$, $\alpha_{i,j}\coloneqq \alpha_{V_i,W_j}\approx_\epsilon \alpha_{A,B}$ for any $A\seq V_i$ and $B\seq W_j$ with $|A|\geq\epsilon|V_i|$ and $|B|\geq\epsilon|W_j|$. An easy calculation then shows that $\boldsymbol{1}_{E\cap (V_i\times W_j)}-\alpha_{i,j}$ is $2\epsilon$-pseudorandom as a function on $V_i\times W_j$. Thus if we set $f_{\str}=\sum_{(i,j)\not\in\Sigma}\alpha_{i,j}\boldsymbol{1}_{V_i\times W_j}$, $f_{\err}=\boldsymbol{1}_{E\cap Z}$, and $f_{\psd}=\boldsymbol{1}_E-f_{\str}-f_{\err}$, then $\boldsymbol{1}_E=f_{\str}+f_{\psd}+f_{\err}$, and we have that $f_{\str}$ is $(m,n)$-structured, $f_{\psd}$ is $2\epsilon$-pseudorandom, and $\|f_{\err}\|_2\leq \sqrt{\epsilon}$. Also,  $f_{\str}$ and $f_{\err}$ are $[0,1]$-valued, while $f_{\psd}$ is $[\nv 1,1]$-valued. 

In the setting of stable \emph{graphs}, the previous situation is qualitatively strengthened in two ways. Firstly, the set $\Sigma$ of irregular pairs does not appear, which removes the $f_{\err}$ term. Secondly, $\epsilon$-regularity is replaced by $\epsilon$-homogeneity, which is to say that each density $\alpha_{i,j}$ is within $\epsilon$ of some $\hat{\alpha}_{i,j}\in\{0,1\}$.\footnote{This is essentially equivalent to a suitable graph-theoretic analogue of functional homogeneity (as in Definition \ref{def:hom}), up to uniform change in $\epsilon$.}  
 Thus $(V_i,W_j;E)$ is almost complete or empty, and $\hat{\alpha}_{i,j}$ is the ``generic" value of $\boldsymbol{1}_E$ on $V_i\times W_j$. Consequently, if one \emph{redefines} $f_{\str}$ above so that $\alpha_{i,j}$ is replaced by $\hat{\alpha}_{i,j}$, then $f_{\str}$ is still $(m,n)$-structured (in fact, it is the indicator function of the union of all $V_i\times W_j$ on which $E$ is almost complete), and $f_{\psd}=f-f_{\str}$ is a $\{\nv 1,0,1\}$-valued function whose support has size at most $\epsilon|V\times W|$. This motivates the following remark.

\begin{remark}\label{rem:supp}
Suppose $f\colon V\times W\to [\nv 1,1]$ is an arbitrary function such that $|{\supp}(f)|\leq\epsilon|V\times W|$. Then $\|f\|_1\leq\epsilon$, which implies that  \emph{for any other function} $g\colon V\times W\to [\nv 1,1]$, we have $|\langle f,g\rangle|\leq\|f\|_1\leq\epsilon$. So $f$ is automatically $\epsilon$-pseudorandom. 
In other words, having small support can be viewed as a very strong form of pseudorandomness.
\end{remark}

In the setting of stable functions, we will see the same picture emerge, in the sense that $f_{\str}$ will be determined by the ``generic value" of $f$ on each homogeneous pair, and $f_{\psd}$ will have small support. At this level, one could obtain from Theorem \ref{thm:main-finite2} a decomposition for stable functions that is very much like the one described above for stable graphs (see Remark \ref{rem:ferr}).
However, note that the above discussion of stable graphs does not include the stronger control on pseudorandomness using a decay function, as in Lemma \ref{lem:SRLa}. In order to re-introduce this aspect, we will need to deal with the issues of the exceptional sets $V_0$ and $W_0$, and the appearance of $\delta$ and $\epsilon$ in the homogeneous pairs. To handle the latter issue, we will relax the  ``structured" component $f_{\str}$ by allowing for some uniformly bounded fluctuation. 

\begin{definition}\label{def:approxstr}
A function $f\colon V\times W\to [0,1]$ is \textbf{$(\delta;m,n)$-structured} if there is some $(m,n)$-structured function $g\colon V\times W\to [0,1]$ such that $\|f-g\|_\infty\leq\delta$. 
\end{definition}

As for the exceptional sets, we will deal with those in the same way as one deals with irregular pairs, which is to put them in an error term. This will yield an error term supported on a set of the form $Z'=(V_0\times W)\cup (V\times W_0)$, where $|V_0|\leq\epsilon|V|$ and $|W_0|\leq\epsilon|W|$. Note that $|Z'|\leq 2\epsilon|V\times W|$. So $Z'$ is comparable to the error set $Z$ above in terms of size, and we similarly get bounded $\ell^2$-norm for any function supported on $Z'$. On the other hand, it is important to note that $Z$ and $Z'$ are \emph{qualitatively} different in a way undetected by norms. Indeed, $Z'$ is built from essentially unary ingredients, while $Z$ is necesssarily binary. More precisely, if a function $f\colon V\times W\to [0,1]$ is decomposed as $g+h$, where $h$ is supported on $Z'$ (and $g$ has some desirable properties), then after removing a small amount of  $V$ and $W$ individually, one can assume $f=g$. On the other hand, if $h$ were supported on $Z$, then one would need to remove some possibly complicated subset of $V\times W$, which represents a more drastic change to the nature of $f$.

To close this discussion, we will say that a function $f\colon V\times W\to [\nv 1,1]$ has \textbf{$\epsilon$-structured support} if its support is contained in a set of the form $Z'$ above. Note that if $f$ has $\epsilon$-structured support then $|{\supp}(f)|\leq 2\epsilon|V\times W|$, and so $\|f\|_2\leq \sqrt{2\epsilon}$.

\begin{theorem}\label{thm:function-version}
Let $V$ and $W$ be finite sets, and suppose $f\colon V\times W\to [0,1]$ is a $(k,\delta)$-stable function. Then for any $\epsilon>0$ and any function $\sigma\colon \N\to (0,1)$, there are $m,n\leq O_{k,\delta,\epsilon,\sigma}(1)$ and a decomposition $f=f_{\str}+f_{\psd}+f_{\err}$ such that:
\begin{enumerate}[$(i)$]
\item $f_{\str}$ is $(5\delta+\epsilon;m,n)$-structured, 
\item $|{\supp}(f_{\psd})|\leq \sigma(mn)|V\times W|$, and 
\item $f_{\err}$ has $\epsilon$-structured support.
\end{enumerate}
 Moreover, $f_{\psd}$ is $[\nv 1,1]$-valued, $f_{\str}$ and $f_{\err}$ are $[0,1]$-valued, and $f_{\err}\in\{ f_{\str},|f_{\psd}|\}^\perp$. 
\end{theorem}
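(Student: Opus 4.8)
The plan is to derive Theorem~\ref{thm:function-version} directly from Theorem~\ref{thm:main-finite-thm} by carefully packaging the output of that theorem into the three pieces $f_{\str}$, $f_{\psd}$, $f_{\err}$. First I would apply Theorem~\ref{thm:main-finite-thm} to $f$ with the given $\epsilon$ and a decay function $\sigma'$ to be chosen (roughly $\sigma'(N)=\tfrac{1}{2}\sigma(N)$, adjusting for the fact that the exceptional sets will slightly change the ambient index $mn$ versus the number of genuine pieces). This yields partitions $V=V_0\cup V_1\cup\dots\cup V_m$ and $W=W_0\cup W_1\cup\dots\cup W_n$ with $m,n\le O_{k,\delta,\epsilon,\sigma}(1)$, such that each pair $(V_i,W_j)$ with $(i,j)\in[m]\times[n]$ is $(5\delta+\epsilon;\sigma'(mn))$-homogeneous, witnessed by some $r_{i,j}\in[0,1]$ (as in Remark~\ref{rem:hompairs}/Remark~\ref{rem:complexity}), and $|V_0|\le\epsilon|V_1|$, $|W_0|\le\epsilon|W_1|$.

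\textbf{Defining the three pieces.} Set $f_{\str}=\sum_{(i,j)\in[m]\times[n]} r_{i,j}\,\boldsymbol{1}_{V_i\times W_j}$; this is manifestly $(m,n)$-structured (with exceptional blocks $V_0,W_0$ carrying value $0$), so the only content is showing it $(5\delta+\epsilon;m,n)$-approximates the restriction of $f$ off the exceptional region. Precisely, set $f_{\err}=(f-f_{\str})\cdot\boldsymbol{1}_{Z'}$ where $Z'=(V_0\times W)\cup(V\times W_0)$, so $f_{\err}$ has $\epsilon$-structured support by construction (and is $[0,1]$-valued because it agrees with $f$ on $Z'$ and is $0$ elsewhere — here I use that $f_{\str}$ vanishes on $Z'$, which it does since those blocks are indexed by $0$). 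Then define $f_{\psd}=f-f_{\str}-f_{\err}$, which is supported on $\bigcup_{(i,j)\in[m]\times[n]}(V_i\times W_j)$. The key estimate is that on each genuine block $V_i\times W_j$, homogeneity with parameter $\sigma'(mn)$ forces $|f(a,b)-r_{i,j}|\le 5\delta+\epsilon$ for all but a $\sigma'(mn)$-small (in a doubly-nested sense) set of pairs. On the ``good'' pairs, $|f_{\psd}(a,b)|\le 5\delta+\epsilon$; but that is not small support. So instead I would \emph{split}: set $f_{\str}$ to agree with $f$ (not $r_{i,j}$) on the small bad sets? No — rather, keep $f_{\str}$ block-constant and absorb the $5\delta+\epsilon$ fluctuation into the $(5\delta+\epsilon;m,n)$-structured tolerance of $f_{\str}$ itself: redefine $f_{\str}$ on the good pairs as the block constant, note $\|f_{\str}-(f-f_{\err})\|_\infty$ is controlled by $5\delta+\epsilon$ \emph{only on the good part}, and move all of $f-f_{\str}-f_{\err}$ on the \emph{bad} part into $f_{\psd}$. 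Thus $f_{\psd}$ is supported precisely on the bad pairs, whose total measure is at most $\sigma(mn)|V\times W|$ by summing the homogeneity defect over blocks. This requires the standard unpacking of Definition~\ref{def:hom}: within each $(V_i,W_j)$ there is $W'_{i,j}\subseteq W_j$ with $|W_j\setminus W'_{i,j}|\le\sigma'(mn)|W_j|$, and for $b\in W'_{i,j}$ a bad set $B_b\subseteq V_i$ with $|B_b|\le\sigma'(mn)|V_i|$; the total bad set in $V_i\times W_j$ has size $\le(|W_j\setminus W'_{i,j}|\cdot|V_i|)+(|W'_{i,j}|\cdot\sigma'(mn)|V_i|)\le 2\sigma'(mn)|V_i||W_j|$, and summing gives $\le 2\sigma'(mn)|V\times W|$, so choosing $\sigma'=\tfrac12\sigma$ (composed with the relabeling $mn$) suffices. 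Here one must be slightly careful that the index passed to $\sigma$ is $mn$ both times, which it is since the partition sizes are the same.

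\textbf{The orthogonality and range conditions.} The claim $f_{\err}\in\{f_{\str},|f_{\psd}|\}^\perp$ under the normalized inner product: $\langle f_{\err},f_{\str}\rangle=\tfrac{1}{|V||W|}\sum f_{\err}f_{\str}$, and $f_{\err}$ is supported on $Z'$ while $f_{\str}$ vanishes on $Z'$, so the product is identically $0$; likewise $f_{\psd}$ is supported on $\bigcup_{(i,j)\in[m]\times[n]}(V_i\times W_j)$, which is disjoint from $Z'$, so $\langle f_{\err},|f_{\psd}|\rangle=0$. The range conditions: $f_{\str}$ is $[0,1]$-valued since each $r_{i,j}\in[0,1]$ and the exceptional value $0$ is in $[0,1]$; $f_{\err}$ agrees with $f$ on $Z'$ hence is $[0,1]$-valued; and $f_{\psd}=f-f_{\str}$ on the good$+$bad part of the genuine blocks is a difference of two $[0,1]$-valued functions, hence $[\nv 1,1]$-valued.

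\textbf{The main obstacle} I anticipate is the bookkeeping of ``which small set goes where'' so that $f_{\psd}$ genuinely has small support rather than small norm: the decay function $\sigma$ is exactly the tool Theorem~\ref{thm:main-finite-thm} provides to make the doubly-nested defect summable to $\sigma(mn)|V\times W|$, and the subtlety is that homogeneity is an asymmetric ``for all but $\epsilon|W_j|$ many $b$, for all but $\epsilon|V_i|$ many $a$'' statement, so one must union the ``bad columns'' and the ``bad entries in good columns'' and verify the count, then feed $\tfrac12\sigma$ (or $\tfrac14\sigma$ to be safe) into Theorem~\ref{thm:main-finite-thm} at the start. Everything else — the structured form of $f_{\str}$, the $\epsilon$-structured support of $f_{\err}$, the orthogonality, and the range constraints — is immediate from the definitions once the pieces are set up as above.
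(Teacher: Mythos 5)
Your high-level plan is the same as the paper's, and you correctly identify the central tension: with $f_{\str}$ block-constant on the genuine blocks, $f_{\psd}=f-f_{\str}-f_{\err}$ has small $\ell^2$-norm but not small support. You even name the correct resolution --- ``absorb the $5\delta+\epsilon$ fluctuation into the $(5\delta+\epsilon;m,n)$-structured tolerance of $f_{\str}$ itself'' --- but then immediately contradict it by declaring ``keep $f_{\str}$ block-constant'' and ``redefine $f_{\str}$ on the good pairs as the block constant.'' If $f_{\str}$ really equals $r_{i,j}$ on the good pairs of $V_i\times W_j$, then on those pairs $f_{\str}+f_{\psd}+f_{\err}=r_{i,j}\neq f(a,b)$ (since you have moved only the \emph{bad}-part residual into $f_{\psd}$ and $f_{\err}$ vanishes off $Z'$), so the decomposition no longer sums to $f$; the claim ``thus $f_{\psd}$ is supported precisely on the bad pairs'' does not follow from the construction you wrote down. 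The good-part residual $f(a,b)-r_{i,j}$, while bounded by $5\delta+\epsilon$ in magnitude, is generically nonzero and has nowhere to go.

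The fix, which the paper uses, is the opposite of what you wrote: set $f_{\str}(a,b)=f(a,b)$ on the good pairs of each $V_i\times W_j$ (i.e., the pairs where $f(a,b)\approx_{5\delta+\epsilon}r_{i,j}$), $f_{\str}(a,b)=r_{i,j}$ on the bad pairs, and $f_{\str}=0$ on $Z$. Then $\|f_{\str}-g\|_\infty\leq 5\delta+\epsilon$ where $g=\sum r_{i,j}\boldsymbol{1}_{V_i\times W_j}$, so $f_{\str}$ is $(5\delta+\epsilon;m,n)$-structured even though it is not block-constant. With this $f_{\str}$, and $f_{\err}=f\boldsymbol{1}_Z$, the residual $f_{\psd}=f-f_{\str}-f_{\err}$ \emph{does} vanish on the good pairs and on $Z$, hence is supported on the bad pairs --- and then your counting argument ($\leq 2\sigma'(mn)|V\times W|$ with $\sigma'=\tfrac12\sigma$) gives the support bound, and your orthogonality and range observations all go through. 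Note also that you explicitly considered and rejected ``set $f_{\str}$ to agree with $f$ on the bad sets'' --- the correct move is to have $f_{\str}$ agree with $f$ on the \emph{good} sets, which you never actually consider.
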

\begin{proof}
Apply Theorem \ref{thm:main-finite-thm} to obtain partitions $V=V_0\cup V_1\cup\ldots\cup V_m$ and $W=W_0\cup W_1\cup\ldots \cup W_n$, with $m,n\leq O_{k,\delta,\epsilon,\sigma}(1)$, so that $|V_0|\leq \epsilon|V_1|$, $|W_0|\leq \epsilon|W_1|$, and for all $(i,j)\in [m]\times [n]$, $(V_i,W_j)$ is $(5\delta+\epsilon;\frac{1}{2}\sigma(mn))$-homogeneous. For each $(i,j)\in [m]\times [n]$ choose $r_{i,j}\in [0,1]$ and $W_{i,j}\seq W_j$ such that $|W_{i,j}|\geq (1-\frac{1}{2}\sigma(mn))|W_j|$ and, for all $b\in W_{i,j}$, $|\{a\in V_i:f(a,b)\approx_{5\delta+\epsilon} r_{i,j}\}|\geq (1-\frac{1}{2}\sigma(mn))|V_i|$. 

Set $Z=(V_0\times W)\cup (V\times W_0)$. Let $f_{\str}\colon V\times W\to [0,1]$ be defined by
\[
f_{\str}(a,b)=\begin{cases}
f(a,b) & \text{if $(a,b)\in V_i\times W_j$ and $f(a,b)\approx_{5\delta+\epsilon}r_{i,j}$,}\\
r_{i,j} & \text{if $(a,b)\in V_i\times W_j$ and $f(a,b)\not\approx_{5\delta+\epsilon}r_{i,j}$, and}\\
0 & \text{if $(a,b)\in Z$.}
\end{cases}
\]
Define $f_{\err}\coloneqq f\boldsymbol{1}_Z$ and $f_{\psd}\coloneqq f-f_{\str}-f_{\err}$. Then other than $(ii)$,  the claims in the theorem follow easily by construction. So we show $(ii)$.

Set $\gamma=\frac{1}{2}\sigma(mn)$. Given $(i,j)\in [m]\times [n]$ and $b\in W$, partition $V_i=X^b_{i,j}\cup Y^b_{i,j}$ so that $a\in X^b_{i,j}$ if and only if $f(a,b)\approx_{5\delta+\epsilon}r_{i,j}$. So if $b\in W_{i,j}$ then $|Y^b_{i,j}|\leq \gamma|V_i|$ and $f_{\psd}(a,b)=0$ for all $a\in X^b_{i,j}$.  It follows that
\[
\textstyle \supp(f_{\psd})\seq \bigcup_{(i,j)\in[m]\times[n]}\big((V_i\times (W_j\backslash W_{i,j}))\cup\bigcup_{b\in W_{i,j}}(Y^b_{i,j}\times\{b\})\big).
\]
So $|{\supp}(f_{\psd})|\leq \sum_{i,j}(\gamma|V_i||W_j|+\gamma|V_i||W_{i,j}|)\leq 2\gamma|V\times W|=\sigma(mn)|V\times W|$.
\end{proof}

\begin{remark}
The previous proof also implies that the underlying partition of $f_{\str}$ involves $f$-definable pieces of bounded complexity. We could instead use Theorem \ref{thm:equip} and obtain an equipartition. (Note that we only need $|V_0|\leq\epsilon|V|$ and $|W_0|\leq\epsilon|W|$ to know that $f_{\err}$ has $\epsilon$-structured support.)
\end{remark}

\begin{remark}\label{rem:discrete}
Call a function $f\colon V\times W\to [0,1]$ \emph{$k$-stable} if it is $(k,\delta)$-stable for some $\delta>0$ such that  for all  $x,y\in V\times W$, if $f(x)\neq f(y)$ then $|f(x)-f(y)|>10\delta$. For example this includes the setting of $k$-stable bipartite graphs (after changing $k$ as discussed in Remark \ref{rem:stable-graphs}). Suppose $f\colon V\times W\to [0,1]$ is $k$-stable, and fix $V'\seq V$ and $W'\seq W$ such that $(V',W')$ is $(5\delta+\epsilon;\epsilon)$-homogeneous for sufficiently small $\epsilon$. Then it follows that there is some $r\in \text{Im}(f)$ such that for all but at most $\epsilon|W'|$-many $b\in W'$, for all but at most $\epsilon|V'|$-many $a\in V'$, $f(a,b)=r$, \emph{and dually}, for all but at most $\epsilon|V'|$-many $a\in V'$, for all but at most $\epsilon|W'|$-many $b\in W'$, $f(a,b)=r$. Altogether, one obtains a version of Theorem \ref{thm:function-version} for $k$-stable functions in which $f_{\str}$ is  $(m,n)$-structured. 
\end{remark}

\begin{remark}\label{rem:ferr}
One can obtain a decomposition of $(k,\delta)$-stable functions involving \emph{no error term}, at the cost of the decay function. Indeed, in Theorem \ref{thm:function-version}, if we replace $f_{\psd}$ with $f_{\psd}+f_{\err}$, then we have $|{\supp}(f_{\psd})|\leq (\sigma(mn)+2\epsilon)|V\times W|$. Note that we can also write $f_{\str}=f'_{\str}+h$ where $f'_{\str}$ is $(m,n)$-structured and $\|h\|_\infty\leq 5\delta+\epsilon$. So if one is willing to allow $f_{\psd}$ to involve $\delta$, then $f_{\str}$ can be made perfectly structured by  replacing $f_{\psd}$ with $f_{\psd}+h$. In this case, $f_{\psd}$ no longer has small support, but one still has  a bound on $\|f_{\psd}\|_1$ in terms of $\delta$, $\epsilon$, and $\sigma(mn)$.  Therefore $f_{\psd}$ is still pseudorandom in a strong qualitative sense (see Remark \ref{rem:supp}). 
\end{remark}

\subsection{Bipartite versus symmetric}\label{sec:bipartite}

In this section, we clarify some subtleties regarding our bipartite viewpoint on graphs and functions. First, recall that any graph $(V;E)$ can be ``coded" as bipartite graph $(V,V;E)$ (sometimes called the \emph{bipartite double cover} of $(V;E)$). Similarly, a $[0,1]$-valued binary function $f$ on a set $V$ can be viewed as a bipartitioned function $f\colon V\times V\to [0,1]$.  From this perspective, the bipartite setting is more flexible since it allows one to distinguish between the two sets. This is also a very natural setting in which to apply model theoretic tools. On the other hand, there is one  issue with the bipartite approach, which is usually not discussed in the model theoretic literature on regularity. In particular,  given a graph $(V;E)$ if one applies a bipartite regularity lemma  to $(V,V;E)$ then this results in two potentially different partitions of $V$.

One way to remedy this issue is to again exploit the decay function. In particular, suppose we have partitions $ V_1\cup\ldots\cup V_m$ and $W_1\cup \ldots\cup W_n$ of the same finite set $V$, in which each pair $(V_i,W_j)$ satisfies a desired homogeneity property with respect to some $f\colon V\times V\to [0,1]$ (we ignore the issue of exceptional sets for the moment). Then we have a common refinement, which partitions $V$ into at  most $mn$ sets. Thus, given a target decay function $\sigma\colon \N\to (0,1)$, one can define a modified function $\tau$ (similar to the proof of Theorem \ref{thm:equip}) so that if  the initial partitions are homogeneous with respect to $\tau$, then any sufficiently large piece of the common refinement maintains homogeneity using $\sigma$, while the remaining small pieces can be put into an exceptional set. Moreover, if we also start out with two exceptional sets $V_0$ and $W_0$, then these can be added to the larger exceptional set, along with any $W_j$ that intersects $V_0$ in a large set (and vice versa).

As a final remark, we note that when $f\colon V\times V\to [0,1]$ is a \emph{symmetric} function, one can also address this issue ``upstairs" at the model-theoretic stage. Indeed, given a continuous structure $M$ and a symmetric $[0,1]$-valued formula $\varphi(x,y)$, with $x$ and $y$ variables of the same sort, the type spaces $S_\varphi(M)$ and $S_{\varphi^*}(M)$ can be naturally identified. Therefore, in the setting of Lemma \ref{lem:upstairs-hom}, if the measures $\mu$ and $\nu$ are the same (for example, if they  both arise from the pseudofinite average value functional), then one can prove the lemma using \emph{the same} $\varphi$-formulas on both sides. Carrying this through the rest of the steps, we obtain a single partition of $V$.

\begin{appendices}

\appendix

\renewcommand*{\thesection}{}
\renewcommand*{\thesubsection}{\Alph{section}.\arabic{subsection}}
\renewcommand{\thetheorem}{\Alph{section}.\arabic{theorem}}
\section{}

We  discuss a natural variation of $(k,\delta)$-stability (as defined in Definition \ref{def:stable}), which is closer to the definition of $k$-stability for bipartite graphs. 

\begin{definition}\label{def:stable-alt}
A function $f\colon V\times W\to [0,1]$ is \textbf{${}^*\!(k,\delta)$-stable} if there do not exist $a_1,\ldots,a_k\in V$, $b_1,\ldots,b_k\in W$, and $r\in [0,1]$ such that $f(a_i,b_j)\geq r+\delta$ if $i\leq j$, and $f(a_i,b_j)\leq r$ if $i>j$. 
\end{definition}

Note that a binary relation $E\seq V\times W$ is $k$-stable (as defined in Remark \ref{rem:stable-graphs}) if and only if $\boldsymbol{1}_E$ is ${}^*\!(k,1)$-stable (if and only if $\boldsymbol{1}_E$ is ${}^*\!(k,\delta)$-stable for all $\delta>0$). As we previously observed, for ``discretely-valued" functions such as $\boldsymbol{1}_E$, ${}^*\!(k,\delta)$-stability implies $(\ell,\delta)$-stability for suitable $\ell$. On the other hand, there is a small discrepancy for arbitrary functions.

\begin{proposition}\label{prop:twostables}$~$
\begin{enumerate}[$(a)$]
\item Any $(k,\delta)$-stable $[0,1]$-valued function is ${}^*\!(k,\delta)$-stable.
\item For any $k\geq 1$ and $\delta'>\delta>0$, there is some $\ell\geq 1$ such that any ${}^*\!(k,\delta)$-stable $[0,1]$-valued function is $(\ell,\delta')$-stable. 
\end{enumerate}
\end{proposition}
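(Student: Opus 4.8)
Part $(a)$ is immediate, and I would dispatch it first: if $a_1,\dots,a_k\in V$, $b_1,\dots,b_k\in W$ and $r\in[0,1]$ witness that $f$ is not ${}^*\!(k,\delta)$-stable, then for all $i<j$ we have $f(a_i,b_j)\geq r+\delta$ (since $i\leq j$) and $f(a_j,b_i)\leq r$ (since $j>i$), hence $f(a_i,b_j)-f(a_j,b_i)\geq\delta$; so the same tuples witness that $f$ is not $(k,\delta)$-stable.

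For $(b)$ the plan is a Ramsey argument establishing the contrapositive. Fix $k\geq 1$ and $\delta'>\delta>0$. First I would fix a partition of $[0,1]$ into $N$ intervals $I_1,\dots,I_N$, each of width strictly less than $\frac12(\delta'-\delta)$; the key elementary observation is that if $x,y\in[0,1]$ satisfy $|x-y|\geq\delta'$ and lie in $I_p$, $I_q$ respectively, then $p\neq q$ and one of $I_p,I_q$ lies entirely above the other with gap more than $\delta$ (i.e.\ the infimum of the upper interval exceeds the supremum of the lower one by at least $\delta$). Then I would set $\ell:=R_{N^2}(2k)$, where $R_c(t)$ denotes the least integer such that every $c$-colouring of the $2$-element subsets of $[R_c(t)]$ admits a monochromatic $t$-subset; note $\ell$ depends only on $k,\delta,\delta'$. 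Now suppose $f\colon V\times W\to[0,1]$ is not $(\ell,\delta')$-stable, witnessed by $a_1,\dots,a_\ell\in V$ and $b_1,\dots,b_\ell\in W$ with $|f(a_i,b_j)-f(a_j,b_i)|\geq\delta'$ for all $i<j$. I would colour each pair $\{i,j\}$ with $i<j$ by the pair $(p,q)$ for which $f(a_i,b_j)\in I_p$ and $f(a_j,b_i)\in I_q$; by choice of $\ell$ there is a monochromatic set $\{t_1<\dots<t_{2k}\}\seq[\ell]$, say of colour $(p_0,q_0)$. Let $J_+$ be the upper and $J_-$ the lower of the two intervals $I_{p_0},I_{q_0}$, so $\inf J_+\geq\sup J_-+\delta$.

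Next I would observe that, after possibly replacing $(t_1,\dots,t_{2k})$ by its reversal $(t_{2k},\dots,t_1)$ — which swaps the two off-diagonal triangles of the pattern, and so absorbs the case distinction as to which of $I_{p_0},I_{q_0}$ is the upper one — we may assume that for all $u<v$ in $\{1,\dots,2k\}$ we have $f(a_{t_u},b_{t_v})\in J_+$ and $f(a_{t_v},b_{t_u})\in J_-$. Finally, to read off a ${}^*\!(k,\delta)$-pattern I would set $r:=\sup J_-$ and, for $p,q\in\{1,\dots,k\}$, define $A_p:=a_{t_{2p-1}}$ and $B_q:=b_{t_{2q}}$: if $p\leq q$ then $2p-1<2q$, so $f(A_p,B_q)\in J_+$ and thus $f(A_p,B_q)\geq r+\delta$, while if $p>q$ then $2q<2p-1$, so $f(A_p,B_q)\in J_-$ and thus $f(A_p,B_q)\leq r$. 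Hence $A_1,\dots,A_k$, $B_1,\dots,B_k$, $r$ witness that $f$ is not ${}^*\!(k,\delta)$-stable, which is exactly what the contrapositive of $(b)$ requires, with $\ell=R_{N^2}(2k)$.

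The only genuinely non-formal point, and the one I expect to need care, is the interleaving device $A_p=a_{t_{2p-1}}$, $B_q=b_{t_{2q}}$: its purpose is to avoid the diagonal entries $f(a_{t_s},b_{t_s})$, which a colouring of $2$-element subsets simply cannot see and which need not land in $J_+$. Everything else — verifying the interval/gap claim, checking that monochromaticity forces all off-diagonal entries of the $2k\times 2k$ array into $J_+\cup J_-$ as stated, confirming that reversal has the advertised effect, and checking the two displayed inequalities — is routine bookkeeping.
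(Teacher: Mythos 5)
Your proof is correct, and part (a) matches the paper's (trivial) observation. For part (b) you take a genuinely different, though structurally related, route. The paper does two sequential Ramsey applications: first a $2$-colouring of pairs by the sign of $f(a_i,b_j)-f(a_j,b_i)$ to normalise direction, then (after extracting a monochromatic set of size $R_m(2k+1)$) an $m$-colouring by which width-$1/m$ interval contains only the \emph{lower} value $f(a_j,b_i)$ — the upper value is then controlled for free via $f(a_i,b_j)\geq f(a_j,b_i)+\delta'$, which is why the paper can get away with intervals of width $\leq\delta'-\delta$ rather than $\tfrac12(\delta'-\delta)$. You instead do a single Ramsey application with an $N^2$-colouring recording the interval locations of \emph{both} off-diagonal values at once, and you absorb the direction case by reversing the monochromatic sequence afterwards; the price for not exploiting the $\delta'$-gap inequality is the factor-of-two narrower intervals. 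Both arguments finish with the same diagonal-avoidance device of interleaving odd and even indices (you take $a_{t_{2p-1}},b_{t_{2q}}$ from a monochromatic $2k$-set; the paper takes $a_{2i},b_{2i+1}$ from a monochromatic $(2k+1)$-set). Your bound $\ell=R_{N^2}(2k)$ and the paper's $\ell=R_2(R_m(2k+1))$ are incomparable in general, but both depend only on $k,\delta,\delta'$, which is all that is claimed. The one point worth stating more carefully in a final write-up is the reversal step: it should be made explicit that reversing $(t_1,\dots,t_{2k})$ replaces the original upper/lower triangles with each other, hence swaps the roles of $I_{p_0}$ and $I_{q_0}$, so that one really may assume $I_{p_0}=J_+$ and $I_{q_0}=J_-$ without loss of generality.
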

\begin{proof}
Part $(a)$. This is straightforward to check.

Part $(b)$. Given $m,n\geq 1$ let $R_m(n)$ be an integer such that any $m$-coloring of ${[R_m(n)]\choose 2}$ admits a monochromatic subset of size $n$. Fix $k\geq 1$ and $\delta'>\delta>0$. Set $\epsilon=\delta'-\delta$ and fix $m\geq\epsilon\inv$. Set $\ell=R_2(R_m(2k+1))$. Suppose $f\colon V\times W\to [0,1]$ is not $(\ell,\delta')$-stable. We will show that $f$ is not ${}^*\!(k,\delta)$-stable.

Fix $a_1,\ldots,a_\ell\in V$ and $b_1,\ldots,b_\ell\in W$ such that $|f(a_i,b_j)-f(a_j,b_i)|\geq\delta'$ for all $i<j$. Consider a $2$-coloring of ${[\ell]\choose 2}$ according to whether $f(a_i,b_j)\geq f(a_j,b_i)+\delta'$ or $f(a_i,b_j)\leq f(a_j,b_i)-\delta'$. Set $n=R_m(2k+1)$. By choice of $\ell$, and after relabeling and reversing the order (if necessary), we may assume there are $a_1,\ldots,a_n\in V$ and $b_1,\ldots,b_n\in W$ such that $f(a_i,b_j)\geq f(a_j,b_i)+\delta'$ for all $i<j$. 

For $1\leq t\leq m$, set $I_t=[\frac{t-1}{m},\frac{t}{m}]$. Consider an $m$-coloring of ${[n]\choose 2}$ by the minimal $t$ such that $f(a_j,b_i)\in I_t$ (for $i<j$). By choice of $n$, and after relabeling, we may assume we have $1\leq t\leq m$, $a_1,\ldots,a_{2k+1}\in V$, and $b_1,\ldots,b_{2k+1}\in W$ such that for all $i<j$, $f(a_i,b_j)\geq f(a_j,b_i)+\delta'$ and $f(a_j,b_i)\in I_t$. Set $r=\frac{t}{m}$. Then for all $i<j$ we have $f(a_j,b_i)\leq r$ and $f(a_i,b_j)\geq r-\frac{1}{m}+\delta'\geq r+\delta$. Altogether, the sequences $(a_{2i})_{i=1}^k$ and $(b_{2i+1})_{i=1}^k$ witness that $f$ is not ${}^*\!(k,\delta)$-stable.
\end{proof}

In the context of a complete theory however, the discrepancy can be removed.

\begin{corollary}
Let $T$ be a complete theory in a continuous language $\cL$, and fix a $[0,1]$-valued $\cL$-formula $\varphi(x,y)$. 
Fix $\delta>0$, and assume $\varphi(x,y)$ is ${}^*\!(k,\delta)$-stable in $T$ for some $k\geq 1$. Then $\varphi(x,y)$ is $(\ell,\delta)$-stable in $T$ for some $\ell\geq 1$.
\end{corollary}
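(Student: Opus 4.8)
The plan is to reduce to Proposition~\ref{prop:twostables}(b). Applied to a pair $\delta''<\delta'$, that proposition produces $(\ell,\delta')$-stability out of ${}^*\!(k,\delta'')$-stability, so to land on $(\ell,\delta)$-stability \emph{with the same $\delta$} it suffices to first improve the hypothesis: show that if $\varphi$ is ${}^*\!(k,\delta)$-stable in $T$ then it is already ${}^*\!(k,\eta)$-stable in $T$ for some $\eta\in(0,\delta)$. In other words, I would prove that ${}^*\!(k,\delta)$-stability in $T$ is an open condition in $\delta$, in direct analogy with Lemma~\ref{lem:stableopen}, and then invoke Proposition~\ref{prop:twostables}(b) with the pair $(\eta,\delta)$.

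To set this up, first I would eliminate the existential quantifier over $r$ in Definition~\ref{def:stable-alt}. Assuming $k\geq 2$ (the case $k=1$ is degenerate and already implies ${}^*\!(2,\delta)$-stability), a short check shows that a $[0,1]$-valued $f$ fails to be ${}^*\!(k,\delta)$-stable exactly when there are $a_1,\dots,a_k$ and $b_1,\dots,b_k$ with $\min_{i\leq j}f(a_i,b_j)-\max_{i>j}f(a_i,b_j)\geq\delta$; the point is that $r\coloneqq\max_{i>j}f(a_i,b_j)$ then automatically lies in $[0,1]$ and witnesses the configuration. Accordingly one forms the $\cL$-formula
\[
\theta^*(\xbar,\ybar)\coloneqq\delta\dotminus\Big(\min_{i\leq j}\varphi(x_i,y_j)\dotminus\max_{i>j}\varphi(x_i,y_j)\Big),\qquad \xbar=(x_1,\dots,x_k),\ \ybar=(y_1,\dots,y_k),
\]
and observes that $\varphi$ is ${}^*\!(k,\delta)$-stable in $T$ if and only if $\theta^*(\abar,\bbar)>0$ for every $M\models T$ and all $\abar,\bbar\in M^k$. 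Now a compactness argument identical in shape to the proof of Lemma~\ref{lem:stableopen} applies: if $\varphi$ is ${}^*\!(k,\delta)$-stable in $T$, then the set of conditions $\{\theta^*(\xbar,\ybar)\leq\tfrac1n:n\geq1\}$ is inconsistent with $T$, so $\inf_{\xbar,\ybar}\theta^*\geq\epsilon$ in every model of $T$ for some $\epsilon>0$; since $\theta^*\leq\delta$ always, necessarily $\epsilon\leq\delta$. Unwinding $\theta^*$, this says precisely that $\varphi$ is ${}^*\!(k,\eta)$-stable in $T$ for $\eta\coloneqq\delta-\tfrac\epsilon2\in(0,\delta)$. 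Finally, Proposition~\ref{prop:twostables}(b), with $\eta$ in the role of its $\delta$ and $\delta$ in the role of its $\delta'$, yields $\ell\geq1$ such that every ${}^*\!(k,\eta)$-stable $[0,1]$-valued function is $(\ell,\delta)$-stable; since $\varphi^M\colon M^x\times M^y\to[0,1]$ is ${}^*\!(k,\eta)$-stable for each $M\models T$, it follows that $\varphi$ is $(\ell,\delta)$-stable in $T$.

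The crux is the open-condition step: Proposition~\ref{prop:twostables}(b) unavoidably loses a positive amount of slack (the width of the intervals in its Ramsey argument), so it cannot by itself produce $(\ell,\delta)$-stability, and it is exactly the compactness/completeness of $T$ that recovers this loss. A more self-contained alternative, avoiding Proposition~\ref{prop:twostables}(b), would be to show directly that ${}^*\!(k,\delta)$-stability in $T$ implies $\delta$-stability in $T$: from an infinite $\delta$-ladder (after a sign Ramsey) one extracts, in a saturated model, an \emph{indiscernible} $\delta$-ladder $(a_i,b_i)_i$, so $\varphi(a_i,b_j)$ equals a constant $v_>$ when $i<j$ and $v_<$ when $i>j$, with $v_>\geq v_<+\delta$; choosing interleaved indices $s_1<t_1<\dots<s_k<t_k$ and setting $r=v_<$ produces a ${}^*\!(k,\delta)$-configuration, a contradiction, after which one passes from $\delta$-stability to $(\ell,\delta)$-stability via the compactness argument of Lemma~\ref{lem:stableopen}. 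I would present the first route, since it reuses the appendix's machinery with minimal extra work.
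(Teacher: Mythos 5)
Your argument follows the paper's proof exactly: both establish that ${}^*\!(k,\delta)$-stability in $T$ is an open condition in $\delta$ (in the sense of Lemma~\ref{lem:stableopen}), obtaining ${}^*\!(k,\eta)$-stability for some $\eta<\delta$, and then apply Proposition~\ref{prop:twostables}(b) to conclude $(\ell,\delta)$-stability. You have merely written out the compactness details (including the elimination of $r$ via $r=\max_{i>j}f(a_i,b_j)$) that the paper leaves to the reader, so this is a correct and complete version of the paper's own argument.
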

\begin{proof}
First one checks that ${}^*\!(k,\delta)$-stability in $T$ is an open condition in the sense of Lemma \ref{lem:stableopen}. So there is some $\delta_0<\delta$ such that $\varphi(x,y)$ is ${}^*\!(k,\delta_0)$-stable in $T$. By Proposition \ref{prop:twostables}, $\varphi(x,y)$ is $(\ell,\delta)$-stable in $T$ for some $\ell\geq 1$.
\end{proof}

The previous corollary also follows from \cite[Lemma 7.2]{BYU}, whose proof involves similar Ramsey arguments.

\end{appendices}

\end{document}